\documentclass[a4paper]{amsart}
\usepackage{amssymb
	,amsthm,dirtytalk
	,amsmath,color,todonotes
	,amscd,mathrsfs
	,mathtools,enumerate
}
 \usepackage{enumitem}

\usepackage{hyperref}
\usepackage[all]{xy}
\usepackage[top=30truemm,bottom=30truemm,left=25truemm,right=25truemm]{geometry}
\newcommand{\St}{\mathrm{St}}
\newcommand{\Sp}{\mathrm{Sp}}

\newcommand{\SL}{\mathrm{SL}}
\newcommand{\GL}{\mathrm{GL}}

\newcommand{\Ind}{\mathrm{Ind}}
\newcommand{\Res}{\mathrm{Res}}
\newcommand{\Hom}{\mathrm{Hom}}

\newcommand{\Ext}{\mathrm{Ext}}

\newcommand{\Nm}{\mathrm{Nm}}

\newtheorem{thm}{Theorem}[section]

\newtheorem{lem}[thm]{Lemma}
\newtheorem{prop}[thm]{Proposition}
\newtheorem{coro}[thm]{Corollary}

\theoremstyle{remark}

\theoremstyle{definition}

\newtheorem{rem}[thm]{Remark}

\numberwithin{equation}{section}

\makeatletter
\def\iddots{\mathinner{\mkern1mu\raise\p@
		\hbox{.}\mkern2mu\raise4\p@\hbox{.}\mkern2mu
		\raise7\p@\vbox{\kern7\p@\hbox{.}}\mkern1mu}}
\def\adots{\mathinner{\mkern2mu\raise\p@\hbox{.}
		\mkern2mu\raise4\p@\hbox{.}\mkern1mu
		\raise7\p@\vbox{\kern7\p@\hbox{.}}\mkern1mu}}
\makeatother

\allowdisplaybreaks
\title{The modulo distinction problem for $D^\times$}
\author{Junyan Liu}
\address{School of Mathematical Sciences, Beihang University, 9 Nansan Street, Shahe Higher Education Park, Changping, Beijing, 102206, P.R.C.}
\email{mayljy@buaa.edu.cn}
\author{Hengfei LU}

\email{luhengfei@buaa.edu.cn}

\date{\today}

\begin{document}
	
	\begin{abstract} Let $F$ be a nonarchimedean local field of characteristic zero and residue characteristic $p$. Let $E$ be a quadratic field extension of $F$. Let $D$ be the unique quaternion division algebra defined over $F$. Let $G=\GL_{2}(E)$  and $H=D^\times$.
		This paper studies the modulo distinction problem for the pair $(G,H)$ with $\ell\neq p$, which gives a complete classification of $H$-distinguished representations of $G$, including the cases $\ell=2$ and $p=2$. In terms of type theory, we give a complete classification of $H$-distinguished supercuspidal representations when $p$ is odd.
	\end{abstract}
	\keywords  {distinction problems, group cohomology, $\ell$-modular representation} 
	\subjclass[2020]{22E50}
	\maketitle
	\tableofcontents
	\section{Introduction}
Let \( F \) be a non-archimedean local field of characteristic zero and residue characteristic \( p \). Let \( E/F \) be a quadratic field extension. Let \( D \) be the unique quaternion division algebra over \( F \). There is a natural embedding \( D^\times \hookrightarrow \GL_2(E) \), so \( D^\times \) can be viewed as a closed subgroup of \( \GL_2(E) \). Let \( \ell \) be a prime different from \( p \). We say that an irreducible smooth \( \overline{\boldsymbol{F}_\ell} \)-representation \( \pi \) of \( \GL_2(E) \) is \( D^\times \)-distinguished, or simply distinguished if
\(\Hom_{D^\times}(\pi, \boldsymbol{1}) \neq 0,\) where \( \boldsymbol{1} \) denotes the trivial character of \( D^\times \).

Distinction problems are central objects in the relative Langlands program. Over the complex field, the study of distinction on the symmetric varieties has a rich history. Prasad's work \cite{prased1992} established a fundamental connection between \( D^\times \)-distinction and \( \GL_2(F) \)-distinction for supercuspidal representations. This theme has been explored further in various settings; for instance, the period problems of \( \GL_2 \) over quadratic extensions have been revisited using theta lifts and Whittaker model (see \cite{Lu_2017}). More recently, Choiy and Patel \cite{choiy2025} have investigated the analogous distinction problem for \( \SL_n(D) \) with respect to the subgroup \( \GL_n(E)\cap \SL_n(D) \).

In the modular setting, the theory of \( \ell \)-modular representations of \( p \)-adic groups has been developed systematically by Vign\'eras \cite{vig1989,vig1998} and distinction problems for modular representations have recently attracted considerable interest, such as \cite{vincent2019,zoujiandi22bullsmf,robinIMRN,vin2025,vin2026,zhangrobin26}. For the pair \( (\GL_2(E), \GL_2(F)) \), a complete classification of distinguished representations was obtained in \cite{cui2024}, but the analogous problem for the group \( D^\times \) was left open.

\subsection{}Let \(\chi_1,\chi_2\) be characters of \(E^\times\). For an irreducible principal series representation \(\pi(\chi_1, \chi_2)\) of \(\GL_2(E)\), a standard Mackey-theoretic argument using the double coset decomposition \(\mathrm{GL}_2(E) = B_2(E)D^\times\) shows that it is \(D^\times\)-distinguished if and only if
\(\chi_1\chi_2^\sigma = 1,\) and in that case the corresponding Hom space is one-dimensional (see Lemma \ref{3.2}). 

The more delicate part is to determine which of the irreducible subquotients of a reducible principal series are distinguished. This requires a detailed study of the relevant Ext groups. A key technical step is the computation of \(\Ext_{D^\times}^1(\boldsymbol{1}, \chi \circ \det)\) for characters \(\chi\) of \(F^\times\), which we carry out mainly via the Hochschild--Serre spectral sequence (see Lemma \ref{3.3}). This computation is particularly subtle when \(\chi\) is quadratic and \(\ell\) divides \(q+1\), and it is also responsible for the exceptional behavior in the cases \(p=2\) or \(\ell=2\). Based on this, we obtain the following complete classification for the non-supercuspidal subquotients distinguished by $D^\times$.

\begin{thm}[Theorem \ref{thm1}, Proposition \ref{pSp2} and Theorem \ref{thm41}]\label{thm:principal}
Suppose $\ell\neq p$. Let \(\chi\) be a character of \(E^\times\).\label{mainthm1.1}
\begin{enumerate}
\item If \(\ell \neq 2\) and \(q_E \not\equiv -1 \pmod{\ell}\), then the twisted Steinberg representation \(\mathrm{St}_\chi\) is \(D^\times\)-distinguished if and only if \(\chi|_{F^\times} = \omega_{E/F}\).

\item If \(\ell \neq 2\) and \(q_E \equiv -1 \pmod{\ell}\), then the special representation \(\mathrm{Sp}_\chi\) is \(D^\times\)-distinguished if and only if one of the following conditions holds:
    \begin{enumerate}
        \item $\chi|_{F^\times}=\omega_{E/F}$ and $E/F$ is ramified;
        \item $\chi|_{F^\times}$ is a nontrivial quadratic unramified character and $E/F$ is ramified.
    \end{enumerate}

\item If \(\ell = 2\), then $\Sp_\chi$ is distinguished by $D^\times$ if and only if $\chi|_{F^\times}=\boldsymbol{1}$, $E/F$ is ramified and $q\equiv 3\pmod 4$.
\end{enumerate}

Moreover, in all positive cases, the dimension of the corresponding \(\mathrm{Hom}\) space is one.
\end{thm}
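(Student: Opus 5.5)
We study the reducible principal series and its subquotients through the restriction to $D^\times$. Since $\GL_2(E)=B_2(E)D^\times$ is a single double coset and $B_2(E)\cap D^\times = E^\times$ (the centre-like torus; concretely, the stabiliser of a point of $\mathbb{P}^1(E)$ in $D^\times$ is $E^\times$ embedded in $D$), Mackey theory gives
\[
\pi(\chi_1,\chi_2)|_{D^\times}\cong \Ind_{E^\times}^{D^\times}\bigl(\chi_1\chi_2^{-1}\delta^{1/2}\bigr|_{E^\times}),
\]
which is compact induction from $E^\times$ with $D^\times/E^\times$ compact modulo centre. For $\chi_1=\chi\nu^{1/2},\chi_2=\chi\nu^{-1/2}$ we have the sequence $0\to \chi\circ\det\to \pi\to \St_\chi\to 0$ (or its analogue with $\Sp_\chi$, and with $\pi$ of length $3$ or non-split extensions when $q_E\equiv -1\pmod\ell$, or when $\ell=2$). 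Applying $\Hom_{D^\times}(-,\boldsymbol 1)$ gives
\[
0\to\Hom_{D^\times}(\St_\chi,\boldsymbol 1)\to\Hom_{D^\times}(\pi,\boldsymbol 1)\to\Hom_{D^\times}(\chi\circ\det,\boldsymbol 1)\to\Ext^1_{D^\times}(\St_\chi,\boldsymbol 1)\to \Ext^1_{D^\times}(\pi,\boldsymbol 1)\to\Ext^1_{D^\times}(\chi\circ\det,\boldsymbol 1).
\]
By Lemma \ref{3.2}, $\dim\Hom_{D^\times}(\pi,\boldsymbol 1)=1$ exactly when $\chi_1\chi_2^\sigma=1$. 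This amounts to $\chi|_{F^\times}$ being trivial up to the twist by $\nu^{\pm1/2}$ restricted to $F^\times$, i.e.\ to $\chi|_{F^\times}=\omega_{E/F}$-type conditions once the reduced norm is accounted for. The character $\chi\circ\det$ restricted to $D^\times$ is $\chi|_{F^\times}\circ\Nm$, so $\Hom_{D^\times}(\chi\circ\det,\boldsymbol 1)\neq 0$ iff $\chi|_{F^\times}\circ\Nm=1$. Since $\Nm(D^\times)=F^\times$, this holds iff $\chi|_{F^\times}=1$.

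In case (1), $\ell\neq 2$ and $q_E\not\equiv-1$, the principal series has length two and $\ell$ does not divide the pro-order of the relevant compact pieces' contribution. I therefore expect $\Ext^1_{D^\times}$ between characters to vanish, or at least the connecting map to be controlled. The sequence then splits the one-dimensional $\Hom(\pi,\boldsymbol 1)$ between $\St_\chi$ and $\chi\circ\det$, and we determine which piece gets it by comparing the central character and the condition from Lemma \ref{3.2}. I expect the answer $\chi|_{F^\times}=\omega_{E/F}$ for $\St_\chi$, matching the complex case; the sign comes from $\nu_E^{1/2}|_{F^\times}$ versus $\omega_{E/F}$, and one must also use the other exact sequence $0\to\St_\chi\to\pi(\chi\nu^{-1/2},\chi\nu^{1/2})\to\chi\circ\det\to0$.

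In cases (2) and (3), the reduction of $\pi$ is more complicated, with cuspidal-free but non-split pieces. Here $\Sp_\chi$ is the generic subquotient, and we must compute the connecting map into $\Ext^1_{D^\times}(\boldsymbol 1,\chi\circ\det)$, using Lemma \ref{3.3}. The plan is to write both $\Sp_\chi$-containing sequences (sub and quotient), apply $\Hom_{D^\times}(-,\boldsymbol 1)$, and use Lemma \ref{3.3}. That lemma says $\Ext^1$ is nonzero precisely for $\chi$ trivial, or quadratic with $\ell\mid q+1$ and the ramification conditions. The aim is to decide when the unique $D^\times$-invariant functional on $\pi$ does not factor through, respectively is not killed by, the one-dimensional constituents. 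The ramification of $E/F$ enters because $q_E\equiv-1\pmod\ell$ forces $E/F$ ramified when $q_E=q$, while $q_E=q^2\equiv -1$ is impossible for $\ell=2$ unless things degenerate. The condition $q\equiv 3\pmod 4$ for $\ell=2$ should come from the $2$-part of $q+1$ in the Hochschild--Serre computation of Lemma \ref{3.3}.

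The main obstacle is determining the connecting homomorphisms, i.e.\ whether the relevant extension classes in $\Ext^1_{D^\times}(\boldsymbol 1,\chi\circ\det)$ are nonzero and are realised by the restriction of $\pi$. For this I would first try to identify explicitly the restriction to $D^\times$ of the length-two sub/quotient of $\pi$ containing $\Sp_\chi$ and $\chi\circ\det$ via the Mackey model. Multiplicity one then follows from $\dim\Hom_{D^\times}(\pi,\boldsymbol 1)\le1$ together with left exactness.
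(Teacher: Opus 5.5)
Your outline sets up the right exact sequences, but the step that carries case (1) is false as stated, and the arguments that actually decide (2) and (3) are missing.

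\textbf{Case (1) and the Mackey step.} Your expectation that $\Ext^1_{D^\times}$ between characters vanishes is wrong. We have $\Ext^1_{D^\times}(\boldsymbol{1},\boldsymbol{1})=\Hom(D^\times,\overline{\boldsymbol{F}_\ell})\neq 0$ for every $\ell$, because of the valuation $D^\times\to\mathbb{Z}$. So when $\chi|_{F^\times}=\boldsymbol{1}$, the sequence over $D^\times$ only gives $\Hom_{D^\times}(\St_\chi,\boldsymbol{1})\hookrightarrow\Ext^1_{D^\times}(\boldsymbol{1},\boldsymbol{1})$, and you cannot conclude that $\St_\chi$ is not distinguished.

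The missing idea is this. When $\chi^2|_{F^\times}=\boldsymbol{1}$, the centre $F^\times$ acts trivially on $\pi$ and on all its constituents, so you can run the long exact sequence among smooth $PD^\times$-representations. There $PD^\times/U_1\cong D_{2(q+1)}$ with $U_1$ pro-$p$. Hence $\Ext^1_{PD^\times}(\boldsymbol{1},\boldsymbol{1})=\Hom(D_{2(q+1)}^{ab},\overline{\boldsymbol{F}_\ell})=0$ for $\ell\neq2$, and the quadratic case of Lemma \ref{3.3} becomes a computation over this dihedral quotient.

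Your Mackey formula also needs correcting. The restriction is $\Ind_{E^\times}^{D^\times}(\chi_1\chi_2^\sigma)$, where $E^\times=\{\mathrm{diag}(\alpha,\sigma(\alpha))\}$ and the modulus character is trivial on this torus. For $\chi\nu^{\pm1/2}$ the inducing character is exactly $\chi\chi^\sigma=\chi|_{F^\times}\circ\Nm_{E/F}$. So $\pi$ is distinguished iff $\chi|_{F^\times}\in\{\boldsymbol{1},\omega_{E/F}\}$, with no twist or sign coming from $\nu^{1/2}$.

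\textbf{Cases (2) and (3).} Tracking the invariant functional on $\pi$ cannot produce case (2)(b).
\begin{itemize}
\item \emph{Case (2)(b).} Here $E/F$ is ramified, so $\omega_{E/F}$ is ramified, $\chi|_{F^\times}\notin\{\boldsymbol{1},\omega_{E/F}\}$, and $\Hom_{D^\times}(\pi,\boldsymbol{1})=0$. The functional on $\Sp_\chi$ comes from $\Hom_{PD^\times}(J_\chi,\boldsymbol{1})\cong\Ext^1_{PD^\times}(\chi\circ\det,\boldsymbol{1})$, which is one-dimensional because $\ell\mid q+1$. This isomorphism needs $\Ext^1_{PD^\times}(\pi,\boldsymbol{1})=0$, which the paper proves in Lemma \ref{l4.2} via $\mathrm{H}^1(D_{2(q+1)},(\pi^{U_1})^\vee)$ and Shapiro's lemma. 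Your multiplicity-one argument via $\dim\Hom_{D^\times}(\pi,\boldsymbol{1})\le1$ therefore fails in this case.
\item \emph{Unramified $E/F$ with $\chi|_{F^\times}=\omega_{E/F}$.} Here $\pi$ is distinguished but $\Sp_\chi$ is not. Showing this requires the map $\Hom(J_\chi,\boldsymbol{1})\to\Hom((\chi\circ\det)\nu_2,\boldsymbol{1})$ to be surjective. The paper gets this from $\Ext^1_{PD^\times}(\Sp_\chi,\boldsymbol{1})=0$, by Maschke's theorem since $\ell\nmid 2(q+1)$.
\item \emph{$\ell=2$.} Lemma \ref{3.3} is unavailable, since it assumes $\ell\neq2$. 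Moreover no $\mathrm{H}^1$ computation detects $q\bmod 4$: $\mathrm{H}^1(D_{2(q+1)},\overline{\boldsymbol{F}_2})$ is two-dimensional for every odd $q$. The paper proceeds as follows.
  \begin{enumerate}
  \item It shows $\dim\Hom_{PD^\times}(M,\boldsymbol{1})=1$, by a dimension count and the surjectivity of $f_4$.
  \item In the ramified case it identifies $M^{U_1}$ with the augmentation kernel of the permutation module $\overline{\boldsymbol{F}_2}[D_{2(q+1)}/C_2]$.
  \item It exhibits the invariant form $\lambda(a)=\sum_i i\,a_i$ on this kernel.
  \item It evaluates $\lambda$ on $(1,\dots,1)$, the image of $\chi\circ\det$, obtaining $q(q+1)/2$, which is even iff $q\equiv3\pmod4$.
  \item In the unramified case $M^{U_1}$ is spanned by $(1,1)$, so $\Sp_\chi$ is never distinguished.
  \end{enumerate}
  Note that the unramified case is not ``impossible'' for $\ell=2$: $q_E\equiv-1\pmod 2$ always.
\end{itemize}
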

\begin{rem}
This theorem exhibits two phenomena specific to the modular setting.
\begin{itemize}
\item Theorem \ref{thm:principal} (2) condition (b) implies that an irreducible subquotient $\Sp_\chi$ of a principal series representation can be \(D^\times\)-distinguished even when the principal series itself is not. 
Thus distinction of the special representation cannot be detected only from invariant linear forms on the principal series. In our proof, this additional invariant linear form is accounted for by a nonzero extension group;
\item Theorem \ref{thm:principal} (3) shows that when \(\ell=2\), the distinction criterion detects the residue cardinality modulo \(4\). This dependence arises from the extension structure of a finite permutation representation of the dihedral group \(D_{2(q+1)}\).
\end{itemize}
\end{rem}

\subsection{}For supercuspidal representations, we reduce the \( D^\times \)-distinction problem to the more familiar \( \GL_2(F) \)-distinction problem. The key ingredient is a lifting theorem: if a supercuspidal \( \overline{\boldsymbol{F}_\ell} \)-representation is \( D^\times \)-distinguished, then it admits a \( D^\times \)-distinguished \( \overline{\mathbb{Q}_\ell} \)-lift. Combining this with Prasad's theorem \cite{prased1992} in the complex case yields the following result. 

\begin{thm}[Theorem \ref{c3.1}]\label{thm:equiv}
Let \( \pi \) be an irreducible supercuspidal representation of \( \mathrm{GL}_2(E) \). Then \( \pi \) is \( D^\times \)-distinguished if and only if \( \pi \) is \( \mathrm{GL}_2(F) \)-distinguished.
\end{thm}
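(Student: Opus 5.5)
Theorem \ref{thm:equiv} is an equivalence between two modular distinction properties, and the plan is to pass through characteristic zero in both directions. For complex (equivalently $\overline{\mathbb{Q}_\ell}$-) supercuspidal representations, Prasad's theorem \cite{prased1992} gives that $\tilde\pi$ is $D^\times$-distinguished iff it is $\GL_2(F)$-distinguished, at least when the central character restricted to $F^\times$ is trivial, which is forced by either distinction. So the task is to relate distinction of the $\overline{\boldsymbol{F}_\ell}$-representation $\pi$ to distinction of suitable $\overline{\mathbb{Q}_\ell}$-lifts, separately for $H=D^\times$ and for $H=\GL_2(F)$.

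\textbf{Step 1 ($D^\times$-distinction lifts).} Suppose $\pi$ is $D^\times$-distinguished. Since $\pi$ is supercuspidal, it is compactly induced, $\pi=\mathrm{c\text{-}Ind}_{J}^{G}\Lambda$, from an extended maximal simple type $(J,\Lambda)$. Choose a $\overline{\mathbb{Z}_\ell}$-lift $\tilde\Lambda$ of $\Lambda$, giving a supercuspidal lift $\tilde\pi$ of $\pi$ with the same type structure. Mackey theory gives
\[\Hom_{D^\times}(\pi,\boldsymbol{1})=\prod_{g\in J\backslash G/D^\times}\Hom_{J^g\cap D^\times}(\Lambda^g,\boldsymbol{1}).\]
The key point is that $D^\times$ is compact modulo $F^\times$. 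Hence each $J^g\cap D^\times$ is compact modulo a centre on which everything is controlled by the central character, and only finitely many double cosets contribute. We may twist so that the central character of $\tilde\pi$ is trivial on $F^\times$; this is possible because $\omega_\pi|_{F^\times}=1$ and characters lift compatibly.

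It then suffices to show that $\Hom_{K}(\Lambda^g,\boldsymbol{1})\neq0$ implies $\Hom_K(\tilde\Lambda^g,\boldsymbol 1)\neq0$ for the compact group $K=(J^g\cap D^\times)/F^\times$. If $K$ has pro-order prime to $\ell$, this is immediate because reduction is exact. In general I would use the $\overline{\mathbb{Z}_\ell}$-lattice $L$ in $\tilde\Lambda^g$. One has $\dim L_K\otimes\overline{\boldsymbol F_\ell}\ge \dim\Lambda^g_K$, but the rank of the coinvariants of $L$ can be smaller because of torsion. This is the main obstacle.

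To handle it, I would exploit the freedom in choosing the lift $\tilde\Lambda$: the $\beta$-extension and the level-zero cuspidal part can be modified by characters. I would choose this twist so that it is trivial on the relevant $K$, where $K$ meets $J$ in a pro-$p$ group times a torus. Alternatively, I would argue as in the paper's Ext computations: the obstruction lives in $H^1(K,\cdot)$ of a finite quotient, which has order prime to $\ell$ after removing the pro-$p$ part except for a cyclic factor of order dividing $q+1$. This factor would be treated with a character-twist argument.

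\textbf{Step 2 ($\GL_2(F)$-distinction lifts).} For $\GL_2(F)$-distinction I would use the classification of distinguished supercuspidals from \cite{cui2024} (or the argument of \cite{vincent2019}). This gives that a supercuspidal $\pi$ is $\GL_2(F)$-distinguished iff it has a distinguished lift. Equivalently, it is iff $\pi$ is $\sigma$-selfdual with the correct central character condition, which reduces mod $\ell$ from the characteristic-zero criterion.

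\textbf{Step 3 (combining).} Conversely, distinction of a lift implies distinction of $\pi$. For $D^\times$ this holds because a nonzero invariant form on $\tilde\pi$ restricts to a $D^\times$-invariant form on a lattice. Scaling this form to be primitive gives a nonzero form mod $\ell$, and since $\pi$ is irreducible the reduction of the lattice is $\pi$. Thus $\pi$ is $D^\times$-distinguished iff some lift is $D^\times$-distinguished, iff (by Prasad) some lift is $\GL_2(F)$-distinguished, iff $\pi$ is $\GL_2(F)$-distinguished. Here I would take care that the lift chosen in Step 1 can be matched with the lift in Step 2. Prasad's theorem applies to every lift with the right central character, so it suffices that the set of distinguished lifts is nonempty in each case.
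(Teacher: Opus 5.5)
Your architecture (lift to $\overline{\mathbb{Q}_\ell}$, apply Prasad's theorem, reduce modulo $\ell$) is the same as the paper's. However, the step that carries all the content is not proved: that a $D^\times$-distinguished supercuspidal $\pi$ admits a $D^\times$-distinguished supercuspidal $\overline{\mathbb{Q}_\ell}$-lift. After Mackey theory you need $\Hom_K(\Lambda^g,\boldsymbol{1})\neq0\Rightarrow\Hom_K(\tilde\Lambda^g,\boldsymbol{1})\neq0$. For a \emph{fixed} lift $\tilde\Lambda$ this need not hold once $\ell$ divides the order of the image of $K$ in $PD^\times/U_1\simeq D_{2(q+1)}$. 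For example, an $\ell$-power-order character of $K$ occurring in $\tilde\Lambda^g|_K$ reduces to the trivial character, giving a mod-$\ell$ invariant form that does not lift. You flag this yourself, but neither remedy is an argument. A twist by a character that is trivial on $K$ leaves $\Hom_K(\tilde\Lambda^g,\boldsymbol{1})$ unchanged, so it cannot help. Changing the level-zero cuspidal parameter by an $\ell$-power-order character is in general not a twist of $\tilde\Lambda$ by a character of $J$ at all. Its effect on $K$-invariants is exactly what would have to be computed, and nothing in the proposal computes it. The alternative (``$\mathrm{H}^1$ of a finite quotient plus a character-twist'') is likewise only a gesture.

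The missing idea is to treat all lifts at once. Let $P$ be a $\overline{\mathbb{Z}_\ell}$-projective envelope of $\Lambda$, for instance $\tilde\kappa\otimes P_\sigma$, where $\tilde\kappa$ lifts the $\beta$-extension, $P_\sigma$ is the projective envelope of the level-zero part $\sigma$, and the action of a uniformizer is fixed.
\begin{itemize}
\item $P$ stays projective on $K$, so the nonzero $K$-invariant map $P^g\to\Lambda^g\to\overline{\boldsymbol{F}_\ell}$ lifts to a nonzero $K$-invariant map $P^g\to\overline{\mathbb{Z}_\ell}$.
\item Hence some irreducible constituent $\tilde\Lambda'$ of $P\otimes\overline{\mathbb{Q}_\ell}$ carries a nonzero $K$-invariant form.
\item Because $\sigma$ is \emph{super}cuspidal, Brauer reciprocity forces every such constituent to reduce to $\Lambda$, so $\mathrm{c\text{-}Ind}_J^G\tilde\Lambda'$ is the required lift.
\end{itemize}
The paper does not redo this. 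It quotes the lifting theorem \cite[Theorem 3.4]{cui2024}, valid for an arbitrary closed subgroup $H$, so $D^\times$ and $\GL_2(F)$ are handled uniformly and your Step 2 is unnecessary. It uses \cite[Theorem 3.4]{KM20} for the reduction direction.

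Two smaller points. In Step 3 you may rescale $\lambda$ to be primitive only once $\lambda(L)$ is known to be finitely generated. That requires $\lambda$ to be supported on finitely many double cosets, which comes from finite-dimensionality of the $\Hom$ space, not from compactness of $D^\times$ modulo $F^\times$ (the set $J\backslash G/D^\times$ is infinite). Also, the ``equivalently'' in Step 2 is false for $\ell\neq2$: a $\sigma$-selfdual supercuspidal with $\omega_\pi|_{F^\times}=\boldsymbol{1}$ may be $(\GL_2(F),\omega_{E/F})$-distinguished instead of $\GL_2(F)$-distinguished (Theorem \ref{thm42}).
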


Thus, for supercuspidal representations, the \( D^\times \)-distinction problem is equivalent to the \( \mathrm{GL}_2(F) \)-distinction problem. For \( \ell \neq 2 \), the latter is governed by the following dichotomy \cite{cui2024}.

\begin{thm}[Theorem \ref{thm42}] Assume $\ell\neq2$.
Let \( \pi \) be a \( \sigma \)-selfdual supercuspidal \( \overline{\boldsymbol{F}_\ell} \)-representation of \( \mathrm{GL}_2(E) \). Then the following are equivalent:
\begin{enumerate}
\item \( \pi \) is \( \mathrm{GL}_2(F) \)-distinguished;
\item \( \pi \) is not \( (\mathrm{GL}_2(F), \omega_{E/F}) \)-distinguished;
\item the Langlands parameter \( \phi_\pi \) is conjugate-orthogonal.
\end{enumerate}
In particular, a \( \sigma \)-selfdual supercuspidal representation is either \( \mathrm{GL}_2(F) \)-distinguished or \( (\mathrm{GL}_2(F), \omega_{E/F}) \)-distinguished, but not both.
\end{thm}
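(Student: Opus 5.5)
The plan is to deduce the dichotomy from the mod-$\ell$ results for the pair $(\GL_2(E),\GL_2(F))$ in \cite{cui2024}, using the lifting principle behind Theorem \ref{thm:equiv}. The key steps, in order, are as follows.

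First I would record a precise form of the lifting statement for both characters $\boldsymbol{1}$ and $\omega_{E/F}$ of $\GL_2(F)$. Fix a supercuspidal $\overline{\boldsymbol{F}_\ell}$-representation $\pi$. Since $\ell\neq 2$, we have $\omega_{E/F}\neq\boldsymbol{1}$ modulo $\ell$, and $\omega_{E/F}$ lifts uniquely to an $\overline{\mathbb{Q}_\ell}$-character. The claim is then:
\begin{itemize}
\item $\pi$ is $(\GL_2(F),\chi)$-distinguished, for $\chi\in\{\boldsymbol{1},\omega_{E/F}\}$, if and only if some $\overline{\mathbb{Q}_\ell}$-lift $\tilde\pi$ is $(\GL_2(F),\tilde\chi)$-distinguished;
\item such a lift can be chosen $\sigma$-selfdual when $\pi$ is.
\end{itemize}

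To prove this, I would write $\pi=\mathrm{c\text{-}Ind}_{\boldsymbol{J}}^{G}\boldsymbol{\lambda}$ via type theory, or via compact induction from a lattice when $p=2$, and apply Mackey theory. This reduces $\Hom_{\GL_2(F)}(\pi,\chi)$ to a sum over double cosets $\boldsymbol{J}g\GL_2(F)$ of $\Hom_{\boldsymbol{J}^g\cap\GL_2(F)}(\boldsymbol{\lambda}^g,\chi)$. These groups are compact modulo center, and their pro-orders away from the center are prime to $\ell$ apart from a finite quotient. One then lifts $\boldsymbol\lambda$ to $\tilde{\boldsymbol\lambda}$ with matching restrictions.

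Second, I would use the characteristic-zero dichotomy. Over $\overline{\mathbb{Q}_\ell}\cong\mathbb{C}$, a $\sigma$-selfdual supercuspidal $\tilde\pi$ is either $\GL_2(F)$-distinguished or $(\GL_2(F),\omega_{E/F})$-distinguished, but not both. This follows from Flicker, Kable and Anandavardhanan–Kable–Tandon, equivalently via Asai $L$-functions. In the first case $\phi_{\tilde\pi}$ is conjugate-orthogonal, and in the second it is conjugate-symplectic.

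Reduction mod $\ell$ then gives the statement. Because $\pi$ is $\sigma$-selfdual, its lifts form a finite set stable under $\tilde\pi\mapsto(\tilde\pi^\sigma)^\vee$ up to twists by characters trivial mod $\ell$. Choosing a $\sigma$-selfdual lift, it is distinguished by exactly one of $\boldsymbol{1}$ or $\omega_{E/F}$. Its reduction is then distinguished by the corresponding character, since an integral nonzero invariant functional reduces to a nonzero functional by suitable normalisation on $\boldsymbol\lambda$.

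For the implication (2)$\Rightarrow$(1), I would show that $\pi$ cannot fail both. It cannot be distinguished by neither, since any $\sigma$-selfdual lift is distinguished by one of the two characters. It cannot be distinguished by both: if it were, the lifting claim would give two $\sigma$-selfdual lifts $\tilde\pi_1,\tilde\pi_2$, one of each type. The lifts differ by a twist $\tilde\pi_2\cong\tilde\pi_1\otimes\mu$ with $\mu$ of $\ell$-power order and $\mu^\sigma\mu^{-1}$ trivial on the relevant part. The parity (orthogonal versus symplectic) of $\phi\otimes\mu$ agrees with that of $\phi$ when $\mu$ has odd order $\ell^k$: one can take a square root $\mu=\nu^2$ and write $\phi\otimes\mu=(\phi\otimes\nu)\otimes\nu$, with $\nu$ conjugate-orthogonal, so the sign of $\phi\otimes\mu$ equals that of $\phi$. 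This is a contradiction.

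Finally, for (3), I define $\phi_\pi$ as the reduction of $\phi_{\tilde\pi}$ for a $\sigma$-selfdual lift. By the twisting argument above it is well defined, and conjugate-orthogonality is preserved. Since $\ell\neq2$, the reduction of the form is nondegenerate and its sign is preserved, which gives (1)$\Leftrightarrow$(3).

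I expect the main obstacle to be the lifting step when $p=2$ and $\ell\mid q+1$. There the intersections $\boldsymbol{J}^g\cap\GL_2(F)$ may have order divisible by $\ell$, so invariants need not lift. I would handle this by showing that the relevant Hom space is at most one-dimensional, and that the $\ell$-part acts through a cyclic group whose invariants commute with reduction when $\ell$ is odd.
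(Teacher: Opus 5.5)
The paper's proof of this statement is just the citation \cite[Theorem 3.14 and Proposition 3.15]{cui2024}. Your route (characteristic-zero dichotomy, then lifting and reduction) is a reasonable architecture. However, the step that carries all the weight is not established, and the mechanism you propose for it fails. That step is: a $\GL_2(F)$- or $(\GL_2(F),\omega_{E/F})$-distinguished supercuspidal $\pi$ has a lift distinguished by the same character.

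Matching $\Hom_{\boldsymbol J^g\cap\GL_2(F)}(\boldsymbol\lambda^g,\chi)$ with its characteristic-zero analogue breaks down whenever $\ell$ divides the order of the finite reductive quotient of $\boldsymbol J^g\cap\GL_2(F)$. This is not confined to $p=2$, $\ell\mid q+1$. For depth-zero $\pi$ with $p$ odd, $\boldsymbol J\cap\GL_2(F)$ contains $\GL_2(\mathcal O_F)$, with quotient $\GL_2(\mathbb F_q)$ of order $q(q-1)^2(q+1)$.

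Your proposed remedy is false for every $\ell$. A nontrivial character of $C_\ell$ has no invariants over $\overline{\mathbb Q_\ell}$ but reduces to the trivial character, so invariants of a cyclic $\ell$-group do not commute with reduction. What is needed is, e.g., a projective-envelope argument. Lift the projective cover of the type to a projective $\overline{\mathbb Z_\ell}$-representation. Its restriction to the closed subgroup $\boldsymbol J^g\cap\GL_2(F)$ remains projective, so a nonzero mod-$\ell$ invariant functional lifts and is nonzero on some lift of $\boldsymbol\lambda$. This yields \emph{some} distinguished lift, which is exactly the input the paper imports as \cite[Theorem 3.4]{cui2024} in Theorem \ref{c3.1}. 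The $\omega_{E/F}$ case reduces to it by twisting with an extension of $\omega_{E/F}$ to $E^\times$. Such lifts are automatically $\sigma$-selfdual in characteristic zero.

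Two further steps are unjustified.

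For ``at least one of the two'' you need a $\sigma$-selfdual lift of $\pi$, and your reasoning does not produce one. The set of lifts is infinite (twists by unramified characters trivial mod $\ell$). Even with the central character fixed, an involution of a finite set need not have a fixed point. A genuine argument is required. For example, on the Galois side for $p$ odd, $\bar\phi_\pi$ is induced from a character and has finite image of order prime to the odd prime $\ell$. Its Brauer lift is then unique, hence conjugate-selfdual. For $p=2$, $\ell=3$ and primitive $\bar\phi_\pi$ this shortcut is unavailable.

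For ``not both'', the assertion that two lifts differ by a twist $\mu\circ\det$ is false. Suppose $\bar\phi_\pi=\Ind_{W_K}^{W_E}\bar\xi$, and let $\eta$ be a character of $K^\times$ of $\ell$-power order with $\eta^\tau\neq\eta$, where $\tau$ generates $\mathrm{Gal}(K/E)$. Then $\Ind(\tilde\xi\eta)$ is a lift which in general is not a twist of $\Ind\tilde\xi$.

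That step is repaired by your own last paragraph. Take the conjugate-duality pairing on an irreducible lift and normalise it to be primitive on a stable lattice. It reduces to a nonzero, hence nondegenerate, pairing on $\bar\phi_\pi$ with the same sign, since $\ell\neq2$. As $\Hom(\bar\phi_\pi^\sigma,\bar\phi_\pi^\vee)$ is one-dimensional, every $\sigma$-selfdual lift has the sign of $\bar\phi_\pi$, and no twisting is needed.

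Once the lifting theorem and the existence of $\sigma$-selfdual lifts are supplied, your outline closes. As written, those two inputs are the missing ideas.
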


For the classification of supercuspidal representations, we may rely on the theory of types, which provides an explicit construction of these representations via compact induction. In the complex case, this theory was developed by Bushnell  for \( \GL_n(F) \) (see \cite{B.K1993,B.H.2006}). This framework has been successfully extended to the \( \ell \)-modular setting. Using the type-theoretic construction of S\'echerre \cite{vincent2019,vin2025,vin2026}, one can make the criterion for the distinction completely explicit. Attached to a \( \sigma \)-selfdual supercuspidal representation \( \pi \) is its distinction character \( \delta_0 \), which is either trivial or, when \( \ell \neq 2 \), the quadratic character \( \omega_{\mathcal D/\mathcal D_0} \) associated to a certain quadratic extension \( \mathcal D/\mathcal D_0 \) arising from a \( \sigma \)-selfdual stratum.

\begin{thm}[Theorem \ref{sccriterion}]
Let $\pi$ be a $\sigma$-selfdual supercuspidal representation of $\GL_2(E)$,
with relative degree $m \in \{1,2\}$ and distinction character $\delta_0$.
\begin{enumerate}
\item If $\ell = 2$, then $\pi$ is always $D^\times$-distinguished.
\item If $\ell \neq 2$ and $p$ is odd, then $\pi$ is $D^\times$-distinguished if and only if 
\begin{align*}
    \begin{cases}
    \delta_0\ \mathrm{is\ the\ trivial\ character},&\mbox{when}\ m=1;\\\
    \delta_0=\omega_{\mathcal{D}/\mathcal{D}_0},&\mbox{when}\ m=2.
\end{cases}
\end{align*}
\end{enumerate}
\end{thm}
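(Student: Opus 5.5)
By Theorem \ref{thm:equiv}, $D^\times$-distinction of the supercuspidal $\pi$ is equivalent to $\GL_2(F)$-distinction. So the plan is to translate the known $\GL_2(F)$-distinction criterion for $\sigma$-selfdual supercuspidals of $\GL_2(E)$, in the type-theoretic language of S\'echerre \cite{vincent2019,vin2025,vin2026}, into the stated conditions on $m$ and $\delta_0$. We first note that $\GL_2(F)$-distinguished supercuspidals are $\sigma$-selfdual, so the hypothesis loses nothing. Next we fix a $\sigma$-selfdual extended maximal simple type $(\boldsymbol J,\boldsymbol\Lambda)$ with $\pi=\mathrm{c\text{-}Ind}_{\boldsymbol J}^{G}\boldsymbol\Lambda$, where $\boldsymbol J^\sigma=\boldsymbol J$ and $\boldsymbol\Lambda^{\sigma\vee}\simeq\boldsymbol\Lambda$. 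By S\'echerre's theory, there is a unique such type up to $\GL_2(F)$-conjugacy for which the $\GL_2(F)$-distinction of $\pi$ reduces to $\Hom_{\boldsymbol J\cap\GL_2(F)}(\boldsymbol\Lambda,\boldsymbol 1)\neq0$. We write $\boldsymbol\Lambda=\boldsymbol\kappa\otimes\boldsymbol\rho$, with $\boldsymbol\kappa$ chosen $\sigma$-selfdual and $\boldsymbol\kappa|_{\boldsymbol J\cap \GL_2(F)}$ distinguished. Then distinction of $\pi$ becomes the condition $\Hom_{\boldsymbol J\cap\GL_2(F)}(\boldsymbol\rho,\delta_0)\neq0$, where $\delta_0$ is the distinction character of $\boldsymbol\kappa$. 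The result we would invoke is this criterion: $\pi$ is distinguished if and only if the level-zero part $\boldsymbol\rho$, viewed as a representation of $\GL_{m'}$ over a finite field $\boldsymbol l$, is $\delta_0$-distinguished.

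We then split by the relative degree $m\in\{1,2\}$. If $m=1$, the finite part is a character of $\boldsymbol l^\times$ (the tame extension attached to the stratum has full degree). The involution acts on it, and $\sigma$-selfduality forces $\boldsymbol\rho$ to be trivial on the fixed-point subgroup. So distinction holds exactly when $\delta_0$ restricted to that subgroup is trivial, i.e.\ when $\delta_0=\boldsymbol 1$.

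If $m=2$, $\boldsymbol\rho$ is a cuspidal representation of $\GL_2(\boldsymbol l)$ and the relevant subgroup is a non-split torus or a unitary-type subgroup. The finite-field dichotomy, which is the finite analogue of the theorem on conjugate-orthogonal parameters cited from \cite{cui2024}, says that a $\sigma$-selfdual cuspidal is distinguished by the character $\omega_{\mathcal D/\mathcal D_0}$ and not by the trivial character. This gives the condition $\delta_0=\omega_{\mathcal D/\mathcal D_0}$. Here $p$ odd is used, since the character analysis of $\boldsymbol\kappa$ (Glauberman correspondence and sign computations) needs it.

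For $\ell=2$, the character $\omega_{\mathcal D/\mathcal D_0}$ is trivial mod $2$, so $\delta_0$ is always trivial. The finite-level condition then always holds: in characteristic $2$ every $\sigma$-selfdual cuspidal is distinguished, the two twists coinciding. Alternatively, Theorem \ref{thm42}'s dichotomy degenerates because $\omega_{E/F}\equiv\boldsymbol1$ mod $2$. Either way, $\pi$ is $\GL_2(F)$-distinguished, hence $D^\times$-distinguished by Theorem \ref{thm:equiv}.

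The main obstacle is the $m=2$ case. There one must pin down precisely which character, trivial or $\omega_{\mathcal D/\mathcal D_0}$, distinguishes the depth-zero cuspidal of $\GL_2(\boldsymbol l)$ in the modular setting, and check that the normalisation of $\delta_0$ in \cite{vin2025} matches. We would first try to verify this through the explicit Deligne--Lusztig (Green) character of the cuspidal representation restricted to the fixed subgroup. We would then lift to $\overline{\mathbb Q_\ell}$ and reduce, using that for $\ell\neq2$ multiplicities are preserved under reduction on the pro-$p$ part.
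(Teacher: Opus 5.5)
Your first step is the same as the paper's: Theorem \ref{c3.1} reduces $D^\times$-distinction to $\GL_2(F)$-distinction. The gap is in what comes after. The paper does not re-derive S\'echerre's criterion at the finite level; it applies his final results directly. For $\ell=2$ it cites \cite[Theorem 10.8]{vincent2019}. For $\ell\neq2$ it selects the relevant case of \cite[Theorem 10.9]{vincent2019} using three inputs: \cite[Proposition 9.8]{vincent2019} ($T/T_0$ unramified forces $m$ odd, so $m=2$ means $T/T_0$ ramified), \cite[Remark 10.10]{vincent2019} (then $\mathcal D/\mathcal D_0$ is unramified), and, for $m=1$, the fact that $T/T_0$ and $\mathcal D/\mathcal D_0$ have the same ramification. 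Your substitute for this step does not go through. To begin with, the $\delta_0$ of the statement is the quadratic character of $\mathcal D_0^\times$ attached to $\pi$ in \cite[Definition 10.4]{vincent2019}. It is not a character of $\boldsymbol J\cap\GL_2(F)$ by which $\boldsymbol\kappa$ is distinguished, and you never relate the two; that relation also involves $\boldsymbol\rho$.

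Your finite-field steps are also wrong as stated.

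\emph{Case $m=1$.} If $T/T_0$ is ramified, $\sigma$ acts trivially on $\boldsymbol l$. Then $\sigma$-selfduality only gives $\boldsymbol\rho^2=\boldsymbol{1}$ on $\boldsymbol l^\times$, not $\boldsymbol\rho=\boldsymbol{1}$ on the fixed points. If $T/T_0$, hence $\mathcal D/\mathcal D_0$, is unramified, then $\omega_{\mathcal D/\mathcal D_0}$ is trivial on units. So a condition on a finite fixed-point group cannot separate $\delta_0=\boldsymbol{1}$ from $\delta_0=\omega_{\mathcal D/\mathcal D_0}$.

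\emph{Case $m=2$.} Here $\mathcal D/\mathcal D_0$ is unramified, so the same invisibility occurs. Moreover, the finite-field dichotomy you invoke is false for $\GL_2(\boldsymbol l)$ alone. Take $Q=|\boldsymbol l|$, $\boldsymbol l_2/\boldsymbol l$ quadratic, and a selfdual cuspidal $\rho_\theta$ (so $\theta^{Q+1}=1$, $\theta^Q\neq\theta$). Its restriction to the elliptic torus $\boldsymbol l_2^\times$ contains both the trivial and the quadratic character, already over $\mathbb C$; this follows from the character value $-(\theta(x)+\theta(x^Q))$ on elliptic $x$. Whatever decides between $\boldsymbol{1}$ and $\omega_{\mathcal D/\mathcal D_0}$ lives on elements of $\boldsymbol J\cap\GL_2(F)$ outside the compact part. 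That is exactly the point you leave open as ``the main obstacle''.

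\emph{Case $\ell=2$.} You cannot appeal to Theorem \ref{thm42}, which assumes $\ell\neq2$; a dichotomy that ``degenerates'' proves nothing. That every $\sigma$-selfdual supercuspidal is distinguished when $\ell=2$ is precisely \cite[Theorem 10.8]{vincent2019}. To get it from ``the two twists coinciding'' you would need three things: a $\sigma$-selfdual supercuspidal $\overline{\mathbb Q_2}$-lift, the characteristic-zero dichotomy, and the reduction result of \cite{KM20}. The existence of such a lift is an input you do not supply.
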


\subsection{}Finally, we complete the classification of \( \GL_2(F) \)-distinguished principal series representations of \( \GL_2(E) \) in the case \( \ell = 2 \), which was left open in \cite{cui2024}.  

\begin{thm}[Proposition \ref{pil=2} and Theorem \ref{spl=2}]\label{thm:GL2}
Assume \( \ell = 2 \).
\begin{enumerate}
\item A principal series representation \( \pi(\chi_1, \chi_2) \) of \( \GL_2(E) \) is \( \GL_2(F) \)-distinguished if and only if  
\(
\chi_1 \chi_2^\sigma = \boldsymbol{1}
\)
or  
\(
\chi_1 |_{F^\times} = \chi_2 |_{F^\times} = \boldsymbol{1} \ \text{with} \ \chi_1 \neq \chi_2.
\)
\item The special representation \( \Sp_\chi \) is \( \GL_2(F) \)-distinguished if and only if \( \chi|_{F^\times} = \boldsymbol{1} \).
\end{enumerate}
\end{thm}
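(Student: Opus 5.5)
For part (1) I would run the Mackey argument of Lemma \ref{3.2}, now for $H=\GL_2(F)$ in place of $D^\times$. The double coset space $B_2(E)\backslash\GL_2(E)/\GL_2(F)$ is $\mathbb{P}^1(E)$ modulo $\GL_2(F)$. It has exactly two orbits: the closed orbit $\mathbb{P}^1(F)$ and the open orbit $\mathbb{P}^1(E)\setminus\mathbb{P}^1(F)$. On restriction to $\GL_2(F)$ this gives a short exact sequence
\[
0\to \mathrm{ind}_{T_\sigma}^{\GL_2(F)}(\chi_1\otimes\chi_2)\to \pi(\chi_1,\chi_2)|_{\GL_2(F)}\to \Ind_{B_2(F)}^{\GL_2(F)}\bigl((\chi_1\otimes\chi_2)\delta^{1/2}|_{B_2(F)}\bigr)\to 0 .
\]
Here $T_\sigma\cong E^\times$ is the stabilizer of a point of the open orbit; it acts through the character $\chi_1\chi_2^\sigma$, up to a harmless conjugation. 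The open orbit contributes a $\GL_2(F)$-invariant functional iff $\chi_1\chi_2^\sigma=\boldsymbol{1}$, by Frobenius reciprocity for compact induction. The closed orbit contributes iff $\chi_1|_{F^\times}\otimes\chi_2|_{F^\times}$, after the modulus twist, is such that the trivial character appears as a quotient of the $\GL_2(F)$-principal series. Since $\ell=2$, every modulus character $|\cdot|^{1/2}$ is trivial mod $2$ in the relevant sense, because $q^{1/2}$ squares to $q\equiv1$ and $-1=1$. So the closed orbit contributes iff $\chi_1|_{F^\times}=\chi_2|_{F^\times}=\boldsymbol{1}$: then $\boldsymbol{1}$ is a quotient of $\Ind_{B_2(F)}^{\GL_2(F)}\boldsymbol{1}$.

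The delicate point is gluing. Suppose only the closed orbit gives a functional; one must check that it extends through the sequence. The obstruction lies in $\Ext^1_{\GL_2(F)}(\mathrm{ind}_{T_\sigma}(\chi_1\chi_2^\sigma),\boldsymbol{1})$, which equals $\Ext^1_{E^\times}(\chi_1\chi_2^\sigma,\boldsymbol{1})$ after restriction to $E^\times$. When $\chi_1\chi_2^\sigma\neq\boldsymbol{1}$ this vanishes: the group $E^\times/F^\times$ is compact, its pro-$p$ part has no higher cohomology, and the remaining discrete/finite quotient has cohomology killed by a nontrivial character. So the functional extends. The condition $\chi_1\neq\chi_2$ is forced by irreducibility; if $\chi_1=\chi_2$ is trivial on $F^\times$, the case is either covered by $\chi_1\chi_2^\sigma=\boldsymbol{1}$ or not irreducible. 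Conversely, if neither condition holds, neither the sub nor the quotient has invariants, so $\pi$ has none. This yields (1).

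For part (2), write $\Sp_\chi$ as a subquotient of $\pi(\chi\nu^{-1/2},\chi\nu^{1/2})$. Since $\ell=2$ we have $q_E\equiv1\pmod 2$, so the reducible series contains $\chi\circ\det$ and $\Sp_\chi$, in one order or the other. Apply $\Hom_{\GL_2(F)}(-,\boldsymbol{1})$ to
\[
0\to\Sp_\chi\to\pi\to\chi\circ\det\to0 .
\]
This gives
\[
0\to\Hom(\chi\circ\det,\boldsymbol{1})\to\Hom(\pi,\boldsymbol{1})\to\Hom(\Sp_\chi,\boldsymbol{1})\to\Ext^1_{\GL_2(F)}(\chi\circ\det,\boldsymbol{1})\to\Ext^1(\pi,\boldsymbol{1}).
\]
Part (1) computes the middle terms, using that $\chi^{\sigma}\chi$ restricted appropriately relates to $\chi|_{F^\times}$. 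The first term is nonzero iff $\chi|_{F^\times}\circ\det$ is trivial on $\GL_2(F)$, i.e.\ $\chi|_{F^\times}=\boldsymbol{1}$.

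For sufficiency when $\chi|_{F^\times}=\boldsymbol{1}$, I expect $\dim\Hom(\pi,\boldsymbol{1})=2$, coming from both orbits of part (1). This forces $\Hom(\Sp_\chi,\boldsymbol{1})\neq0$. For necessity when $\chi|_{F^\times}\neq\boldsymbol{1}$, I must show that $\Ext^1_{\GL_2(F)}(\chi\circ\det,\boldsymbol{1})\to\Ext^1(\pi,\boldsymbol{1})$ is injective, or that the Ext group vanishes. I would compute $\Ext^1_{\GL_2(F)}(\boldsymbol{1},\chi^{-1}|_{F^\times}\circ\det)$ by Hochschild--Serre through $\SL_2(F)\to\GL_2(F)\to F^\times$, in the style of Lemma \ref{3.3}. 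The term $H^1(\SL_2(F),\boldsymbol{1})$ vanishes because $\SL_2(F)$ is perfect and generated by unipotents. Hence the group reduces to $H^1(F^\times,\chi^{-1}|_{F^\times})$, which is zero for a nontrivial character. This vanishing step, together with the care needed over which order the sequence takes mod $2$, is where I expect the main obstacle to lie.
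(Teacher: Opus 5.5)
\textbf{Part (1).} You have the gluing step backwards. In your sequence $0\to\mathrm{ind}_{T_\sigma}^{\GL_2(F)}(\chi_1\chi_2^\sigma)\to\pi\to\Ind_{B_2(F)}^{\GL_2(F)}(\cdots)\to0$ the closed orbit is the \emph{quotient}. A functional coming from it therefore simply pulls back to $\pi$, and nothing needs checking. The group $\Ext^1_{\GL_2(F)}(\mathrm{ind}_{T_\sigma}^{\GL_2(F)}(\chi_1\chi_2^\sigma),\boldsymbol{1})$ appears only after $\Ext^1_{\GL_2(F)}(\pi,\boldsymbol{1})$ in the long exact sequence, so it has no bearing on $\Hom_{\GL_2(F)}(\pi,\boldsymbol{1})$.

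The case that genuinely needs an argument is $\chi_1\chi_2^\sigma=\boldsymbol{1}$ with $\chi_1|_{F^\times}\neq\boldsymbol{1}$, hence also $\chi_2|_{F^\times}=(\chi_1|_{F^\times})^{-1}\neq\boldsymbol{1}$. There the closed orbit contributes nothing, and the only invariant functional lives on the open-orbit \emph{sub}. Its extension to $\pi$ is obstructed by $\Ext^1_{\GL_2(F)}$ of the closed-orbit quotient, which the paper identifies with $\Ext^1_{F^\times\times F^\times}(\chi_1\boxtimes\chi_2,\boldsymbol{1})$ in \eqref{f51}. The missing ingredient is Lemma \ref{l51}: by K\"unneth, this group vanishes as soon as one $\chi_i|_{F^\times}$ is nontrivial. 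Without it, the ``if'' direction of (1) under the condition $\chi_1\chi_2^\sigma=\boldsymbol{1}$ is not proved.

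\textbf{Part (2).} The sequence $0\to\Sp_\chi\to\pi\to\chi\circ\det\to0$ does not exist for $\ell=2$. Here $\nu$ is trivial and $\pi(\chi,\chi)$ has length three. The kernel of $\pi(\chi,\chi)\to\chi\circ\det$ is the module $M$ of \eqref{expi}, which contains a second copy of $\chi\circ\det$ with quotient $\Sp_\chi$ (see \eqref{f41}). So your long exact sequence is invalid. Necessity is immediate anyway from the central character: $\chi^2|_{F^\times}=\boldsymbol{1}$ forces $\chi|_{F^\times}=\boldsymbol{1}$ in characteristic $2$.

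Sufficiency is where the content lies, and your argument does not reach it. First, $\dim\Hom_{\GL_2(F)}(\pi,\boldsymbol{1})=2$ is only expected. By \eqref{f51} it requires the connecting map $\Hom_{E^\times}(\boldsymbol{1},\boldsymbol{1})\to\Ext^1_{F^\times\times F^\times}(\boldsymbol{1},\boldsymbol{1})$ to vanish, and the target is $4$-dimensional. Second, even granting this, the correct length-three structure still leaves a question: does an invariant functional on $\pi$ that does not factor through the top copy of $\chi\circ\det$ vanish on the bottom copy? That is the analogue of the map $f_5$ of Lemma \ref{f5zero}, which for $D^\times$ depended on ramification and on $q\bmod 4$.

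The paper bypasses this computation entirely. Since $\Sp_\chi$ is cuspidal, S\'echerre's theorem \cite[Theorem 1.3]{vin2026} says it is $\GL_2(F)$-distinguished iff it is $\sigma$-selfdual. $\sigma$-selfduality is then read off from $\Sp_\chi^\vee\simeq\Sp_{\chi^{-1}}$ and $\Sp_\chi^\sigma\simeq\Sp_{\chi^\sigma}$.
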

\begin{rem}
The comparison in Theorem \ref{thm:equiv} is specific to supercuspidal representations and does not extend to all cuspidal representations. For example, suppose that \(\ell=2\) and that \(E/F\) is unramified. The special representation \(\mathrm{Sp}_\chi\), $\chi|_{F^\times}=\boldsymbol{1}$ is cuspidal but not supercuspidal, and Theorems \ref{thm:principal} and \ref{thm:GL2} give
\[
\operatorname{Hom}_{D^\times}(\mathrm{Sp}_\chi,\mathbf1)=0,
\qquad
\operatorname{Hom}_{\mathrm{GL}_2(F)}(\mathrm{Sp}_\chi,\mathbf1)\ne0.
\]
\end{rem}

The paper is organized as follows. Section 2 sets up the notation. In Section 3 we collect the cohomological preliminaries needed for the Ext-group computations. Section 4 is devoted to the classification of \( D^\times \)-distinguished principal series representations of $\GL_2(E)$ and their subquotients, which is the most technique part of the whole paper, involving a lot of explicit computation for the group homology. In Section 5 we treat the supercuspidal case and prove the reduction to \( \mathrm{GL}_2(F) \)-distinction. Finally, Section 6 completes the classification of \( \mathrm{GL}_2(F) \)-distinguished principal series when \( \ell = 2 \).

\subsection*{Acknowlegement} The second named author  was partially supported by NSFC 12301031.

    \section{Notation}
    Let $F$ be a locally compact non-Archimedean local field of characteristic zero and residual characteristic $p$. The residue field of $F$ is $\mathbb{F}_q$, where $q$ is the power of $p$. Denote by $D$ the unique 4-dimensional quaternion division algebra defined over F. The elemets in $D$ are in the form of $x+yi+zj+wk$ with $i^2=a\in F^\times,j^2=b\in F^\times,ij=-ji=k$ and $x,y,z,w\in F$. Let $E=F[i]$, then $E/F$ is a quadratic field extension. Denote by $q_F=q$ (resp. $q_E$) the cardinality of the residue field of $F$ (resp. $E$). Let $\sigma$ be the non-trivial $F$-automorphism of $E$.

    Denote by $\omega_{E/F}$ the quadratic character of $F^\times$ associated to the quadratic ﬁeld extension $E/F$ by the local class ﬁeld theory. Let Nm$_{E/F}$ (resp. Nrd$_{D/F}$) be the norm (resp. reduced norm) map of $E/F$ (resp. $D/F$).

    We denote by GL$_2$ the general linear group ,$B_2$ the standard Borel sungroup of GL$_2$.There exists an embedding
    \begin{align*}
        D^\times=\left\{\begin{pmatrix}
            \alpha&b\beta\\
            \sigma(\beta)&\sigma(\alpha)
        \end{pmatrix}|\alpha,\beta\in E,\Nm_{E/F}(\alpha)-b\Nm_{E/F}(\beta)\neq0\right\}\hookrightarrow \GL_2(E)
    \end{align*}
    For a character $\chi$ of $E^\times$,we will also denote by $\chi$ the character $\chi\circ \mathrm{det}$ of $\GL_2(E)$.The trivial charater will be denoted by $\boldsymbol{1}$. Let $\nu_2$ be the character $g\mapsto|\det(g)|$ of $\GL_2(E)$.

Let $G$ be a locally profinite group and let $A$ be a $G$-module equipped with a continuous left action of $G$ by automorphisms, denoted by $g \cdot a$ for $g \in G$ and $a \in A$. The first cohomology group is defined as
\[
\mathrm{H}^1(G,A) = \frac{
  \left\{ f: G \to A \;\middle|\; f(gh) = f(g) + g \cdot f(h),\ \forall g,h \in G, \mathrm{and}\ f\ \mathrm{is\ continuous} \right\}
}{
  \left\{ f:G\to A \;\middle|\; \exists a \in A \mbox{ such that } \ f(g) = g \cdot a - a,\ \forall g \in G \right\}
}
\] If $A$ is a trivial $G$-module, then
\[
\mathrm{H}^1(G,A) = \Hom_{cts}(G,A)
\]
where $\Hom_{cts}(G,A)$ denotes the group of continuous homomorphisms from $G$ to the additive group $A$.

For a closed normal subgroup $N$ in $G$, $G$ acts on $\mathrm{H}^1(N,A)$ by
\[
(g \cdot f)(n) = g \cdot f(g^{-1} n g), \quad n \in N,\ g \in G
\]
for a cocycle $f: N \to A$, this action factors through the quotient $G/N$. Moreover, 
define $$A^N := \{ a \in A \mid n\cdot a = a,\ \forall n \in N \}.$$
By \cite[Theorem III.2]{HochschildSerre1953}, the corresponding five-term exact sequence arising from the Hochschild--Serre spectral sequence is:
\[
0 \to \mathrm{H}^1(G/N, A^N) \xrightarrow{\mathrm{inf}} \mathrm{H}^1(G, A) \xrightarrow{\mathrm{res}} \mathrm{H}^1(N, A)^{G/N} \xrightarrow{d} \mathrm{H}^2(G/N, A^N) \xrightarrow{\mathrm{inf}} \mathrm{H}^2(G, A),
\]
where $\mathrm{inf}$ is the inflation map, $\mathrm{res}$ is the restriction map, and $d$ is the transgression map.

    Let $\pi$ be an irreducible representation of $\GL_2(E)$. We say that $\pi$ is $D^\times$-distinguished or simply distinguished if $\Hom_{D^\times}(\pi,\boldsymbol{1})\neq 0$.

    \section{Cohomological Preliminaries}
    In this section, we collect several cohomological results that will be used in the proofs of the distinction criteria in the following sections. We always assume $\ell\neq p$.

\begin{prop}\label{coE}
Let \(E/F\) be a quadratic extension of non-archimedean local fields of residue 
characteristic \(p\), and let \(\ell \neq p\) be a prime. Let \(q_E\) denote the 
cardinality of the residue field of \(E\). Then
\[
\dim_{\overline{\boldsymbol F_\ell}} \mathrm{H}^1(E^\times,\boldsymbol{1})
=
\begin{cases}
2, & \ell \mid q_E - 1, \\[4pt]
1, & \ell \nmid q_E - 1.
\end{cases}
\]

\end{prop}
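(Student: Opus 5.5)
Since the coefficient module $\overline{\boldsymbol F_\ell}$ is trivial, the description of $\mathrm{H}^1$ in Section 2 gives
\[
\mathrm{H}^1(E^\times,\boldsymbol{1})=\Hom_{cts}(E^\times,\overline{\boldsymbol F_\ell}),
\]
the space of continuous homomorphisms into the additive group. Continuity here means the kernel is open, because $\overline{\boldsymbol F_\ell}$ carries the discrete topology. The plan is therefore to compute the maximal continuous quotient of $E^\times$ that is an elementary abelian $\ell$-group, and read off its rank.

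First I will fix a uniformizer $\varpi_E$ and use the standard decomposition
\[
E^\times\cong \varpi_E^{\mathbb Z}\times \mu_{q_E-1}\times U_E^1 ,
\]
where $\mu_{q_E-1}$ is the group of Teichm\"uller roots of unity and $U_E^1=1+\mathfrak p_E$. Additive homomorphisms split accordingly:
\[
\Hom_{cts}(E^\times,\overline{\boldsymbol F_\ell})\cong \Hom(\mathbb Z,\overline{\boldsymbol F_\ell})\oplus\Hom(\mu_{q_E-1},\overline{\boldsymbol F_\ell})\oplus \Hom_{cts}(U_E^1,\overline{\boldsymbol F_\ell}).
\]
I then treat the three summands in turn.
\begin{enumerate}
\item The first summand is one-dimensional, spanned by the normalized valuation $v_E$ viewed modulo $\ell$.
\item The second summand is $\Hom(\mathbb Z/(q_E-1),\overline{\boldsymbol F_\ell})$. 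It is one-dimensional if $\ell\mid q_E-1$ and zero otherwise, since an $\overline{\boldsymbol F_\ell}$-vector space has no nonzero $m$-torsion for $\ell\nmid m$.
\item The third summand vanishes. The group $U_E^1$ is pro-$p$, and any continuous homomorphism to $\overline{\boldsymbol F_\ell}$ factors through a finite $p$-group quotient $U_E^1/U_E^n$. Every element of the image is then killed both by a power of $p$ and by $\ell$, so it is zero because $\ell\neq p$.
\end{enumerate}
Adding the three contributions gives $2$ or $1$, exactly as in the statement of Proposition \ref{coE}.

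The only point needing care is the continuity and open-kernel argument for $U_E^1$. This is where the hypothesis $\ell\neq p$ enters, and it is standard. The char-zero hypothesis on $F$ is not needed.
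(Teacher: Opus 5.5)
Your proposal is correct and follows essentially the same approach as the paper, using the same decomposition $E^\times\cong\varpi_E^{\mathbb Z}\times\boldsymbol{k}_E^\times\times(1+\mathfrak p_E)$ and the same count of the three summands. Your remark that a continuous homomorphism on the pro-$p$ group $1+\mathfrak p_E$ factors through a finite $p$-group quotient $U_E^1/U_E^n$ spells out a step the paper states in one line.
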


\begin{proof}
Note that $\mathrm{H}^1(E^\times,\boldsymbol{1})=\Hom(E^\times,\overline{\boldsymbol{F}_\ell})$ and there is a standard decomposition
\[
E^\times \simeq \langle \pi_E \rangle \times \mathcal O_E^\times,
\qquad
\mathcal O_E^\times \simeq \boldsymbol{k}_E^\times \times (1 + \mathfrak p_E),
\]
where \(\pi_E\) is a uniformizer of \(E\), and \(\boldsymbol{k}_E \) 
is its residue field. Since \(\ell \neq p\), the pro-\(p\) group 
\(1 + \mathfrak p_E\) has no non-trivial homomorphism to \(\overline{\boldsymbol F_\ell}\). 
Hence
\[
\dim\ \Hom(E^\times, \overline{\boldsymbol F_\ell})
=
\dim\ \Hom(\langle \pi_E \rangle, \overline{\boldsymbol F_\ell})
+
\dim\ \Hom(\boldsymbol{k}_E^\times, \overline{\boldsymbol F_\ell}).
\]

The first factor is one-dimensional since \(\langle \pi_E \rangle \simeq \mathbb Z\). 
For the second factor, \(\boldsymbol{k}_E^\times\) is cyclic of order \(q_E - 1\), so its 
homomorphisms to \(\overline{\boldsymbol F_\ell}\) are non-zero exactly when 
\(\ell \mid q_E - 1\), in which case the space is one-dimensional. 
This finishes the proof.
\end{proof}

\begin{coro}
\label{coEF}
With the same notations and assumptions as in Proposition \ref{coE}, we have

\[
\dim_{\overline{\boldsymbol F_\ell}} \mathrm{H}^1(E^\times/F^\times, \mathbf{1})
=
\begin{cases}
1, &\mbox{if }E/F \text{ is unramified} \text{ and } \ell \mid q + 1 , \\[4pt]
1, &\mbox{if } E/F \text{ is ramified} \text{ and } \ell = 2 , \\[4pt]
0, & \text{otherwise}.
\end{cases}
\]
where $q$ denote the cardinality of the residue field of $F$.
\end{coro}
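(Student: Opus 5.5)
Since $\boldsymbol{1}$ is a trivial module, $\mathrm{H}^1(E^\times/F^\times,\boldsymbol{1})=\Hom(E^\times/F^\times,\overline{\boldsymbol F_\ell})$. Every such homomorphism is continuous, and since $\overline{\boldsymbol F_\ell}$ is abelian of exponent $\ell$, it factors through the maximal abelian exponent-$\ell$ quotient $(E^\times/F^\times)\otimes\mathbb Z/\ell$. So the plan is to compute $\dim_{\mathbb F_\ell}\bigl(E^\times/F^\times E^{\times\ell}\bigr)$, as in Proposition \ref{coE}. Write $E^\times=\langle\pi_E\rangle\times \boldsymbol k_E^\times\times(1+\mathfrak p_E)$. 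The pro-$p$ factor $1+\mathfrak p_E$ is $\ell$-divisible (as $\ell\neq p$), so it dies in the quotient. Hence
\[
E^\times/E^{\times\ell}\simeq \mathbb Z/\ell\times \boldsymbol k_E^\times/\boldsymbol k_E^{\times\ell},
\]
and we must compute the image of $F^\times$ in this group and take the cokernel.

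\textbf{Unramified case.} We may take $\pi_E=\pi_F\in F^\times$, so the $\mathbb Z/\ell$ factor is killed. Then $E^\times/F^\times\simeq \mathbb Z\times\ldots$ collapses to a quotient of $\mathcal O_E^\times/\mathcal O_F^\times$. Modulo pro-$p$ parts, this is $\boldsymbol k_E^\times/\boldsymbol k_F^\times$, which is cyclic of order $q+1$. Indeed, the norm map $\boldsymbol k_E^\times\to\boldsymbol k_F^\times$ is surjective and $\boldsymbol k_E^\times/\boldsymbol k_F^\times\simeq\ker$ of the norm. Thus $\dim=1$ if $\ell\mid q+1$, and $0$ otherwise.

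\textbf{Ramified case.} Here $\boldsymbol k_E=\boldsymbol k_F$, so $\boldsymbol k_F^\times$ surjects onto the residue factor, which is therefore killed. We can choose $\pi_E$ with $\pi_E^2\in\pi_F\,\mathcal O_E^\times$; more precisely, $\pi_F=u\pi_E^2$ with $u\in\mathcal O_E^\times$. Modulo the already-killed units, $F^\times$ then contributes $\pi_E^2$ to the $\mathbb Z$-factor. So the quotient is $\mathbb Z/(2,\ell)$, giving dimension $1$ iff $\ell=2$.

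The only delicate point is the bookkeeping of units in the ramified case, and the claim that $\boldsymbol k_F^\times$ covers the residue part. For the first, the valuation map $E^\times/F^\times\mathcal O_E^\times\simeq\mathbb Z/2$ handles it cleanly, since $F^\times\mathcal O_E^\times/\mathcal O_E^\times$ has index $2$ in $E^\times/\mathcal O_E^\times$ and $\mathcal O_E^\times=\mathcal O_F^\times(1+\mathfrak p_E)$ up to the $\ell$-divisible pro-$p$ part. For the second, in both cases we use the exact sequence
\[
\mathcal O_E^\times/\mathcal O_F^\times\to E^\times/F^\times\to \mathbb Z/e\to 0
\]
with $e$ the ramification index, and tensor with $\mathbb Z/\ell$ (right exact). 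In the unramified case this sequence is an isomorphism on the unit part. In the ramified case the unit part is $\ell$-divisible, which suffices.
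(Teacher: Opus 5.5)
Your argument is correct and is essentially the paper's. Both reduce $\Hom(E^\times/F^\times,\overline{\boldsymbol F_\ell})$ to a residual finite cyclic piece after discarding the $\ell$-divisible pro-$p$ part: $\boldsymbol k_E^\times/\boldsymbol k_F^\times\simeq C_{q+1}$ when $E/F$ is unramified, and the valuation quotient $\mathbb Z/2$ when it is ramified.

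The only difference is packaging. You apply right exactness of $-\otimes\mathbb Z/\ell$ to the valuation sequence instead of asserting a direct-product splitting, so you treat the ramified case uniformly in $p$. The paper instead handles $p=2$ separately, by observing that $E^\times/F^\times$ is then pro-$2$.
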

\begin{proof}
We have \(\mathrm{H}^1(E^\times/F^\times, \mathbf{1}) = \operatorname{Hom}(E^\times/F^\times, \overline{\boldsymbol F_\ell})\).
If $E/F$ is an unramified quadratic extension, $E^\times/F^\times$ is the direct product of the cyclic group $C_{q+1}$ of order $q+1$ and a pro-$p$ group. Then $$\Hom(E^\times/F^\times,\overline{\boldsymbol{F}_\ell})=\Hom(C_{q+1},\overline{\boldsymbol{F}_\ell})$$ since $\ell\neq p$ and the dimension is 1 if and only if $\ell|q+1$.

If $E/F$ is a ramified quadratic extension, when $p\neq2$, $E^\times/F^\times$ is the direct product of the cyclic group $C_2$ of order 2 and a pro-$p$ group. Then $\Hom(E^\times/F^\times,\overline{\boldsymbol{F}_\ell})=\Hom(C_{2},\overline{\boldsymbol{F}_\ell})$ and the dimension is 1 if and only if $\ell=2$. When $p=2$, $E^\times/F^\times$ is a pro-$2$ group so $\Hom(E^\times/F^\times, \overline{\boldsymbol{F}_\ell})=0$ since $\ell\neq 2$.

This proves the formula.
\end{proof}

\begin{lem}\label{l51}
   Let $\chi_1,\chi_2$ be the characters of $F^\times$. Then $\Ext^1_{F^\times\times F^\times}(\boldsymbol{1},\chi_1\boxtimes\chi_2)\neq0$ if and only if $\chi_1|_{F^\times}=\chi_2|_{F^\times}=\boldsymbol{1}$.
\end{lem}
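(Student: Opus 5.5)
We will compute $\Ext^1_{F^\times\times F^\times}(\boldsymbol{1},\chi_1\boxtimes\chi_2)=\mathrm{H}^1(F^\times\times F^\times,\chi_1\boxtimes\chi_2)$ through a Künneth-type argument, or more concretely through the five-term exact sequence of Section 2. The statement's condition ``$\chi_1|_{F^\times}=\chi_2|_{F^\times}=\boldsymbol{1}$'' just means $\chi_1=\chi_2=\boldsymbol{1}$, so the claim is that the group is nonzero exactly when the coefficient character is trivial.

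\textbf{Step 1: trivial coefficients.} If $\chi_1=\chi_2=\boldsymbol{1}$, then
\[
\mathrm{H}^1(F^\times\times F^\times,\boldsymbol{1})=\Hom(F^\times\times F^\times,\overline{\boldsymbol F_\ell})\supseteq \Hom(F^\times,\overline{\boldsymbol F_\ell}).
\]
The right-hand side is nonzero: composing the valuation $F^\times\to\mathbb Z$ with the reduction $\mathbb Z\to\overline{\boldsymbol F_\ell}$ gives a nonzero homomorphism, as in the proof of Proposition \ref{coE}. So the $\Ext^1$ group is nonzero in this case.

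\textbf{Step 2: nontrivial coefficients.} Suppose, by symmetry, that $\chi_1\neq\boldsymbol{1}$. Apply the five-term sequence with $G=F^\times\times F^\times$, normal subgroup $N=F^\times\times 1$ and $A=\chi_1\boxtimes\chi_2$. Since $\chi_1$ is nontrivial on $N$, we have $A^N=0$, so both $\mathrm{H}^1(G/N,A^N)$ and $\mathrm{H}^2(G/N,A^N)$ vanish. The sequence then gives
\[
\mathrm{H}^1(G,A)\cong \mathrm{H}^1(F^\times,\chi_1)^{G/N}.
\]
It therefore suffices to show that $\mathrm{H}^1(F^\times,\chi)=0$ for every nontrivial character $\chi$ of the abelian group $F^\times$.

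\textbf{Step 3: the vanishing lemma.} Pick $z\in F^\times$ with $\chi(z)\neq1$, and let $f$ be a continuous cocycle. Because $F^\times$ is abelian, $f(zg)=f(gz)$, and expanding both sides with the cocycle identity gives
\[
(\chi(z)-1)f(g)=(\chi(g)-1)f(z).
\]
Hence $f(g)=\chi(g)a-a$ with $a=f(z)/(\chi(z)-1)$, so $f$ is a coboundary. This is the usual center-kills-cohomology argument.

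The only point needing care is the identification of the smooth $\Ext^1$ with continuous $\mathrm{H}^1$, which the paper already uses implicitly. So the main obstacle is Step 2, namely checking that the $\mathrm{H}^2$ term causes no trouble. Here it vanishes outright because $A^N=0$, so I expect no real difficulty.
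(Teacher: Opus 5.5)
Your proof is correct, but it takes a different route from the paper. The paper invokes the Künneth formula
\[
\mathrm{H}^1(F^\times\times F^\times,\chi_1\boxtimes\chi_2)\cong\bigl(\mathrm{H}^1(F^\times,\chi_1)\otimes\mathrm{H}^0(F^\times,\chi_2)\bigr)\oplus\bigl(\mathrm{H}^0(F^\times,\chi_1)\otimes\mathrm{H}^1(F^\times,\chi_2)\bigr)
\]
and then computes each factor. Here $\mathrm{H}^0$ is nonzero only for the trivial character. For trivial $\chi$, $\mathrm{H}^1(F^\times,\chi)=\Hom(F^\times,\overline{\boldsymbol F_\ell})$, of dimension $1$ or $2$; otherwise it is $0$, by exactly your Step 3.

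You instead use inflation--restriction along $N=F^\times\times 1$. This needs only the elementary low-degree part of the five-term sequence and no Künneth formula, which for continuous cohomology of the non-compact group $F^\times$ would need some justification. In fact only the injectivity of $\mathrm{H}^1(G,A)\to\mathrm{H}^1(N,A)^{G/N}$ is used, so the $\mathrm{H}^2$ term you flag as the main obstacle plays no role. You could even drop Step 2 entirely: your Step 3 applies verbatim to the abelian group $F^\times\times F^\times$ with $z=(z_1,1)$ and $\chi_1(z_1)\neq 1$.

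What the paper's route buys is the exact dimension in the trivial case ($2$ if $\ell\nmid q-1$, $4$ if $\ell\mid q-1$). You recover this just as easily from $\Hom(F^\times\times F^\times,\overline{\boldsymbol F_\ell})=\Hom(F^\times,\overline{\boldsymbol F_\ell})^{\oplus 2}$. In any case only the vanishing half of the lemma is used later, in Proposition \ref{pil=2}.
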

\begin{proof}
    By the well-known K$\ddot{\mathrm{u}}$nneth formula, we obtain that
    \begin{align*}
        \Ext^1_{F^\times\times F^\times}(\boldsymbol{1},\chi_1\boxtimes\chi_2)=\mathrm{H}^1(F^\times\times F^\times,\chi_1\boxtimes\chi_2)\cong(\mathrm{H}^1(F^\times,\chi_1)\otimes \mathrm{H}^0(F^\times,\chi_2))\oplus(\mathrm{H}^1(F^\times,\chi_2)\otimes \mathrm{H}^0(F^\times,\chi_1))
    \end{align*}
where $\mathrm{H}^0(F^\times,\chi):=\{v|\chi(a)v=v,\forall a\in F^\times\}$. Therefore, $\mathrm{dim\ H}^0(F^\times,\chi)=\begin{cases}
    1,\ \chi|_{F^\times}=\boldsymbol{1};
    \\0,\ \chi|_{F^\times}\neq\boldsymbol{1}.\end{cases}$

If $\chi|_{F^\times}=\boldsymbol{1}, \mathrm{H}^1(F^\times,\chi)=\Hom(F^\times,\overline{\boldsymbol{F}_\ell})$. By the same argument in Proposition \ref{coE},
\begin{align*}
    \dim\ \Hom(F^\times,\overline{\boldsymbol{F}_\ell})=\begin{cases}
        1,\ \ell\nmid q-1;\\
        2,\ \ell\ |\ q-1.
    \end{cases}
\end{align*}
If $\chi|_{F^\times}\neq\boldsymbol{1}$, for any cocycle $f\in\mathrm{H}^1(F^\times,\chi)$, taking $a\in F^\times$ such that $\chi(a)\neq 1$, we have $\forall b\in F^\times$,
\begin{align*}
    f(b)=\dfrac{\chi(b)-1}{\chi(a)-1}f(a)
\end{align*}
since $f(ab)=f(ba)$. Therefore $f$ is a coboundry i.e. $\mathrm{H}^1(F^\times,\chi)=0$.

In summary, $\Ext^1_{F^\times\times F^\times}(\boldsymbol{1},\chi_1\boxtimes\chi_2)\neq0$ if and only if $\chi_1|_{F^\times}=\chi_2|_{F^\times}=\boldsymbol{1}$. Moreover, in this case,
\begin{align*}
    \mathrm{dim}\ \Ext^1_{F^\times\times F^\times}(\boldsymbol{1},\chi_1\boxtimes\chi_2)=\begin{cases}
    2,\ \ell\nmid q-1;\\
        4,\ \ell\ |\ q-1.\\
    \end{cases}
\end{align*}
This finishes the proof.
\end{proof}

\begin{prop}\label{cofinite}
    Suppose $G$ is a finite $p$-group, i.e., its order $|G|$ is a power of $p$, $A$ is a discrete $\ell$-torsion $G$-module and $\ell\neq p$. Then $\mathrm{H}^r(G,A)=0$ for $r\geq 1$.
\end{prop}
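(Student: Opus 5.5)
The plan is the standard averaging argument: the order $|G|$ is invertible in $A$, so each higher cohomology class is killed by $|G|$ and is therefore zero.

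\emph{Step 1: $A$ is a $\mathbb Z[1/p]$-module.} Since $A$ is $\ell$-torsion, every $a\in A$ satisfies $\ell^k a=0$ for some $k$. Put $n=|G|=p^m$. Because $\gcd(n,\ell^k)=1$, there are integers $u,v$ with $un+v\ell^k=1$. Then $a=u n a$, so multiplication by $n$ is surjective. If $na=0$, then $a=una+v\ell^k a=0$, so multiplication by $n$ is injective. Hence $n$ acts invertibly on $A$. This action is $G$-equivariant, so by functoriality it also acts invertibly on every $\mathrm{H}^r(G,A)$.

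\emph{Step 2: $n$ kills $\mathrm{H}^r(G,A)$ for $r\ge1$.} We use the composite $\mathrm{cor}\circ\mathrm{res}$ for the trivial subgroup $\{1\}\subset G$, which is multiplication by $[G:1]=n$. It factors through $\mathrm{H}^r(\{1\},A)$, and this group vanishes for $r\ge1$. So $n\cdot\mathrm{H}^r(G,A)=0$.

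If one prefers to avoid corestriction, there is a direct cochain argument. Given an $r$-cocycle $f$, define an $(r-1)$-cochain by
\[
h(g_1,\dots,g_{r-1})=\sum_{g\in G} f(g_1,\dots,g_{r-1},g).
\]
The cocycle identity, summed over the last variable, gives $\pm\, n f=\delta h$. For $r=1$ this reads $h=\sum_{g}f(g)$. Summing $f(sg)=f(s)+s f(g)$ over $g$ gives $h=n f(s)+s h$, i.e.\ $n f(s)=h-s\cdot h$, which is a coboundary.

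\emph{Step 3: conclusion.} By Steps 1 and 2, $\mathrm{H}^r(G,A)$ is a group on which $n$ acts both invertibly and as zero. Hence $\mathrm{H}^r(G,A)=0$ for all $r\ge1$.

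No step is a real obstacle. The only point needing care is that $A$ need not be finitely generated, which is why Step 1 works elementwise. The continuity conditions are automatic because $G$ is finite and discrete.
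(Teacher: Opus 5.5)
Your proposal is correct and follows the same route as the paper. The paper cites Milne's Corollary 1.31 for $|G|\cdot\mathrm{H}^r(G,A)=0$ (which you prove directly via $\mathrm{cor}\circ\mathrm{res}$ or the averaging cochain) and then uses a B\'ezout relation between $|G|$ and $\ell$ to kill the classes; your elementwise version of that step, showing that $|G|$ acts invertibly on $A$, is slightly more careful, since it also covers $\ell$-power torsion and not only modules annihilated by $\ell$.
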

\begin{proof}
    Followed by \cite[Corollary 1.31]{milneCFT}, if $|G|=p^n$, we have $p^n\mathrm{H}^r(G,A)=0$ for $r\geq1$. 
    
    We claim that $\mathrm{H}^r(G,A)$ is an $\ell$-torsion, i.e., annihilated by $\ell$. Consider an arbitrary function $\varphi:G^r\longrightarrow A\in \mathrm{H}^r(G,A)$. Since $G$ is finite, $\varphi(G)$ is finite. Denote $\varphi(G)=\{a_1,...,a_k\}$. Since $A$ is an $\ell$-torsion, $\ell\varphi(G)=0$ i.e. $\ell\varphi=0$.

    Since $\ell$ and $p^n$ are coprime, there exists integers $u,v$ such that $u\ell+vp^n=1$. For any $\varphi\in\mathrm{H^r(G,A)},$\begin{align*}
        \varphi=(u\ell+vp^n)\varphi=u\ell\varphi+vp^n\varphi=0
    \end{align*}
    This finishes the proof.
\end{proof}

\begin{coro}\label{copro-p}
    Suppose $G$ is a pro-$p$ group, $A$ is a discrete $\ell$-torsion $G$-module and $\ell\neq p$. Then $\mathrm{H}^r(G,A)=0$ for $r\geq 1$.
\end{coro}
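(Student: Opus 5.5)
The idea is to reduce to Proposition \ref{cofinite} by writing $G$ as a projective limit of finite $p$-groups. Continuous cohomology of a profinite group with coefficients in a discrete module is then the direct limit of the cohomologies of those finite quotients.

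Write $G=\varprojlim_U G/U$, where $U$ runs over the open normal subgroups of $G$. Each quotient $G/U$ is a finite group of $p$-power order. Since $A$ is discrete and every cochain is continuous, the following standard facts hold (see e.g.\ Serre, \emph{Galois Cohomology}, I.2.2, or \cite{milneCFT}):
\begin{enumerate}[label=(\roman*)]
\item $A=\bigcup_U A^U$;
\item every continuous cochain $G^r\to A$ factors through some $(G/U)^r$ and takes values in some $A^U$;
\item there is a natural isomorphism
\[
\mathrm{H}^r(G,A)\cong\varinjlim_U \mathrm{H}^r(G/U,A^U),
\]
where the transition maps are inflations.
\end{enumerate}
Fact (ii) is where continuity is used. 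Because $G^r$ is compact and $A$ is discrete, a continuous cochain is locally constant with finitely many values. Its values therefore lie in a finitely generated submodule, which is fixed by some open $U$.

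Each $A^U$ is again a discrete $\ell$-torsion module, now over the finite $p$-group $G/U$. Proposition \ref{cofinite} gives $\mathrm{H}^r(G/U,A^U)=0$ for $r\ge1$, and a direct limit of zero groups is zero.

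The only step with real content is the direct-limit identification. To avoid citing it, one can argue directly. Take a continuous $r$-cocycle $f$ on $G$, and choose $U$ so that $f$ factors through $(G/U)^r$ with values in $A^U$. Then $f$ is inflated from a cocycle $\bar f$ of $G/U$. Proposition \ref{cofinite} writes $\bar f=\partial\bar g$ for a cochain $\bar g$ on $G/U$. The inflation of $\bar g$ is a continuous cochain on $G$ with $\partial(\inf\bar g)=f$, so $f$ is a coboundary in $\mathrm{H}^r(G,A)$.

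Alternatively, the argument in the proof of Proposition \ref{cofinite} adapts directly. The class $[f]$ is killed by $\ell$, and its restriction to the finite quotient level is killed by $|G/U|=p^n$. Combining these through a Bézout relation $u\ell+vp^n=1$ also kills $[f]$.
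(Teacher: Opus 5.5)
Your proposal is correct and follows the paper's proof: the paper likewise writes $\mathrm{H}^r(G,A)\cong\varinjlim_U \mathrm{H}^r(G/U,A^U)$ over the open normal subgroups $U$ (citing Milne, Prop.~4.2) and kills each term with Proposition~\ref{cofinite} applied to the finite $p$-group $G/U$. Your explicit inflation-of-coboundaries argument simply unpacks that direct-limit identification; note that in your B\'ezout variant the class on the finite quotient is \emph{inflated} to $G$, not restricted.
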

\begin{proof}
    According to \cite[Proposition 4.2]{milneCFT}, we have $\mathrm{H}^r(G,A)=\lim\limits_{\longrightarrow}\mathrm{H}^r(G/H,A^H)$ as $H$ runs through the open normal subgroups of $G$. Since $G$ is a pro-$p$ group, $G/H$ is a finite $p$-group and $A^H\subset A$ is also a discrete $\ell$-torsion $G/H$-module. Using Proposition \ref{cofinite}, $\mathrm{H}^r(G/H,A^H)=0$. Then $\mathrm{H}^r(G,A)=0$ for $r\geq 1$.
\end{proof}

\begin{prop}\label{extfix}
    Let $G$ be a profinite group and $U$ be the normal pro-$p$ subgroup of $G$. Assume $\ell\neq p$. Suppose $V$ is a smooth $\overline{\boldsymbol{F}_\ell}$-representation of $G$ and $\boldsymbol{1}$ is the trivial representation of $G$. Taking $U$-invariant induces an isomorphism
    \begin{align*}
        \Ext_G^1(V,\boldsymbol{1})\cong\Ext_{G/U}^1(V^U,\boldsymbol{1})
    \end{align*}
    where $\Ext$ groups are computed in the respective categories of smooth representations.
\end{prop}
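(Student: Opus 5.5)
We will compare the two $\Ext^1$ groups through their interpretation as extensions (Yoneda $\Ext^1$ in the category of smooth representations). The key input is that, since $U$ is pro-$p$ and $\ell\neq p$, the functor $W\mapsto W^U$ from smooth $\overline{\boldsymbol F_\ell}$-representations of $G$ to smooth representations of $G/U$ is exact. Indeed, for a smooth $W$ every vector is fixed by an open normal subgroup $U'\subset U$. On $W^{U'}$ the finite $p$-group $U/U'$ acts, and averaging over it (which is legitimate since $|U/U'|$ is invertible in $\overline{\boldsymbol F_\ell}$) gives a projector $e_U$ onto $W^U$. Equivalently, $\mathrm{H}^1(U,W)=0$ by Corollary \ref{copro-p}, so taking $U$-invariants of a short exact sequence stays exact. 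The averaging projector also shows that $W=W^U\oplus W(U)$ functorially, where $W(U)=\ker e_U$ is the sum of the nontrivial $U$-isotypic parts. Both summands are $G$-stable because $U$ is normal.

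First I will define the map. Given an extension $0\to\boldsymbol 1\to X\to V\to 0$ of smooth $G$-representations, exactness of $(-)^U$ yields an extension $0\to\boldsymbol 1\to X^U\to V^U\to0$ of $G/U$-representations, since $\boldsymbol 1^U=\boldsymbol 1$. This is compatible with Baer sums and equivalences, so it gives a linear map $\Phi:\Ext^1_G(V,\boldsymbol 1)\to\Ext^1_{G/U}(V^U,\boldsymbol 1)$.

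For the inverse, I inflate an extension $0\to\boldsymbol 1\to Y\to V^U\to0$ of $G/U$-modules to $G$ and pull it back along the projection $e_U:V\to V^U$, which is $G$-equivariant. This gives a map $\Psi$ in the other direction. Since $(e_U^*Y)^U=Y$ (because $Y$ is $U$-trivial and $V^U\subset V$), we get $\Phi\Psi=\mathrm{id}$.

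The main step is $\Psi\Phi=\mathrm{id}$. Equivalently, every extension $X$ of $V$ by $\boldsymbol 1$ is the pullback of $X^U$ along $e_U$. To see this, I use the decomposition $X=X^U\oplus X(U)$. Applying it to $0\to\boldsymbol 1\to X\to V\to0$ and using functoriality, the map $X(U)\to V(U)$ is an isomorphism, because $\boldsymbol 1(U)=0$ and the decomposition is exact. Hence $X\cong X^U\oplus V(U)$ as $G$-representations, compatibly with the maps to $V=V^U\oplus V(U)$, and this is exactly the pullback $e_U^*X^U$. In other words, the extension class of $X$ is pulled back from $X^U$.

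The obstacle I anticipate is verifying that the decomposition $W=W^U\oplus W(U)$ is $G$-equivariant and natural for arbitrary smooth $W$. This needs care because $e_U$ is defined vectorwise through varying $U'$. I would handle it by checking that $e_U$ (integration against the normalized Haar measure on $U$, which has values in $\mathbb Z[1/p]$) commutes with conjugation by $G$, using normality of $U$, and with $G$-maps. Alternatively, one can run the Hochschild–Serre argument with $\mathrm{H}^r(U,-)=0$ for $r\ge1$ from Corollary \ref{copro-p}, giving $\Ext^i_G(V,\boldsymbol 1)\cong\Ext^i_{G/U}(V^U,\boldsymbol 1)$ after writing $\Hom_G(V,\boldsymbol 1)=\Hom_{G/U}(V^U,\boldsymbol 1)$ and observing that $(-)^U$ preserves injectives. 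That version could serve as a fallback.
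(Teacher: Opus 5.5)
Your proposal is correct and follows essentially the same route as the paper: $\Phi$ is given by taking $U$-invariants (exact by Corollary \ref{copro-p}), $\Psi$ by inflating and pulling back along the averaging projection $e_U\colon V\to V^U$, and one then checks that the two maps are mutually inverse. The differences are cosmetic: the paper verifies $\Psi\circ\Phi=\mathrm{id}$ via the map $x\mapsto(e_U(x),f(x))$ and the snake lemma, whereas you use the natural splitting $X=X^U\oplus X(U)$, and you also note compatibility with Baer sums, which the paper leaves implicit.
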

\begin{proof}
    Let $M$ be an extension of $V$ by the trivial character $\boldsymbol{1}$ in the categories of smooth representations of $G$. There exists an exact sequence:
    \begin{align*}
        0\longrightarrow\boldsymbol{1}\longrightarrow M \stackrel{f}{\longrightarrow} V\longrightarrow0
    \end{align*}
    where $f:M\rightarrow V$ is the quotient map. 
    Since $\ell\neq p$ and $U$ is a pro-$p$ group, the $U$-invariant functor $(-)^U$ is exact due to Corollary \ref{copro-p}. Then we have the following exact sequence
    \begin{align*}
        0\longrightarrow\boldsymbol{1}\longrightarrow M^U \longrightarrow V^U\longrightarrow0.
    \end{align*}
    Then we define a map
    \begin{align*}
\Phi:\Ext_G^1(V,\boldsymbol{1})&\longrightarrow\Ext_{G/U}^1(V^U,\boldsymbol{1})\\
M&\longmapsto M^U.
    \end{align*}
    
    Conversely, if $M'$ is an extension of $V^U$ by the trivial character $\boldsymbol{1}$ in the categories of smooth representations of $G/U$, then
    \begin{align*}
        0\longrightarrow\boldsymbol{1}\longrightarrow M' \stackrel{f'}{\longrightarrow} V^U\longrightarrow0
    \end{align*}
    is exact.
    Followed by \cite[Section 12.6:Extensions]{stacks-project}, denote $e_U:V\longrightarrow V^U$ the averaging projection. Then $M'$ has a pullback $\overline{M'}=M'\times_{V^U}V$, which is defined as:
    \begin{align*}
        \overline{M'}:=\{(x,v)|x\in M',v\in V\ \mathrm{satisfying}\ f'(x)=e_U(v)\}.
    \end{align*}
    Then we define another map
    \begin{align*}
        \Psi:\Ext_{G/U}^1(V^U,\boldsymbol{1})&\longrightarrow\Ext_G^1(V,\boldsymbol{1})\\
        M'&\longmapsto\overline{M'}
    \end{align*}

    Next we need to prove that $\Psi,\Phi$ are mutually inverse. Consider $\Psi\circ\Phi$, $\forall x\in M, \Psi\circ\Phi(x)=(e_U(x),f(x))$. Then $\Psi\circ\Phi$ induces isomophisms $\boldsymbol{1}\cong\boldsymbol{1}$ and $V\cong\Psi\circ\Phi(V)$ in the category of smooth representaions of $G$. By the snake Lemma, $M\cong\Psi\circ\Phi(M)$.

    For $\Phi\circ\Psi$, $\forall x'\in M'$,
    \begin{align*}
        \Phi\circ\Psi(x)=\{(x,v)|x\in M',v\in V^U\ \mathrm{satisfying}\ f'(x)=v\}.
    \end{align*}
    Thus we have an isomorphism $M'\cong\Phi\circ\Psi(M')$. This finishes the proof.
\end{proof}

\begin{coro}\label{exth1}
    With the same notations as in Proposition \ref{extfix}, suppose $\ell\neq p$ and $G/U$ is a finite group. Then there exists an isomorphism
    \begin{align*}
        \Ext_G^1(V,\boldsymbol{1})\cong \mathrm{H}^1(G/U,(V^U)^\vee)
    \end{align*}
    where $(V^U)^\vee$ is the dual space of $V^U$.
\end{coro}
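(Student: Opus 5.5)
By Proposition \ref{extfix}, taking $U$-invariants gives an isomorphism $\Ext_G^1(V,\boldsymbol{1})\cong\Ext_{G/U}^1(V^U,\boldsymbol{1})$. It therefore suffices to show that, for the finite group $\Gamma:=G/U$ and the finite-dimensional (or at least smooth, i.e.\ arbitrary since $\Gamma$ is finite) $\overline{\boldsymbol F_\ell}[\Gamma]$-module $W:=V^U$, one has
\[
\Ext^1_{\Gamma}(W,\boldsymbol{1})\cong \mathrm{H}^1(\Gamma,W^\vee),
\]
where $W^\vee=\Hom_{\overline{\boldsymbol F_\ell}}(W,\overline{\boldsymbol F_\ell})$ carries the contragredient action $(\gamma\lambda)(w)=\lambda(\gamma^{-1}w)$. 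Since $\Gamma$ is finite, every representation is smooth. Hence the category of smooth $\Gamma$-representations is just the category of $\overline{\boldsymbol F_\ell}[\Gamma]$-modules, and Ext there is the usual module Ext.

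The key step is the standard tensor--hom adjunction. For $\overline{\boldsymbol F_\ell}[\Gamma]$-modules $W,N$ there is a natural isomorphism
\[
\Hom_\Gamma(W,N)\cong \Hom_\Gamma(\boldsymbol{1},\Hom_{\overline{\boldsymbol F_\ell}}(W,N)),
\]
with $\Gamma$ acting on $\Hom_{\overline{\boldsymbol F_\ell}}(W,N)$ by conjugation. To pass to derived functors, I will take an injective resolution $N\to I^\bullet$ of $N=\boldsymbol{1}$. Then $\Hom_{\overline{\boldsymbol F_\ell}}(W,-)$ is exact, because we are working over a field. It also sends injective $\Gamma$-modules to injective ones, since it is right adjoint to the exact functor $W\otimes_{\overline{\boldsymbol F_\ell}}-$. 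Applying $\Hom_\Gamma(\boldsymbol{1},-)=(-)^\Gamma$ to $\Hom(W,I^\bullet)$ therefore computes $\mathrm{H}^\bullet(\Gamma,\Hom(W,N))$, and the adjunction identifies the result with $\Ext^\bullet_\Gamma(W,N)$. Taking $N=\boldsymbol{1}$ gives $\Hom(W,\boldsymbol{1})=W^\vee$. In degree one this yields the claim.

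Alternatively, I could give a direct cocycle argument in the spirit of the proof of Proposition \ref{extfix}. First choose a linear splitting $M\cong \boldsymbol{1}\oplus W$ of an extension $0\to\boldsymbol{1}\to M\to W\to 0$. The action then has the form $\gamma(c,w)=(c+f_\gamma(w),\gamma w)$. Next check that $\gamma\mapsto f_\gamma\circ\gamma^{-1}\in W^\vee$ is a $1$-cocycle, and that changing the splitting alters it by a coboundary. This matches the definition of $\mathrm{H}^1$ given in Section 2 and avoids derived functors. The continuity condition there is automatic because $\Gamma$ is finite.

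The main obstacle is checking that the Ext groups in the statement, namely Ext in the smooth category of the profinite group $G$, agree with those used above. Proposition \ref{extfix} already handles the passage from $G$ to $G/U$. On the finite quotient the smooth and abstract module categories coincide, so the only remaining work is to track the normalisation of the dual action so that the cocycle condition comes out correctly.
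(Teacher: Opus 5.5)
Your proposal is correct and follows essentially the same route as the paper: first reduce to the finite quotient $G/U$ via Proposition \ref{extfix}, then use the standard Hom--tensor adjunction (which the paper simply quotes from Enochs--Jenda) to rewrite $\Ext^1_{G/U}(V^U,\boldsymbol{1})$ as $\Ext^1_{G/U}(\boldsymbol{1},(V^U)^\vee)=\mathrm{H}^1(G/U,(V^U)^\vee)$. Your injective-resolution argument, and the alternative cocycle computation, simply spell out the fact the paper cites.
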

\begin{proof}
    Followed by Proposition \ref{extfix} and \cite[Theorem 3.2.1]{Jenda00},
    \begin{align*}
    \Ext^1_G(V,\boldsymbol{1})\cong\Ext_{G/U}^1(V^U,\boldsymbol{1})\cong\Ext^1_{G/U}(\boldsymbol{1},(V^U)^\vee)
    \end{align*} 
    then $\Ext^1_{G/U}(\boldsymbol{1},(V^U)^\vee)\cong\mathrm{H}^1(G/U,(V^U)^\vee)$.
\end{proof}

Now, let $\chi$ be a character of $F^\times$. We will consider $\Ext^1_{\GL_2(F)}(\boldsymbol{1},\chi\circ\det)$ and $\Ext^1_{D^\times}(\boldsymbol{1},\chi\circ\det)$. For the former, the cases when $\chi\neq\boldsymbol{1}$ and when $\ell\neq2$ with $\chi=\boldsymbol{1}$ have already been discussed in \cite[Lemma 4.4]{cui2024}. We will complete the case when $\ell=2$ with $\chi=\boldsymbol{1}$.

\begin{lem}
    Assume $\ell=2$. Let $\boldsymbol{1}$ be the trivial character of $F^\times$. Then $\mathrm{dim}_{\overline{\boldsymbol{F}_2}}\Ext^1_{\GL_2(F)}(\boldsymbol{1},\boldsymbol{1})=2.$
\end{lem}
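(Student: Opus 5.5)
We identify $\Ext^1_{\GL_2(F)}(\boldsymbol{1},\boldsymbol{1})$ with the smooth cohomology $\mathrm{H}^1(\GL_2(F),\overline{\boldsymbol F_2})$. Since the module is trivial, this is the space $\Hom_{cts}(\GL_2(F),\overline{\boldsymbol F_2})$ of continuous homomorphisms into the additive group. Any such homomorphism kills the commutator subgroup. For $F$ a local field, the commutator subgroup of $\GL_2(F)$ is $\SL_2(F)$; this holds for every $q$, including $q=2,3$, because $F$ is infinite. Hence every homomorphism factors through the determinant, and
\[
\Hom_{cts}(\GL_2(F),\overline{\boldsymbol F_2})\cong \Hom_{cts}(F^\times,\overline{\boldsymbol F_2})=\mathrm{H}^1(F^\times,\boldsymbol{1}).
\]
The determinant $\GL_2(F)\to F^\times$ is open and surjective, so continuity passes in both directions.

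It remains to compute $\dim \Hom(F^\times,\overline{\boldsymbol F_2})$. We argue exactly as in Proposition \ref{coE} (equivalently, as in the proof of Lemma \ref{l51}), using
\[
F^\times\simeq\langle\varpi_F\rangle\times \boldsymbol k_F^\times\times(1+\mathfrak p_F).
\]
Since $\ell=2\neq p$, the residue characteristic $p$ is odd. The pro-$p$ factor $1+\mathfrak p_F$ then contributes nothing. The factor $\langle \varpi_F\rangle\simeq\mathbb Z$ contributes one dimension. The cyclic group $\boldsymbol k_F^\times$ has order $q-1$, which is even, so it contributes one more dimension. The total is $2$, as claimed.

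The main point needing care is the first step, namely the identification of $\Ext^1$ in the smooth category with continuous homomorphisms, together with the abelianization of $\GL_2(F)$. For the $\Ext^1$ identification, an extension of $\boldsymbol{1}$ by $\boldsymbol{1}$ is a two-dimensional smooth representation $g\mapsto\begin{pmatrix}1&f(g)\\0&1\end{pmatrix}$. Here $f$ is a locally constant homomorphism, and split extensions correspond to $f=0$. For the abelianization, $\SL_2(F)$ is generated by unipotent elements. Moreover, every unipotent element $\begin{pmatrix}1&x\\0&1\end{pmatrix}$ is a commutator of a diagonal element $\mathrm{diag}(t,1)$ with a unipotent element, because $t-1$ can be chosen invertible. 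Consequently $[\GL_2(F),\GL_2(F)]=\SL_2(F)$.

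As an alternative, one could use the Hochschild--Serre sequence of Section 2 with $N=\SL_2(F)$. This route requires $\mathrm{H}^1(\SL_2(F),\boldsymbol 1)=0$, which is the same perfectness fact. The direct abelianization argument above is cleaner.
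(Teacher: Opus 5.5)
Your proof is correct and is essentially the paper's argument. The paper runs the Hochschild--Serre sequence for $1\to\SL_2(F)\to\GL_2(F)\to F^\times\to 1$ and kills $\mathrm{H}^1(\SL_2(F),\boldsymbol{1})$ by the perfectness of $\SL_2(F)$; in degree one with trivial coefficients this is the same as your observation that homomorphisms factor through $\GL_2(F)^{ab}\cong F^\times$, after which both arguments count $\dim\Hom(F^\times,\overline{\boldsymbol F_2})=1+1$ using $2\mid q-1$ (the paper merely asserts perfectness, whereas you prove the input you need, $[\GL_2(F),\GL_2(F)]=\SL_2(F)$, via commutators of $\mathrm{diag}(t,1)$ with unipotents).
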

\begin{proof}
    There exists a short sequence:
    \begin{align*}
        1\longrightarrow\SL_2(F)\longrightarrow\GL_2(F)\stackrel{\det}{\longrightarrow}F^\times\longrightarrow1
    \end{align*}
    By the Hochschild-Serre spectral sequence, we have 
    \begin{align*}
        0\longrightarrow\mathrm{H}^1(F^\times,\boldsymbol{1})\longrightarrow\mathrm{H}^1(\GL_2(F),\boldsymbol{1})\longrightarrow\mathrm{H}^1(\SL_2(F),\boldsymbol{1})^{F^\times}.
    \end{align*}
    For $\mathrm{H}^1(\SL_2(F),\boldsymbol{1})$, it equals $\Hom(\SL_2(F),\overline{\boldsymbol{F}_2})$. Since $\overline{\boldsymbol{F}_2}$ is Abelian, we consider $\SL_2(F)^{ab}$, the Abelianization of $\SL_2(F)$ which is the identity element since $\SL_2(F)$ is perfect. Then, $\mathrm{H}^1(\SL_2(F),\boldsymbol{1})=0$.

    Followed by the spectral sequence above and the same argument in Proposition \ref{coE}:
    \begin{align*}
        \Ext^1_{\GL_2(F)}(\boldsymbol{1},\boldsymbol{1})=\mathrm{H}^1(\GL_2(F),\boldsymbol{1})=\mathrm{H}^1(F^\times,\boldsymbol{1})=\Hom(F^\times,\overline{\boldsymbol{F}_2})
    \end{align*}
    and its dimension is 2 since $\ell\ |\ (q-1)$.
\end{proof}
    The rest of this section will focus on the proof of the following key lemma, which will be used in the next section. 
    \begin{lem}[Key lemma]\label{3.3}
         Assume $\ell\neq2$ and $p\neq2$. Let $\chi$ be a character of $F^\times$ and denote $\chi\circ\det$ the character of $D^\times
         $. Let $Z=F^\times$ be the center of $D^\times$.
         \begin{enumerate}
             \item If $\chi^2\neq\boldsymbol{1}$, then $\mathrm{dim}$\ $\Ext_{D^\times}^1(\boldsymbol{1},\chi\circ\det)=0$.
             \item If $\chi^2=\boldsymbol{1}$ and $\chi\neq\boldsymbol{1}$, then
             \begin{align*}
                 \mathrm{dim}\ \Ext_{D^\times}^1(\boldsymbol{1},\chi\circ\det)=\begin{cases}
            1,\ \mathrm{if}\ \chi\circ\det|_{C_{q+1}}=\boldsymbol{1}\ \mathrm{and}\ \ell|q+1;\\
            0,\ \mathrm{others,}
        \end{cases}
             \end{align*}
             where $C_{q+1}=\boldsymbol{k}_D^\times/\boldsymbol{k}_F^\times.$
             Moreover, $\mathrm{dim}\ \Ext_{D^\times}^1(\boldsymbol{1},\chi\circ\det)=\mathrm{dim}\ \Ext_{PD^\times}^1(\boldsymbol{1},\chi\circ\det)$.
             \item If $\chi=\boldsymbol{1}$, $\mathrm{dim}$\  $\Ext_{PD^\times}^1(\boldsymbol{1},\boldsymbol{1})=0$.
         \end{enumerate}
         Here $\Ext_{D^\times}^1$(resp. $\Ext_{PD^\times}^1$) is the $\Ext$ functor in the category of the smooth representation of $D^\times$(resp. $PD^\times$). 
     \end{lem}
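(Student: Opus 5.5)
We compute $\Ext^1_{D^\times}(\boldsymbol{1},\chi\circ\det)=\mathrm{H}^1(D^\times,\chi\circ\det)$ by peeling off the structure of $D^\times$ with the Hochschild--Serre five-term sequence. Recall the filtration $D^\times\supset\mathcal O_D^\times\supset 1+\mathfrak p_D$, where $1+\mathfrak p_D$ is a normal pro-$p$ subgroup. The quotients are $\mathcal O_D^\times/(1+\mathfrak p_D)\cong\boldsymbol{k}_D^\times\cong C_{q^2-1}$ and $D^\times/\mathcal O_D^\times\cong\mathbb Z$, the latter generated by a uniformizer $\varpi_D$ with $\varpi_D^2\in\varpi_F\mathcal O_F^\times$ and acting on $\boldsymbol{k}_D^\times$ by Frobenius $x\mapsto x^q$. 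Since $\chi\circ\det=\chi\circ\mathrm{Nrd}$ is trivial on the pro-$p$ part of $1+\mathfrak p_D$ only up to $\chi$ being tame there, we split on whether $\chi\circ\mathrm{Nrd}$ is trivial on $1+\mathfrak p_D$.
\begin{itemize}
\item If it is not trivial there, then the $(1+\mathfrak p_D)$-invariants of the coefficient module vanish. By Corollary \ref{copro-p} the higher cohomology of $1+\mathfrak p_D$ also vanishes, so the whole $\mathrm{H}^1$ is zero.
\item If it is trivial there, Corollary \ref{copro-p} and the five-term sequence give $\mathrm{H}^1(D^\times,\chi)\cong\mathrm{H}^1(D^\times/(1+\mathfrak p_D),\chi)$. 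The group $D^\times/(1+\mathfrak p_D)\cong\boldsymbol{k}_D^\times\rtimes\langle\varpi_D\rangle$ is a semidirect product of a finite cyclic group with $\mathbb Z$.
\end{itemize}

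For this semidirect product, I will use the five-term sequence for $N=\boldsymbol{k}_D^\times$. Since $\mathbb Z$ has cohomological dimension one, the relevant $\mathrm{H}^2(\mathbb Z,-)$ vanishes and we get a short exact sequence
\[
0\to \mathrm{H}^1(\mathbb Z,\chi^{N})\to \mathrm{H}^1(D^\times,\chi)\to \mathrm{H}^1(N,\chi)^{\mathbb Z}\to 0 .
\]
The first term is nonzero (one-dimensional) iff $\chi\circ\mathrm{Nrd}$ is trivial on $N$ and $\chi(\mathrm{Nrd}\,\varpi_D)=1$, i.e.\ iff $\chi\circ\mathrm{Nrd}$ is trivial. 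Because $\mathrm{Nrd}$ is surjective onto $F^\times$, this means $\chi=\boldsymbol{1}$. For the second term, $\mathrm{H}^1(C_{q^2-1},\chi)$ is nonzero only if $\chi|_N$ is trivial and $\ell\mid q^2-1$, by the argument of Lemma \ref{l51}. In that case it equals $\Hom(C_{q^2-1},\overline{\boldsymbol F_\ell})$. Here $\varpi_D$ acts by $f\mapsto \chi(\mathrm{Nrd}\varpi_D)\, f\circ\mathrm{Frob}^{-1}$, and $f\circ\mathrm{Frob}=qf$. So the invariants are nonzero iff $\chi(\mathrm{Nrd}\varpi_D)\,q\equiv1$, using that $\chi(\mathrm{Nrd}\,\varpi_D)=\chi(\varpi_F)^{\pm1}$ up to a unit factor, which is trivial once $\chi|_{\mathcal O_F^\times}$ is trivial.

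This settles the three parts as follows.
\begin{enumerate}
\item When $\chi^2\neq\boldsymbol{1}$, $\chi$ is trivial on $\mathcal O_F^\times$ (needed for triviality on $N$, since $\mathrm{Nrd}(\boldsymbol{k}_D^\times)=\boldsymbol{k}_F^\times$) and $\chi\neq\boldsymbol{1}$. Then the condition reads $\chi(\varpi_F)=q^{-1}$ with $\ell\mid q-1$ or similar. We must check this is incompatible with $\chi^2\ne\boldsymbol{1}$; I expect the parity between $q$ and $\chi(\varpi_F)$, together with $\ell\neq 2$ (so $q\equiv1$ forces $\chi(\varpi_F)=1$), to rule everything out except quadratic $\chi$. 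This bookkeeping is the main obstacle: correctly identifying the twisted Frobenius action on $\Hom(C_{q^2-1},\overline{\boldsymbol F_\ell})$ and matching it with the factor $C_{q+1}=\boldsymbol{k}_D^\times/\boldsymbol{k}_F^\times$, where homomorphisms to $\overline{\boldsymbol F_\ell}$ exist iff $\ell\mid q+1$.
\item For $\chi$ the unramified quadratic character, $\chi(\mathrm{Nrd}\varpi_D)=-1$, and the condition becomes $-q\equiv 1$, i.e.\ $\ell\mid q+1$. The surviving invariant homomorphisms are exactly those factoring through $C_{q+1}$, matching the stated condition. Ramified quadratic $\chi$ is nontrivial on units and gives zero. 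For the $PD^\times$ statement, I run the same sequence for $PD^\times=D^\times/F^\times$, which is profinite: $1+\mathfrak p_D$ maps to a normal pro-$p$ subgroup with quotient $C_{q+1}\rtimes C_2$, where $C_2$ acts by inversion. Corollary \ref{exth1} reduces to $\mathrm{H}^1$ of this dihedral group, and since $\ell\ne2$ the $C_2$-part contributes only invariants, giving the same answer.
\item For $\chi=\boldsymbol{1}$, we have $\mathrm{H}^1(PD^\times,\boldsymbol{1})=\Hom(D_{2(q+1)},\overline{\boldsymbol F_\ell})$. The abelianization of the dihedral group is a $2$-group, so this vanishes as $\ell\ne2$.
\end{enumerate}
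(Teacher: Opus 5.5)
Your proposal is correct in substance, but for parts (1) and (2) it takes a different route from the paper. The paper organizes everything around the centre. For (1) it applies the five-term sequence to $Z=F^\times\subset D^\times$. There $Z$ acts on $\chi\circ\det$ through the nontrivial character $\chi^2$, so $(\chi\circ\det)^Z$ and $\mathrm{H}^1(Z,\chi\circ\det)$ both vanish and (1) is immediate. For (2) the same sequence applies, with $\mathrm{H}^1(Z,\boldsymbol 1)^{PD^\times}=0$ because $PD^\times$ acts on it through the nontrivial character $\chi\circ\det$. This gives $\mathrm{H}^1(D^\times,\chi\circ\det)=\mathrm{H}^1(PD^\times,\chi\circ\det)$ at once, which is the ``moreover'', and only then does the paper compute on $D_{2(q+1)}=PD^\times/U_1$.

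You instead filter $D^\times$ itself by $1+\mathfrak p_D\subset\mathcal O_D^\times$ and work with the discrete group $\boldsymbol k_D^\times\rtimes\mathbb Z$. This yields a complete answer for every $\chi$: $\mathrm{H}^1(D^\times,\chi\circ\mathrm{Nrd})=0$ for ramified $\chi$. For unramified $\chi$ its dimension is $1+\epsilon$ if $\chi=\boldsymbol 1$ and $\epsilon$ otherwise, where $\epsilon=1$ exactly when $\ell\mid q^2-1$ and $q\,\chi(\varpi_F)=1$ in $\overline{\boldsymbol F_\ell}$. Your route buys generality: the formula itself uses neither $p\neq 2$ nor $\ell\neq 2$, so it also recovers Lemma \ref{p2quadratic} without a separate argument. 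The cost is the Frobenius-twist bookkeeping, which the central-character argument avoids entirely in (1), and an independent recomputation on $PD^\times$ for the ``moreover''.

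The step you left as an expectation in (1) closes in one line and needs no parity or $\ell\neq 2$ input. The surviving condition is $\ell\mid q^2-1$ (not $\ell\mid q-1$) together with $q\,\chi(\varpi_F)=1$. Hence $\chi(\varpi_F)^2=q^{-2}=1$ in $\overline{\boldsymbol F_\ell}$, and since $\chi$ is unramified this says $\chi^2=\boldsymbol 1$, contradicting the hypothesis.

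Two smaller points should also be made explicit.
\begin{enumerate}
\item To match your criterion with the statement of (2), note that $\chi\circ\det$ kills $\boldsymbol k_F^\times$, since $\mathrm{Nrd}(a)=a^2$ there. Hence $\chi\circ\det|_{C_{q+1}}=\boldsymbol 1$ iff $\chi|_{\boldsymbol k_F^\times}=\boldsymbol 1$, iff $\chi$ is unramified (using $p\neq 2$ and $\chi$ quadratic). Then $q\,\chi(\varpi_F)=-q=1$ is exactly $\ell\mid q+1$.
\item In the dihedral computation, ``the $C_2$-part contributes only invariants'' gives $\mathrm{H}^1(D_{2(q+1)},\chi\circ\det)=\mathrm{H}^1(C_{q+1},\chi\circ\det)^{C_2}$. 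You still have to check that all of $\Hom(C_{q+1},\overline{\boldsymbol F_\ell})$ is invariant. Indeed $s$ acts by $f\mapsto(\chi\circ\det)(s)\,f(r^{-1})=f$, because $(\chi\circ\det)(s)=-1$ when $\chi\neq\boldsymbol 1$ is trivial on $C_{q+1}$.
\end{enumerate}
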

     \begin{proof}
     \begin{enumerate}
         \item 
         Note that $\Ext_{D^\times}^1(\boldsymbol{1},\chi\circ\det)=\mathrm{H}^1(D^\times,\chi\circ\det)$. There exists a short sequence:
         \begin{align*}
             1\longrightarrow Z\longrightarrow D^\times\longrightarrow PD^\times\longrightarrow 1
         \end{align*}
         By the Hochschild-Serre spectral sequence, we have
         \begin{align*}\label{f1}
             0\longrightarrow\mathrm{H}^1(PD^\times,(\chi\circ\det)^{Z})\longrightarrow\mathrm{H}^1(D^\times,\chi\circ\det)\longrightarrow\mathrm{H}^1(Z,(\chi\circ\det))^{PD^\times}.\tag{3-1}
         \end{align*}
         Consider $(\chi\circ\det)^{Z}$. For any $v\in (\chi\circ\det)^{Z}$ and $z\in Z$, we have $$z\cdot v=\chi(\det(z))v=\chi^2(z)v=v.$$
         Hence $(\chi\circ\det)^{Z}$ is vanishing if and only if $\chi^2\neq\boldsymbol{1}$.\\

         When $\chi^2\neq\boldsymbol{1}$, $\mathrm{H}^1(PD^\times,(\chi\circ\det)^{Z})=0$. For $\mathrm{H}^1(Z,\chi\circ\det)$, given $\varphi\in\mathrm{H}^1(Z,\chi\circ\det)$, $\varphi$ satisfies $\varphi(z_1z_2)=\chi^2(z_1)\varphi(z_2)+\varphi(z_1),z_1,z_2\in F^\times.$ Because $\varphi(z_1z_2)=\varphi(z_2z_1),$ we have $$\chi^2(z_1)\varphi(z_2)+\varphi(z_1)=\chi^2(z_2)\varphi(z_1)+\varphi(z_2).$$ Then $\chi^2\neq \boldsymbol{1}$ implies the existence of $z_0$ which satisfies $\chi^2(z_0)\neq1$. Fix $z_1=z_0$. Therefore, $\forall z_2\in F^\times$, $\varphi(z_2)=(\chi^2(z_2)-1)\dfrac{\varphi(z_0)}{\chi^2(z_0)-1}$, and so $\varphi$ is a coboundry. Then $\mathrm{H}^1(Z,(\chi\circ\det))=0$ and so $\mathrm{H}^1(D^\times,\chi\circ\det)=0$.\\
\item 
         When $\chi^2=\boldsymbol{1}$ but $\chi\neq\boldsymbol{1}$, $\mathrm{H}^1(PD^\times,(\chi\circ\det)^{Z})=\mathrm{H}^1(PD^\times,\chi\circ\det).$ Let $O_D$ be the valuation ring of $D$, $\mathfrak{p}_D$ be its unique maximal ideal and $\boldsymbol{k}_D\cong \mathbb{F}_{q^2}$ be its residual field. We define
         \begin{align*}
             U:=O_D^\times Z/Z\cong O_D^\times/O_F^\times,\quad  U_1:=(1+\mathfrak{p}_D)Z/Z\cong1+\mathfrak{p}_D/1+\mathfrak{p}_F.
         \end{align*}
         Then $U/U_1=\boldsymbol{k}_D^\times/\boldsymbol{k}_F^\times\cong C_{q+1}$ which is a cyclic group of order $q+1$. There exists a short sequence:
         \begin{align*}
             1\longrightarrow U\longrightarrow PD^\times \stackrel{\mu}{\longrightarrow}\mathbb{Z}/2\mathbb{Z}\longrightarrow1
         \end{align*}
         where $\mu$ is the exponential valuation on $D^\times$. Therefore, for $PD^\times/U_1$, we have 
         \begin{align*}
             1\longrightarrow U/U_1\longrightarrow PD^\times/U_1 \stackrel{\bar{\mu}}{\longrightarrow}\mathbb{Z}/2\mathbb{Z}\longrightarrow1.
         \end{align*}
         We claim that the group $PD^\times/U_1$ is isomorphic to the dihedral group $D_{2(q+1)}=C_{q+1}\rtimes C_2$. Actually we just need to prove the existence of the section $s:\mathbb{Z}/2\mathbb{Z}\longrightarrow PD^\times/U_1$ such that $\bar{\mu}\circ s=\mathrm{id}.$ We define 
         \begin{align*}
             s:\mathbb{Z}/2\mathbb{Z}\longrightarrow PD^\times/U_1,\quad s(0)=1,s(1)=\bar{\Pi}
         \end{align*}
         where $\Pi$ is the prime element of $O_D$ and $\bar{\Pi}$ is the image of $\Pi Z$ in $PD^\times/U_1$. Then $s$ is a homomorphism and $\bar{\mu}\circ s=\mathrm{id}$.

         Now we have the exact sequence of groups:
        \begin{align*}
            1\longrightarrow U_1\longrightarrow PD^\times\longrightarrow D_{2(q+1)}\longrightarrow1.
        \end{align*}
        By the Hochschild-Serre spectral sequence, we have
        \begin{align*}\label{f2}
            0\longrightarrow \mathrm{H}^1(D_{2(q+1)},(\chi\circ\det)^{U_1})\longrightarrow\mathrm{H}^1(PD^\times,\chi\circ\det)\longrightarrow \mathrm{H}^1(U_1,\chi\circ\det)^{D_{2(q+1)}}.\tag{3-2}
        \end{align*}
        Since $U_1$ is a pro-$p$ group, $\chi$ is a quadratic character and $p\neq 2$, $\chi\circ\det$ is trivial on $U_1$ and $$\mathrm{H}^1(U_1,\chi\circ\det)=\Hom(U_1,\overline{\boldsymbol{F}_\ell})=0.$$ Then 
        $\mathrm{H}^1(PD^\times,\chi\circ\det)= \mathrm{H}^1(D_{2(q+1)},(\chi\circ\det)^{U_1})= \mathrm{H}^1(D_{2(q+1)},\chi\circ\det)$. By the Hochschild-Serre spectral sequence, one has the following exact sequence
        \begin{align*}\label{f3}
            0\longrightarrow\mathrm{H}^1(C_2,(\chi\circ\det)^{C_{q+1}})\longrightarrow \mathrm{H}^1(D_{2(q+1)},\chi\circ\det)\\\longrightarrow\mathrm{H}^1(C_{q+1},\chi\circ\det)^{C_2}\longrightarrow\mathrm{H}^2(C_2,(\chi\circ\det)^{C_{q+1}})\tag{3-3}
        \end{align*}
        
        If $\chi\circ\det|_{C_{q+1}}\neq\boldsymbol{1}$, given any $ a \in (\chi\circ\det)^{C_{q+1}}$ and $r\in C_{q+1}$, we have $$r\cdot a=\chi\circ\det(r)a=a,$$ which implies that $a$ must be zero. Then $\mathrm{H}^1(C_2,(\chi\circ\det)^{C_{q+1}})$=0. Since $C_{q+1}$ is a cyclic group and $\chi\circ\det|_{C_{q+1}}\neq\boldsymbol{1}$, $\mathrm{H}^1(C_{q+1},\chi\circ\det)$=0. In this case, we have $\mathrm{H}^1(D_{2(q+1)},\chi\circ\det)$=0.

        If $\chi\circ\det|_{C_{q+1}}=\boldsymbol{1}$, 
        then $\chi\circ\det|_{C_2}\neq\boldsymbol{1}$. Otherwise $\chi$ is trivial,  contradicting the condition that $\chi\neq\mathbf{1}$. In this case, 
        \begin{equation}\label{first:van}\mathrm{H}^1(C_2,(\chi\circ\det)^{C_{q+1}})=\mathrm{H}^1(C_2,\chi\circ\det)=0\tag{3-4}\end{equation} since $C_2$ is also a cyclic group and $\chi\circ\det|_{C_2}\neq\boldsymbol{1}$. It remains to determine $\mathrm{H}^1(C_{q+1}, \chi \circ \det)^{C_2}.$
Since \(C_{q+1}\) acts trivially on the coefficient module \(\overline{\boldsymbol{F}_\ell}\), we have that
$$
\mathrm{H}^1(C_{q+1}, \chi \circ \det) \cong \Hom(C_{q+1}, \overline{\boldsymbol{F}_\ell}).$$

For \(f \in \operatorname{Hom}(C_{q+1}, \overline{\boldsymbol{F}_\ell})\) and \(s \in C_2\) of order $2$, the action of \(C_2\) is given by
\[
(s \cdot f)(r) = (\chi \circ \det)(s)f(s^{-1} r s)=(\chi \circ \det)(s)f(r^{-1})= -(\chi \circ \det)(s) f(r), \quad \forall r \in C_{q+1}.
\]
Because \(\chi \circ \det|_{C_2} \neq \boldsymbol{1}\), we have$(\chi \circ \det)(s) = -1.$
Therefore, $(s \cdot f)(r) = f(r)$
and so every \(f \in \Hom(C_{q+1}, \overline{\boldsymbol{F}_\ell})\) is invariant under \(C_2\). Hence
$\operatorname{Hom}(C_{q+1}, \overline{\boldsymbol{F}_\ell})^{C_2}
= \operatorname{Hom}(C_{q+1}, \overline{\boldsymbol{F}_\ell}).$
Now we distinguish two cases: $\ell\nmid\ q+1$ and $\ell|\ q+1$.
\begin{itemize}[leftmargin=0pt]
\item If \(\ell \nmid q+1\), then for any \(f \in \operatorname{Hom}(C_{q+1}, \overline{\boldsymbol{F}_\ell})\), we have
$(q+1) f(r) = f(r^{q+1}) = f(1) = 0$
which implies \(f(r) = 0\). Thus
$\operatorname{Hom}(C_{q+1}, \overline{\boldsymbol{F}_\ell}) = 0$
and consequently
\[
H^1(D_{2(q+1)}, \chi \circ \det) = 0.
\]

\item If \(\ell \mid q+1\), then \(\operatorname{Hom}(C_{q+1}, \overline{\boldsymbol{F}_\ell})\) is a one-dimensional \(\overline{\boldsymbol{F}_\ell}\)-vector space.
Moreover, in the case when \(\ell \mid q+1\) and \(\chi \circ \det|_{C_2} \neq \boldsymbol{1}\), 
we have \(M = \overline{\boldsymbol F_\ell}\) with \(C_2\) acting via
$$s \cdot m = (\chi \circ \det)(s) \, m = -m, \forall m \in M.$$
Since \(\ell \neq 2\), we have
\[
M^{C_2} = \{ m \in M \mid m = -m \} = 0.
\]
For the cyclic group \(C_2\), the standard cohomology formula gives the following isomorphism
\[
\mathrm{H}^2(C_2, M) \cong \mathrm{H}_T^0(C_2,M)= M^{C_2} / (1 + s)M.
\]
where $\mathrm{H_T^0(C_2,M)}$ denotes the Tate cohommology group (see \cite[Section 2]{milneCFT}).  Since \(M^{C_2} = 0\), it follows immediately that
$\mathrm{H}^2(C_2, M) = 0.$
Thus, by the spectral sequence (\ref{f3})
the last term $\mathrm{H}^2(C_2,(\chi\circ\det)^{C_{q+1}})$ vanishes. Together with \eqref{first:van}, it forces that
\[
\mathrm{dim}\ \mathrm{H}^1(D_{2(q+1)}, \chi\circ\det)=\mathrm{dim}\ \mathrm{H}^1(C_{q+1},\chi\circ\det)^{C_2}=\mathrm{dim}\ \Hom(C_{q+1},\overline{\boldsymbol{F}_\ell}) = 1.
\]
\end{itemize}

        For $\mathrm{H}^1(Z,(\chi\circ\det))^{PD^\times}$, it equals $\Hom(Z,\overline{\boldsymbol{F}_\ell})^{PD^\times}$ due to the fact that $\chi^2=\boldsymbol{1}$. Then given any $f
        \in \Hom(Z,\overline{\boldsymbol{F}_\ell})$ and $a\in PD^\times$, $$a\cdot f(z)=\chi\circ\det(a)f(z)=f(z). $$ Since $\chi\circ\det$ is not trivial, $f=0$ which means $\mathrm{H}^1(Z,(\chi\circ\det))^{PD^\times}=0$ . 
        
    In a short summary, when $\chi^2=\boldsymbol{1}$ but $\chi\neq\boldsymbol{1}$,
    \begin{align*}
        \mathrm{dim}\ \Ext_{D^\times}^1(\boldsymbol{1},\chi\circ\det)=\begin{cases}
            1,\ \mathrm{if}\ \chi\circ\det|_{C_{q+1}}=\boldsymbol{1}\ \mathrm{and}\ \ell|q+1;\\
            0,\ \mathrm{others.}
        \end{cases}
    \end{align*}\\
\item 
        When $\chi=\boldsymbol{1}$, we have $\Ext_{PD^\times}^1(\boldsymbol{1},\boldsymbol{1})=\mathrm{H}^1(PD^\times,\boldsymbol{1})$. By the exact sequence (\ref{f2}), $\mathrm{H}^1(U_1,\boldsymbol{1})=0$. Then
        \begin{align*}
\mathrm{H}^1(PD^\times,\boldsymbol{1})=\mathrm{H}^1(D_{2(q+1)},\boldsymbol{1})=\Hom(D_{2(q+1)},\overline{\boldsymbol{F}_\ell})=\Hom(D_{2(q+1)}^{ab},\overline{\boldsymbol{F}_\ell})
        \end{align*}
    where $D_{2(q+1)}^{ab}\cong C_2\times C_2$ is the  Abelianization of $D_{2(q+1)}$. Since $\ell\neq 2$, it is zero.
     \end{enumerate}This finishes the proof.
    \end{proof}

    \section{The Principal Series Representations}
    This section is devoted to the study of $D^\times$-distinguished  $\ell$-modudar principal series representations of GL$_2(E)$. Let $G=\GL_2(E)$ in this section.
    
    We first give some lemmas that do not depend on the choices of \(\ell\) and \(p\).

    \begin{lem}\label{3.1}
    The number of double cosets of $\GL_2(E)$ with respect to $D^\times$ and $B_2(E)$ is $\mathrm{1}$, which means $\GL_2(E)=B_2(E)D^\times$.
    \end{lem}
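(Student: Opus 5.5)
We will reduce the claim $\GL_2(E)=B_2(E)D^\times$ to a statement about the action of $\GL_2(E)$ on $\mathbb{P}^1(E)=B_2(E)\backslash\GL_2(E)$. The double cosets $B_2(E)\backslash\GL_2(E)/D^\times$ are in bijection with the $D^\times$-orbits on $\mathbb{P}^1(E)$. It therefore suffices to show that $D^\times$, embedded via the matrices in Section 2, acts transitively on $\mathbb{P}^1(E)$. Equivalently, we will show that the orbit of the point $[0:1]$ (the row vector $(0,1)$, with $\GL_2(E)$ acting on row vectors on the right, so the stabilizer of the line is $B_2(E)$ up to the usual transpose convention) is everything.

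\emph{Computing the orbit.} For $g=\begin{pmatrix}\alpha&b\beta\\ \sigma(\beta)&\sigma(\alpha)\end{pmatrix}$, the row vector $(0,1)g=(\sigma(\beta),\sigma(\alpha))$. As $(\alpha,\beta)$ ranges over $E^2\setminus\{0\}$ subject to $\Nm(\alpha)-b\Nm(\beta)\neq0$, we obtain the points $[\sigma(\beta):\sigma(\alpha)]$. The key step is to check that this reduced-norm condition is automatic for every nonzero pair. Suppose $\Nm(\alpha)=b\Nm(\beta)$ with $(\alpha,\beta)\neq 0$. Then $\beta\neq0$, so $b=\Nm(\alpha/\beta)\in\Nm_{E/F}(E^\times)$. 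This contradicts the fact that $D$ is a division algebra: the quaternion algebra $(a,b)_F$ splits exactly when $b$ is a norm from $E=F(\sqrt a)$. Hence every nonzero element $x+yi+zj+wk$ has nonzero reduced norm.

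\emph{Conclusion.} By the previous step, $(\sigma(\beta),\sigma(\alpha))$ runs over all of $E^2\setminus\{0\}$, so the orbit is all of $\mathbb{P}^1(E)$. Transitivity follows, and hence $\GL_2(E)=B_2(E)D^\times$, with exactly one double coset. If the paper's convention uses upper-triangular $B_2$ acting on column vectors instead, the same computation with the first column $(\alpha,\sigma(\beta))^{T}$ gives the same conclusion.

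The only non-formal point is the norm argument in the second step, namely identifying $\Nm(\alpha)-b\Nm(\beta)$ with the reduced norm and invoking the norm criterion for the quaternion algebra. As a sanity check, the stabilizer $D^\times\cap B_2(E)$ consists of the elements with $\beta=0$, that is $E^\times$. This gives $D^\times/E^\times\cong\mathbb{P}^1(E)$, which is consistent with the compactness of both sides.
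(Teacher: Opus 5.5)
Your proof is correct and follows the same route as the paper: you identify the double cosets with $D^\times$-orbits on $\mathbb{P}^1(E)$ and prove transitivity. The only difference is presentational. The paper exhibits explicit elements such as $\begin{pmatrix}1&t\\ b^{-1}\sigma(t)&1\end{pmatrix}$ sending $0$ to $t$, whereas you compute the full orbit of a base point. Your argument that $\Nm(\alpha)-b\Nm(\beta)\neq0$ for every $(\alpha,\beta)\neq0$, via $b\notin\Nm_{E/F}(E^\times)$, makes explicit the fact the paper uses silently to know its matrices lie in $D^\times$.
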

    \begin{proof}
        We have $\GL_2(E)/B_2(E)\simeq\mathbb{P}^1(E)$, so we only need to consider the number of orbits of the action of $D^\times$ on $\mathbb{P}^1(E)$.The action of $D^\times$ on $\mathbb{P}^1(E)$ is defined as
        \begin{align*}
            \begin{pmatrix}
                \alpha&b\beta\\
            \sigma(\beta)&\sigma(\alpha)
            \end{pmatrix}\cdot x:=\dfrac{\alpha x+b\beta}{\sigma(\beta)x+\sigma(\alpha)}
        \end{align*}
     We claim that the action is transitive, that is, $\forall x\in\mathbb{P}^1(E)$,there exists $d\in D^\times$ such that $d\cdot0=x$.

     If $x=\infty$,let $d=\begin{pmatrix}
         0&b\\
         1&0
     \end{pmatrix}\in D^\times$,then $d\cdot0=\dfrac{b}{0}=\infty$.

     If $x=t\in E$,let $d=\begin{pmatrix}
         1&t\\
         b^{-1}\sigma(t)&1
     \end{pmatrix}\in D^\times$, then $d\cdot0=\dfrac{d}{1}=t$.
     \end{proof}

     Let $\chi_1,\chi_2$ be normalized characters of $E^\times$. Denote by $\pi(\chi_1,\chi_2)$ the principal series representation of $\GL_2(E)$ induced from $(\chi_1\boxtimes\chi_2)$, that is $\pi(\chi_1,\chi_2)=\Ind_{B_2(E)}^G(\chi_1\boxtimes\chi_2)$.\\

     \begin{lem}\label{3.2}
         Let $\pi=\pi(\chi_1,\chi_2)$ be a principal series representation of $\GL_2(E)$. Then $\pi$ is distinguished by $D^\times$ if and only if $\chi_1\chi_2^\sigma=\boldsymbol{1}$.
         Moreover,we have
         \begin{align*}
            \mathrm{dim}\ \Hom_{D^\times}(\pi,\boldsymbol{1})=\begin{cases}
                 1,\  if\ \chi_1\chi_2^\sigma=\boldsymbol{1},\\
                 0,\  otherwise.
             \end{cases}
         \end{align*}
     \end{lem}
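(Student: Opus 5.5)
By Lemma \ref{3.1}, $\GL_2(E)=B_2(E)D^\times$ is a single double coset. The plan is therefore a one-orbit Mackey argument, and the problem reduces to Frobenius reciprocity for the stabilizer of the base point.

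First I restrict $\pi=\Ind_{B_2(E)}^G(\chi_1\boxtimes\chi_2)$ to $D^\times$. Since there is only one double coset, Mackey theory gives
\[
\pi|_{D^\times}\cong \Ind_{B_2(E)\cap D^\times}^{D^\times}\bigl((\chi_1\boxtimes\chi_2)|_{B_2(E)\cap D^\times}\bigr),
\]
and this holds over $\overline{\boldsymbol F_\ell}$ as well as over $\mathbb C$. The reason is that restriction of functions to $D^\times$ is a bijection onto the induced space, and $D^\times/(B_2(E)\cap D^\times)\cong \mathbb P^1(E)$ is compact, so no support issues arise. If the normalized induction is used, the modulus factor $\delta_B^{1/2}$ restricted to $B_2(E)\cap D^\times$ must be tracked; I expect it to be trivial because that intersection is contained in the diagonal torus.

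Next I compute the stabilizer. An element $\begin{pmatrix}\alpha&b\beta\\ \sigma(\beta)&\sigma(\alpha)\end{pmatrix}$ is upper triangular exactly when $\beta=0$. Hence
\[
B_2(E)\cap D^\times=\left\{\begin{pmatrix}\alpha&0\\0&\sigma(\alpha)\end{pmatrix}: \alpha\in E^\times\right\}\cong E^\times.
\]
This group lies in the torus, so $\delta_B$ is trivial on it. The character $\chi_1\boxtimes\chi_2$ restricts to $\alpha\mapsto \chi_1(\alpha)\chi_2(\sigma(\alpha))=\chi_1\chi_2^\sigma(\alpha)$.

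Finally I apply Frobenius reciprocity. Because $D^\times/E^\times$ is compact, compact induction and full induction agree, and we get
\[
\Hom_{D^\times}\bigl(\Ind_{E^\times}^{D^\times}\chi_1\chi_2^\sigma,\boldsymbol 1\bigr)\cong \Hom_{E^\times}\bigl(\chi_1\chi_2^\sigma\otimes\delta,\boldsymbol 1\bigr).
\]
Here $\delta$ is the ratio of modular characters of $D^\times$ and $E^\times$, which is trivial since both groups are unimodular. The right-hand side has dimension $1$ when $\chi_1\chi_2^\sigma=\boldsymbol 1$ and dimension $0$ otherwise.

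The main obstacle is to justify this form of Frobenius reciprocity for $\overline{\boldsymbol F_\ell}$-coefficients. It needs an invariant measure on the compact quotient $D^\times/E^\times$ with values in $\overline{\boldsymbol F_\ell}$, and this requires the pro-order of the relevant compact open subgroups to be invertible in $\overline{\boldsymbol F_\ell}$. I would handle this with Vign\'eras's duality $(\Ind_K^G\rho)^\vee\cong \Ind_K^G(\rho^\vee\otimes\delta)$ for closed $K$ with compact quotient, which holds for $\ell\neq p$ once one chooses a compact open pro-$p$ subgroup of $D^\times$ (for instance $1+\mathfrak p_D$). The quotient $(1+\mathfrak p_D)\backslash D^\times/E^\times$ is finite, and the $E^\times$-stabilizers are pro-$p$ up to factors that are handled by the explicit orbit sum. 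If a concrete verification is preferred, I would write down the functional $f\mapsto \sum_{x\in (1+\mathfrak p_D)\backslash D^\times/E^\times}c_x f(x)$ using a pro-$p$ Haar measure, and check directly that it is $D^\times$-invariant exactly when $\chi_1\chi_2^\sigma=\boldsymbol 1$.
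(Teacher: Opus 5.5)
Your proposal is the paper's argument: a single double coset by Lemma \ref{3.1}, Mackey restriction to $\Ind_{E^\times}^{D^\times}\chi_1\chi_2^\sigma$ via $B_2(E)\cap D^\times=\{\mathrm{diag}(\alpha,\sigma(\alpha))\}$, and Frobenius reciprocity for the compact quotient $E^\times\backslash D^\times$, and you additionally justify the $\overline{\boldsymbol F_\ell}$-version (Haar measures from pro-$p$ subgroups, Vign\'eras's duality) that the paper uses without comment. One correction: $\delta_B$ is not trivial on the diagonal torus, since $\delta_B(\mathrm{diag}(a,d))=|a/d|_E$, but it is trivial on $\{\mathrm{diag}(\alpha,\sigma(\alpha))\}$ because $|\sigma(\alpha)|_E=|\alpha|_E$, so your conclusion stands.
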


     \begin{proof}
         By Lemma \ref{3.1} and Mackey Theory, we have $\Res_{D^\times}^G\Ind_{B_2(E)}^G(\chi_1\boxtimes\chi_2)=\Ind_{D^\times\cap B_2(E)}^{D^\times}\chi_1\boxtimes \chi_2$. Then
         \begin{align*}
             \Hom_{D^\times}(\pi,\boldsymbol{1})=\Hom_{D^\times}(\Ind_{D^\times\cap B_2(E)}^{D^\times}\chi_1\boxtimes\chi_2,\boldsymbol{1})=\Hom_{D^\times\cap B_2(E)}(\chi_1\boxtimes\chi_2,\boldsymbol{1}).
         \end{align*}
         Because $B_2(E)\cap D^\times=\left\{\begin{pmatrix}
             \alpha&\\
             &\sigma(\alpha)
         \end{pmatrix}|\alpha\in E,\Nm_{E/F}(\alpha)\neq 0\right\}\simeq E^\times$, one has $$\mathrm{dim}\ \Hom_{D^\times}(\pi,\boldsymbol{1})=\begin{cases}
                 1,\  if\ \chi_1\chi_2^\sigma=\boldsymbol{1},\\
                 0,\  otherwise.
             \end{cases}$$
             This finishes the proof.
     \end{proof}

 Assume $\ell\neq p$. 
 We will divide the proof of Theorem \ref{mainthm1.1} into two cases: $\ell$ is odd and $\ell=2$.
     \subsection{The case $\ell\neq2$.}
    \ 
    \newline
\indent By Lemma \ref{3.2}, we have the results for all irreducible principal series. We refer to \cite{vig1989} for the following well-known facts about $\pi(\chi_1,\chi_2)$. The principal series representation $\pi(\chi_1,\chi_2)$ is reducible if and only if $\chi_1\chi_2^{-1}=\nu^{\pm1}$, where $\nu$ is the absolute valuation of $E^\times$. Let $\chi$ be a character of $E^\times $. We only need to study the irreducible subquotients of $\pi(\chi\nu^{1/2},\chi\nu^{-1/2})$. 

If $q_E^2\not\equiv1\pmod \ell$, then $\pi(\chi\nu^{1/2},\chi\nu^{-1/2})$ has length 2 with irreducible subquotients $\chi$ and $\mathrm{St}_\chi$. The exact sequence of $\GL_2(E)$-module is:
 \begin{align*}
0\longrightarrow\mathrm{St}_\chi\longrightarrow\pi(\chi\nu^{1/2},\chi\nu^{-1/2})\longrightarrow\chi\circ\det\longrightarrow0
\end{align*}
Taking the functor $\Hom_{D^\times}(-,\boldsymbol{1})$, we have:
\begin{align*}\label{exactsequence St}
             0\longrightarrow&\Hom_{D^\times}(\chi\circ\det,\boldsymbol{1})\longrightarrow\Hom_{D^\times}(\pi(\chi\nu^{1/2},\chi\nu^{-1/2}),\boldsymbol{1})\\&\longrightarrow\Hom_{D^\times}(\mathrm{St}_\chi,\boldsymbol{1})\longrightarrow\Ext^1_{D^\times}(\chi\circ\det,\boldsymbol{1})\tag{4-1}
\end{align*}

If $q_E\equiv-1\pmod \ell$, then $\pi(\chi\nu^{1/2},\chi\nu^{-1/2})$ has length 3 with irreducible subquotients $\chi$, $\chi\nu_2$ and $\mathrm{Sp}_\chi$. The exact sequences of $\GL_2(E)$-module are:
\begin{align*}
             0\longrightarrow J_\chi\longrightarrow \pi(\chi\nu^{1/2},\chi\nu^{-1/2})\longrightarrow \chi\circ\det\longrightarrow0
\end{align*}
 and
\begin{align*}
             0\longrightarrow(\chi\circ\det)\nu_2\longrightarrow J_\chi\longrightarrow \mathrm{Sp}_\chi\longrightarrow0
\end{align*}
Taking the functor $\Hom_{D^\times}(-,\boldsymbol{1})$, we have:
\begin{align*}\label{f34}
             0\longrightarrow\Hom_{D^\times}(\chi\circ\det,\boldsymbol{1})&\longrightarrow\Hom_{D^\times}(\pi(\chi\nu^{1/2},\chi\nu^{-1/2}),\boldsymbol{1})\\
             \longrightarrow \Hom_{D^\times}(J_\chi,\boldsymbol{1})&\longrightarrow\Ext^1_{D^\times}(\chi\circ\det,\boldsymbol{1})\longrightarrow\Ext_{D^\times}^1(\pi(\chi\nu^{1/2},\chi\nu^{-1/2}),\boldsymbol{1})  \tag{4-2}
\end{align*}
and
 \begin{align*}\label{f35}
             0\longrightarrow\Hom_{D^\times}(\Sp_\chi,\boldsymbol{1})\longrightarrow\Hom_{D^\times}(J_\chi,\boldsymbol{1})\longrightarrow\Hom_{D^\times}((\chi\circ\det)\nu_2,\boldsymbol{1}).\tag{4-3}
\end{align*}

If $q_E\equiv1\pmod\ell$, then $\nu$ is trivial and $\pi(\chi,\chi)$ is the direct sum of $\chi$ and $\mathrm{St}_\chi$. Here $\mathrm{Sp}_\chi$ or $\mathrm{St}_\chi$ is the unique generic subquotient representation. 

We first introduce a lemma about $\Ext_{D^\times}^1(\pi(\chi\nu^{1/2},\chi\nu^{-1/2}),\boldsymbol{1})$.

\begin{lem}\label{l4.2}
    Suppose $\ell\neq p$, $\pi=\pi(\chi\nu^{1/2},\chi\nu^{-1/2})$, $\chi$ is a character of $E^\times$ and $\chi^2|_{F^\times}=\boldsymbol{1}$. Then
    $\pi(\chi\nu^{1/2},\chi\nu^{-1/2})|_{D^\times}$ can be seen as a representation of $PD^\times$ and
    \begin{align*}
       \dim\ \Ext_{PD^\times}^1(\pi,\boldsymbol{1})=\begin{cases}
        1,& \mathrm{if\ }E/F\ \mathrm{is\ unramified}, \chi\chi^\sigma=\boldsymbol{1}\ \mathrm{and}\  \ell|q+1;\\
        1,& \mathrm{if\ }E/F\ \mathrm{is\ ramified\ and}\ \ell=2;\\
        0,& \mathrm{otherwise}.
    \end{cases}
    \end{align*}
    In particular, the lemma also holds when $p=2$.
\end{lem}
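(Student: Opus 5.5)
The plan is to reduce everything to a cohomology computation on the compact abelian group $E^\times/F^\times$ via Mackey theory and Shapiro's lemma, and then to invoke Corollary \ref{coEF}.

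\emph{Step 1: restriction to $PD^\times$.} By Lemma \ref{3.1} and the Mackey argument of Lemma \ref{3.2}, $\pi|_{D^\times}\cong \Ind_{E^\times}^{D^\times}\eta$. Here $E^\times$ is embedded as the diagonal torus $\{\mathrm{diag}(\alpha,\sigma(\alpha))\}=B_2(E)\cap D^\times$, and $\eta$ is the restriction of $\chi\nu^{1/2}\boxtimes\chi\nu^{-1/2}$, together with the modulus factor, to this torus. Since $|\alpha|_E=|\sigma(\alpha)|_E$, all the $\nu$-factors and the modulus character cancel on the torus. Hence $\eta(\alpha)=\chi(\alpha)\chi(\sigma(\alpha))$, that is, $\eta=\chi\chi^\sigma$. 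On $F^\times$ this is $\chi^2|_{F^\times}=\boldsymbol{1}$, so the centre $F^\times$ acts trivially. Thus $\pi|_{D^\times}$ factors through $PD^\times$, and
\[
\pi|_{PD^\times}\cong \Ind_{E^\times/F^\times}^{PD^\times}\eta .
\]

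\emph{Step 2: Shapiro's lemma.} The group $PD^\times$ is compact, so $\Ind=\mathrm{c\text{-}Ind}$. Compact induction is exact and left adjoint to the exact restriction functor. Therefore Frobenius reciprocity extends to $\Ext^1$, either by Yoneda extensions or by the standard Shapiro argument, giving
\[
\Ext^1_{PD^\times}(\pi,\boldsymbol{1})\cong \Ext^1_{E^\times/F^\times}(\eta,\boldsymbol{1})\cong \mathrm{H}^1(E^\times/F^\times,\eta^{-1}).
\]
I expect this step to need the most care. The mod-$\ell$ smooth category of a compact group need not be semisimple, so one must make sure the adjunction genuinely passes to $\Ext^1$. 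I would argue directly: an extension of $\Ind\eta$ by $\boldsymbol{1}$ restricts to $E^\times/F^\times$, and one pulls it back along the unit $\eta\to\Ind\eta$. Conversely, one induces an extension and pushes it out along the counit. Alternatively, I would note that both functors are exact and that c-Ind preserves projectives, which suffices to derive the adjunction.

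\emph{Step 3: computing $\mathrm{H}^1$.} The character $\eta$ is $\sigma$-invariant and trivial on $F^\times$. Since $\alpha\sigma(\alpha)\in F^\times$, we get $\eta(\alpha)^2=\eta(\alpha)\eta(\sigma\alpha)=\eta(\alpha\sigma(\alpha))=1$, so $\eta$ is at most quadratic.

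If $\eta\neq\boldsymbol{1}$, then $\mathrm{H}^1(E^\times/F^\times,\eta^{-1})=0$. This follows from the commutativity argument used in Lemma \ref{l51}: the group is abelian, so any cocycle satisfies $f(b)=\frac{\eta^{-1}(b)-1}{\eta^{-1}(a)-1}f(a)$ for a fixed $a$ with $\eta(a)\neq1$, and is therefore a coboundary.

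If $\eta=\boldsymbol{1}$, i.e.\ $\chi\chi^\sigma=\boldsymbol{1}$, Corollary \ref{coEF} gives dimension $1$ exactly when either $E/F$ is unramified and $\ell\mid q+1$, or $E/F$ is ramified and $\ell=2$, and $0$ otherwise.

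In the ramified case with $\ell=2$, the quadratic character $\eta$ takes values in $\overline{\boldsymbol F_2}^\times$, which has no elements of order $2$. So $\eta$ is automatically trivial, and the condition $\chi\chi^\sigma=\boldsymbol{1}$ can be dropped from the statement there. This yields the three cases claimed. No hypothesis on $p$ was used beyond Corollary \ref{coEF}, which already covers $p=2$, so the lemma also holds when $p=2$.
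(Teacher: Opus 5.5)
Your overall route (Mackey, then Shapiro over $E^\times/F^\times$, then Corollary \ref{coEF}) reaches the right answer, and Steps 1 and 3 are fine. The justification you give for Step 2, however, is wrong. Write $G=PD^\times$ and $H=E^\times/F^\times$. Then $H$ is closed of infinite index in $G$, and for such $H$ the functor $\mathrm{c\text{-}Ind}_H^G$ is \emph{not} left adjoint to restriction. A small example already shows this: for $G=\mathbb Z_p$, $H=\{1\}$ and $\ell\neq p$, one has $C^\infty(G)\cong\bigoplus_\psi\psi$ over all finite-order characters, so $\Hom_G(C^\infty(G),C^\infty(G))\cong\prod_\psi\overline{\boldsymbol F_\ell}$, whereas $\Hom_{\{1\}}(\mathbf 1,C^\infty(G))\cong\bigoplus_\psi\overline{\boldsymbol F_\ell}$. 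Correspondingly, there is no unit $\eta\to\Ind_H^G\eta$ given by functions supported on $H$, since such functions are not locally constant. So neither your Yoneda pull-back/push-out argument nor ``deriving the adjunction'' works as written.

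What does hold is the duality $(\mathrm{c\text{-}Ind}_H^G P)^\vee\cong\Ind_H^G(P^\vee)$. Both groups are unimodular and carry $\overline{\boldsymbol F_\ell}$-valued Haar measures since $\ell\neq p$. This gives an isomorphism $\Hom_G(\mathrm{c\text{-}Ind}_H^G P,\mathbf 1)\cong\Hom_H(P,\mathbf 1)$, natural in the smooth representation $P$, which is what Lemma \ref{3.2} really uses. To pass to $\Ext^1$ you need one more argument; either of the following works.

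\emph{First option.} Resolve $\eta$ by projectives of the form $\mathrm{c\text{-}Ind}_K^H\tau$ with $K=H\cap U_1$. Their induction to $G$ is $\mathrm{c\text{-}Ind}_{U_1}^G(\mathrm{c\text{-}Ind}_K^{U_1}\tau)$, which is projective because $U_1$ is open and pro-$p$. Now apply the natural Hom-isomorphism termwise.

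\emph{Second option.} Use $\Ext^1_G(V,\mathbf 1)\cong\Ext^1_G(\mathbf 1,V^\vee)$ for the admissible $V=\pi|_{PD^\times}$; this follows by dualizing extensions, since smooth duality is exact as $\ell\neq p$. Then use $V^\vee\cong\Ind_H^G\eta^{-1}$ and Shapiro's lemma in the correct direction: $\Ind_H^G$ is right adjoint to the exact restriction functor, so it preserves injectives.

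With that repair your proof is a valid alternative to the paper's. The paper avoids infinite-index induction by first passing to $U_1$-invariants (Proposition \ref{extfix}, Corollary \ref{exth1}). It then applies Shapiro's lemma only over the finite group $D_{2(q+1)}$, where induction and coinduction coincide. The price is identifying $\pi^{U_1}$ as induced from the image $C_{q+1}$ or $C_2$ of $E^\times/F^\times$, and noting that $\pi^{U_1}=0$ when $\chi\chi^\sigma$ is nontrivial on $H\cap U_1$. Your version lands directly on $\mathrm{H}^1(E^\times/F^\times,\eta^{-1})$ and treats nontrivial $\eta$ uniformly by the commutativity argument. It also gets the case $p=2$ for free from Corollary \ref{coEF}.
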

\begin{proof}
    Followed by Corollary \ref{exth1}, since $PD^\times/U_1\simeq D_{2(q+1)}$ is a finite group, we have
    \begin{align*}
        \Ext_{PD^\times}^1(\pi,\boldsymbol{1})\cong \mathrm{H}^1(D_{2(q+1)},(\pi^{U_1})^\vee).
    \end{align*}
    where $(\pi^{U_1})^\vee$ denotes the linear dual space of $\pi^{U_1}$.
    Then we will first determine $\pi^{U_1}$. According to the proof of Lemma \ref{3.2} and $\chi^2|_{F^\times}=\boldsymbol{1}$, $\Res_{D^\times}^G\pi=\Ind_{E^\times}^{D^\times}\chi\chi^\sigma=\Ind_{E^\times/F^\times}^{PD^\times}\chi\chi^\sigma$. Therefore, $\pi^{U_1}=(\Res_{D^\times}^G\pi)^{U_1}=(\Ind_{E^\times/F^\times}^{PD^\times}\chi\chi^\sigma)^{U_1}$. If $\pi^{U_1}=0$, $\Ext_{PD^\times}^1(\pi,\boldsymbol{1})=0$. Now we suppose $\pi^{U_1}\neq0$. The elements in $\pi^{U_1}$ are of the form:
    \begin{align*}
        \{f:PD^\times\rightarrow\overline{\boldsymbol{F}_\ell}|f(tg)=\chi\chi^\sigma(t) f(g),f(gu)=f(g),\forall t\in E^\times/F^\times,u\in U_1,g\in PD^\times\}.
    \end{align*}
    Choosing $g_0\in PD^\times$ such that $f(g_0)\neq0$, $\forall t\in(E^\times/F^\times)\cap U_1$, consider $f(tg_0)$:
    \begin{align*}
        f(tg_0)&=\chi\chi^\sigma(t)f(g_0)\\
        f(tg_0)&=f(g_0(g_0^{-1}tg_0))=f(g_0).
    \end{align*}
    The second equality holds since $U_1\lhd PD^\times$ is a normal subgroup. Then $\chi\chi^\sigma$ is trivial on $(E^\times/F^\times)\cap U_1$ which implies $\chi\chi^\sigma$ can be seen as a character of $(E^\times/F^\times)/({E^\times/F^\times\cap U_1})\simeq(E^\times/F^\times)U_1/U_1$. Therefore 
\begin{align*}
    \pi^{U_1}&=(\Ind_{E^\times/F^\times}^{PD^\times}\chi\chi^\sigma)^{U_1}\\
&=\Ind_{(E^\times/F^\times)U_1/U_1}^{PD^\times/U_1}\chi\chi^\sigma\\
&=\begin{cases}
        \Ind_{C_{q+1}}^{D_{2(q+1)}}\chi\chi^\sigma,\   \mathrm{if}\ E/F\ \mathrm{is\ unramified};\\
        \Ind_{C_2}^{D_{2(q+1)}}\chi\chi^\sigma,\   \mathrm{if}\ E/F\ \mathrm{is\ ramified}.
    \end{cases}
\end{align*}
By the Shapiro Lemma,
   \begin{align*}
   \mathrm{H}^1(D_{2(q+1)},(\pi^{U_1})^\vee)=\begin{cases}
           \mathrm{H}^1(D_{2(q+1)},\Ind_{C_{q+1}}^{D_{2(q+1)}}(\chi\chi^\sigma)^{-1})\cong\mathrm{H}^1(C_{q+1},(\chi\chi^\sigma)^{-1}),&   \mathrm{if}\ E/F\ \mathrm{is\ unramified};\\
           \mathrm{H}^1(D_{2(q+1)},\Ind_{C_{2}}^{D_{2(q+1)}}(\chi\chi^\sigma)^{-1})\cong\mathrm{H}^1(C_{2},(\chi\chi^\sigma)^{-1}),&   \mathrm{if}\ E/F\ \mathrm{is\ ramified}.
       \end{cases}
   \end{align*}
    \cite[Chapter VIII, Section 4]{serre1979local} shows that for $\mathrm{H}^1(G,A)$ with a cyclic group $G$ and a nontrivial one-dimensional module $A$, $\mathrm{H}^1(G,A)=0$. 
   
   Therefore, when $E/F$ is unramified, assuming $\chi\chi^\sigma=\boldsymbol{1}$, we have $\mathrm{H}^1(C_{q+1},(\chi\chi^\sigma)^{-1})\cong \Hom(C_{q+1},\overline{\boldsymbol{F}_\ell})$ and its dimension is one if and only if $\ell|q+1$. 
   
   If $E/F$ is ramified, suppose $\chi\chi^\sigma=\boldsymbol{1}$. We have $\mathrm{H}^1(C_{2},(\chi\chi^\sigma)^{-1})\cong \Hom(C_{2},\overline{\boldsymbol{F}_\ell})$ and its dimension is one if and only if $\ell=2$. Moreover, in this case, since $\chi^2|_{F^\times}=\boldsymbol{1}$, $\chi|_{F^\times}=\boldsymbol{1}$ and then $\chi\chi^\sigma=\boldsymbol{1}$ is automatically satisfied.
   
This proves the lemma.
\end{proof}

Now we need to separate them into two subcases: $p$ is odd and $p=2$.

\subsubsection{}We first study the case $p\neq 2$ and consider the distinction problem for $\mathrm{Sp}_\chi$ or $\mathrm{St}_\chi$.
\begin{prop}\label{prop:Cq+1}
    Let $\chi$ be a quadratic character of $F^\times$. Then $\chi\circ\det$ is trivial on $k_D^\times/k_F^\times\cong C_{q+1}$ if and only if $\chi$ is trivial on $k_F^\times$, where $k_D$(resp. $k_F$) denotes the residue field of $D$(resp. $F$).
\end{prop}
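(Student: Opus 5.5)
The plan is to identify the map induced by the reduced norm on residue fields. We have $\mathcal{O}_D^\times\to \boldsymbol{k}_D^\times\cong\mathbb{F}_{q^2}^\times$ and $\mathrm{Nrd}_{D/F}(\mathcal{O}_D^\times)\subset\mathcal{O}_F^\times$. The key fact is that reduction of $\mathrm{Nrd}$ modulo $\mathfrak{p}_F$ coincides with the norm map $\Nm_{\mathbb{F}_{q^2}/\mathbb{F}_q}\colon x\mapsto x^{q+1}$. To see this, take an unramified quadratic subfield $L\subset D$, so that $\mathcal{O}_L^\times$ surjects onto $\boldsymbol{k}_D^\times$. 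On $L^\times$ the reduced norm is $\Nm_{L/F}$, and its reduction is the residue field norm.

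Since $p\neq 2$, the character $\chi$ is quadratic and hence trivial on the pro-$p$ group $1+\mathfrak{p}_F$. Therefore $\chi\circ\det$ restricted to $\mathcal{O}_D^\times$ factors as
\[
\mathcal{O}_D^\times\to\boldsymbol{k}_D^\times\xrightarrow{\Nm}\boldsymbol{k}_F^\times\xrightarrow{\chi}\overline{\boldsymbol{F}_\ell}^\times.
\]
This uses that $1+\mathfrak{p}_D$ maps into $1+\mathfrak{p}_F$ under $\mathrm{Nrd}$. On $\boldsymbol{k}_F^\times\subset\boldsymbol{k}_D^\times$, the composite equals $\chi(x^2)=1$. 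Hence it descends to $\boldsymbol{k}_D^\times/\boldsymbol{k}_F^\times\cong C_{q+1}$, which confirms that the statement is well posed.

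It remains to note that the residue norm $\mathbb{F}_{q^2}^\times\to\mathbb{F}_q^\times$ is surjective. This is standard, since the kernel has order $q+1$, so the image has order $q-1$. Consequently the composite is trivial on $\boldsymbol{k}_D^\times$, equivalently on $C_{q+1}$, if and only if $\chi$ is trivial on $\Nm(\boldsymbol{k}_D^\times)=\boldsymbol{k}_F^\times$. Here "$\chi$ trivial on $\boldsymbol{k}_F^\times$" means trivial on the Teichmüller lift, or equivalently on $\mathcal{O}_F^\times$, given the triviality on $1+\mathfrak{p}_F$.

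I expect the main obstacle to be making the identification of the reduced norm modulo $\mathfrak{p}$ with the residue norm rigorous, together with the bookkeeping of how $C_{q+1}$ sits as $U/U_1$. The cleanest route is through the unramified subfield $L$. There, $\mathcal{O}_L^\times\to\boldsymbol{k}_L^\times=\boldsymbol{k}_D^\times$ is surjective, and $\Nm_{L/F}(x)=x\cdot x^{\mathrm{Frob}}$ reduces to $\bar{x}^{\,q+1}$. This avoids any explicit quaternion coordinates.
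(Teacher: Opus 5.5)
Your proposal is correct and follows the same route as the paper: the reduced norm induces the surjective residue norm $\boldsymbol{k}_D^\times\to\boldsymbol{k}_F^\times$, $x\mapsto x^{q+1}$, so $\chi\circ\det$ is trivial on $C_{q+1}$ exactly when $\chi$ is trivial on $\boldsymbol{k}_F^\times$. You also supply details the paper leaves implicit: the identification of the reduction of $\mathrm{Nrd}$ via an unramified quadratic subfield $L\subset D$, the use of $p\neq 2$ to factor $\chi\circ\det$ through the residue field, and the check that the restriction to $C_{q+1}=\boldsymbol{k}_D^\times/\boldsymbol{k}_F^\times$ is well defined.
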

\begin{proof}
     Recall that from the proof of Lemma \ref{3.3}(2), $C_{q+1} \cong k_D^\times / k_F^\times$, 
where $k_D \cong \mathbb F_{q^2}$ and $ k_F \cong \mathbb F_q $. 
The determinant map $ \det : D^\times \to F^\times $ is given by the reduced norm
$\mathrm{Nrd} : D^\times \to F^\times,$
which induces a surjection on residue fields $k_D^\times \twoheadrightarrow k_F^\times.$ Hence the restriction of $ \chi \circ \det $ to $ C_{q+1} $ is trivial if and only if 
$\chi$ is trivial on $k_F^\times$.
\end{proof}

\begin{coro}\label{coro:ramified character}
    If $\chi$ is a quadratic ramified character of $F^\times$, then $\chi\circ\det$ is not trivial on $C_{q+1}$. 
\end{coro}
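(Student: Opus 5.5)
The corollary follows directly from Proposition \ref{prop:Cq+1}, so the plan is to reduce the statement to the assertion that a quadratic ramified character of $F^\times$ cannot be trivial on $k_F^\times$. By Proposition \ref{prop:Cq+1}, $\chi\circ\det$ is trivial on $C_{q+1}\cong k_D^\times/k_F^\times$ if and only if $\chi$ is trivial on $k_F^\times$. Here $k_F^\times$ is viewed inside $\mathcal O_F^\times$ via the Teichm\"uller lift, using the decomposition $\mathcal O_F^\times\simeq \boldsymbol{k}_F^\times\times(1+\mathfrak p_F)$ from the proof of Proposition \ref{coE}. It therefore suffices to show that $\chi|_{k_F^\times}\neq\boldsymbol{1}$.

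The key step uses the standing assumption $p\neq 2$ of this subsection. Since $\chi$ is quadratic, its restriction to the pro-$p$ group $1+\mathfrak p_F$ takes values in the roots of unity of order dividing $2$. Because $p$ is odd, every element of $1+\mathfrak p_F$ is a square, so $\chi$ is trivial on $1+\mathfrak p_F$. Consequently $\chi|_{\mathcal O_F^\times}$ factors through $k_F^\times$.

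Now suppose $\chi$ were trivial on $k_F^\times$. Then $\chi$ would be trivial on all of $\mathcal O_F^\times$, that is, $\chi$ would be unramified, contradicting the hypothesis. Hence $\chi|_{k_F^\times}\neq\boldsymbol{1}$, and Proposition \ref{prop:Cq+1} gives $\chi\circ\det|_{C_{q+1}}\neq\boldsymbol{1}$.

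The only point needing care is the step showing that $\chi$ is trivial on $1+\mathfrak p_F$, which is where $p\neq2$ enters. For $p=2$ the statement as phrased can fail, since a quadratic character may be ramified only on the pro-$2$ part of $\mathcal O_F^\times$, so I would state explicitly that the argument relies on the assumption $p$ odd of this subsubsection.
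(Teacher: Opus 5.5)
Your proof is correct and is essentially the paper's argument: reduce via Proposition \ref{prop:Cq+1} to showing $\chi|_{k_F^\times}\neq\boldsymbol{1}$, use $p\neq 2$ to see that the quadratic character $\chi$ is trivial on the pro-$p$ group $1+\mathfrak p_F$, and conclude that triviality on $k_F^\times$ would make $\chi$ unramified. Your caveat about $p=2$ is also correct: there $q+1$ is odd, so $\chi\circ\det$ is automatically trivial on $C_{q+1}$ while ramified quadratic characters exist, which is why the paper treats $p=2$ separately.
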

\begin{proof}
Suppose that \(\chi \circ \det\) is trivial on \(C_{q+1}\). By Proposition \ref{prop:Cq+1}, \(\chi\) is trivial on \(k_F^\times\). Moreover, since \(\chi\) is a quadratic character and \(1+\mathfrak{p}_F\) is a pro-\(p\) group with \(p \neq 2\), \(\chi\) must be trivial on \(1+\mathfrak{p}_F\). Consequently, \(\chi\) is trivial on \(\mathcal{O}_F^\times\), which contradicts the assumption that \(\chi\) is a ramified character of \(F^\times\).
\end{proof}
In the following theorem, we will give some criterions for when $\St_\chi\ \mathrm{or}\ \Sp_\chi$ is distinguished.

    \begin{thm}\label{thm1}
        Let $\chi$ be a character of $E^\times$ and $p$ be an odd prime.
        \begin{enumerate}
            \item If $q_E^2\not\equiv1(\mathrm{mod}\ \ell)$, then $\mathrm{St}_\chi$ is distinguished if and only if $\chi|_{F^\times}=\omega_{E/F}$.
            \item If $q_E\equiv1(\mathrm{mod}\ \ell)$, then $\mathrm{St}_\chi$ is distinguished if and only if $\chi|_{F^\times}=\omega_{E/F}$.
            \item If $q_E\equiv-1(\mathrm{mod}\ \ell)$, then $\mathrm{Sp}_\chi$ is distinguished if and only if one of the following conditions holds:
            \begin{enumerate}
                \item $\chi|_{F^\times}=\omega_{E/F}\ \mathrm{and}\ E/F\ \mathrm{is\ ramified}$.
                \item $\chi|_{F^\times}\ \mathrm{is\ a\ nontrivial\  quadratic\ unramified\ character\ and}\ E/F\ \mathrm{is\ ramified}$.
            \end{enumerate}
        \end{enumerate}
        Moreover, if $\mathrm{St}_\chi$ or $\mathrm{Sp}_\chi$ is distinguished, the dimension of $\Hom_{D^\times}(\mathrm{St}_\chi,\boldsymbol{1})$ or $\Hom_{D^\times}(\mathrm{Sp}_\chi,\boldsymbol{1})$ is one.
    \end{thm}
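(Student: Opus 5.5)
The plan is to run the long exact sequences (4-1), (4-2), (4-3) case by case. The inputs are Lemma \ref{3.2} for the principal series term, the Key Lemma \ref{3.3} for $\Ext^1_{D^\times}(\chi\circ\det,\boldsymbol{1})$, and Lemma \ref{l4.2} for $\Ext^1_{PD^\times}(\pi,\boldsymbol{1})$. Throughout, $\pi=\pi(\chi\nu^{1/2},\chi\nu^{-1/2})$.

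\textbf{Principal series and character terms.} Since $\chi\nu^{1/2}(\chi\nu^{-1/2})^\sigma=\chi\chi^\sigma$ ($\nu$ is $\sigma$-invariant), Lemma \ref{3.2} gives $\dim\Hom_{D^\times}(\pi,\boldsymbol{1})=1$ exactly when $\chi\chi^\sigma=\boldsymbol{1}$. Note $\chi\chi^\sigma=\boldsymbol{1}$ forces $\chi|_{F^\times}^2=\boldsymbol{1}$. Also $\Hom_{D^\times}(\chi\circ\det,\boldsymbol{1})\neq0$ iff $\chi\circ\det|_{D^\times}=\boldsymbol{1}$. Here $\det|_{D^\times}=\mathrm{Nrd}$ and $\mathrm{Nrd}(D^\times)=F^\times$, so this happens iff $\chi|_{F^\times}=\boldsymbol{1}$. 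By duality, $\Ext^1_{D^\times}(\chi\circ\det,\boldsymbol{1})=\mathrm{H}^1(D^\times,\chi^{-1}|_{F^\times}\circ\mathrm{Nrd})$, which is governed by Lemma \ref{3.3} applied to the character $\chi_0:=\chi|_{F^\times}$ of $F^\times$.

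\textbf{Cases (1) and (2).} In case (2), $\pi=\chi\oplus\St_\chi$, so $\Hom(\St_\chi,\boldsymbol{1})$ is simply the complement of the character term. In case (1) I use (4-1). If $\chi_0^2\neq\boldsymbol{1}$, both the $\Hom$ and $\Ext$ terms vanish, so $\St_\chi$ is not distinguished. If $\chi_0=\boldsymbol{1}$, the first map in (4-1) is an isomorphism $\overline{\boldsymbol F_\ell}\to\overline{\boldsymbol F_\ell}$, since $\chi\chi^\sigma=\boldsymbol{1}$ iff $\chi|_{F^\times}=\boldsymbol{1}$ only up to the twist by $\omega_{E/F}$. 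To settle which characters satisfy $\chi\chi^\sigma=\boldsymbol{1}$, I would use that $\chi\chi^\sigma=\chi_0\circ\Nm_{E/F}$, which is trivial iff $\chi_0\in\{\boldsymbol{1},\omega_{E/F}\}$.

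So the case split is on $\chi_0$:
\begin{enumerate}
\item $\chi_0=\boldsymbol{1}$: $\Hom(\pi,\boldsymbol{1})$ and $\Hom(\chi,\boldsymbol{1})$ are both one-dimensional and the first map is injective, so $\Hom(\St_\chi,\boldsymbol{1})$ injects into $\Ext^1_{D^\times}(\boldsymbol{1},\boldsymbol{1})$. This group is nonzero, since it contains $\Hom(F^\times,\overline{\boldsymbol F_\ell})$ via $\mathrm{Nrd}$. I must therefore show the connecting map is zero. My approach is to factor everything through $PD^\times$, noting that $\pi$ has trivial central character on $F^\times$. The relevant $\Ext$ is then $\Ext^1_{PD^\times}(\boldsymbol{1},\boldsymbol{1})$, which vanishes by Lemma \ref{3.3}(3). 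Justifying the passage from $\Ext_{D^\times}$ to $\Ext_{PD^\times}$ (the central character is fixed and $\Hom$ only sees $F^\times$-coinvariants) is a point to check.
\item $\chi_0=\omega_{E/F}$: $\Hom(\chi,\boldsymbol{1})=0$, so $\Hom(\St_\chi,\boldsymbol{1})$ contains the one-dimensional space $\Hom(\pi,\boldsymbol{1})$. The next term $\Ext^1$ has dimension $\le1$ by Lemma \ref{3.3}(2), and is nonzero only if $\omega_{E/F}$ is trivial on $k_F^\times$ and $\ell\mid q+1$. For upper bounds I continue the sequence one step further and use Lemma \ref{l4.2}. In case (1), $\ell\mid q+1$ with $E/F$ unramified would give $q_E^2\equiv1\pmod{\ell}$, a contradiction. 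So in case (1) with $E/F$ unramified, Lemma \ref{l4.2} gives $\Ext^1_{PD^\times}(\pi,\boldsymbol{1})=0$ and the restriction to $\St_\chi$ is exactly one-dimensional.
\item $\chi_0$ any other quadratic character: $\Hom(\pi,\boldsymbol{1})=0$, so $\Hom(\St_\chi,\boldsymbol{1})\cong\Ext^1$. By Lemma \ref{3.3}(2) and Proposition \ref{prop:Cq+1}, this $\Ext^1$ needs $\chi_0$ unramified and $\ell\mid q+1$.
\end{enumerate}
In the ramified-$E$ subcases of (1) and (2), the conditions $\ell\mid q+1$ and $q_E=q\not\equiv-1\pmod{\ell}$ are incompatible. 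In the unramified-$E$ subcase, $q_E=q^2\equiv1\pmod{\ell}$ contradicts $q_E^2\not\equiv1\pmod{\ell}$ in case (1), and in case (2) it contradicts $\ell\neq2$ together with $q\equiv-1$ and $q^2\equiv1$. The one place this gives no contradiction is case (2) with $E/F$ unramified. There the splitting $\pi=\chi\oplus\St_\chi$ makes $\Hom(\St_\chi,\boldsymbol{1})$ the complement of $\Hom(\chi,\boldsymbol{1})$ inside $\Hom(\pi,\boldsymbol{1})$, and the $\Ext$ term is irrelevant.

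\textbf{Case (3).} Here I combine (4-2) and (4-3). The twisted character $(\chi\circ\det)\nu_2$ has $\nu_2|_{D^\times}=|\mathrm{Nrd}|_E$, and this is trivial modulo $\ell$: for unramified $E$ it is $q^{2\cdot}$-valued with $q^2\equiv1$, and for ramified $E$ one gets $q\equiv-1$, which is not trivial. This needs careful bookkeeping. Next I compute $\Hom(J_\chi,\boldsymbol{1})$ from (4-2), using Lemma \ref{3.3} for the $\Ext^1(\chi,\boldsymbol{1})$ term and Lemma \ref{l4.2} for the next term. The sequence (4-3) then gives $\Hom(\Sp_\chi,\boldsymbol{1})$ as the kernel of restriction to $(\chi\circ\det)\nu_2$. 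The ramified condition in (a) and (b) should arise because for unramified $E$ the extra $\Ext$ class is killed by $\Ext^1_{PD^\times}(\pi,\boldsymbol{1})\neq0$ (Lemma \ref{l4.2}). Condition (b) itself comes from $\Ext^1_{D^\times}(\chi,\boldsymbol{1})$ being one-dimensional for unramified quadratic $\chi_0$ when $\ell\mid q+1$. I expect the main obstacle to be controlling the connecting maps and the restriction map to $(\chi\circ\det)\nu_2$ in this case, that is, showing that the relevant maps are injective or zero rather than merely having the right dimensions. My first attempt would be to pass to $PD^\times/U_1\simeq D_{2(q+1)}$ via Proposition \ref{extfix} and compute explicitly on the finite permutation module $\Ind_{C_{q+1}}^{D_{2(q+1)}}$ or $\Ind_{C_2}^{D_{2(q+1)}}$.
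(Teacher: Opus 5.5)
Your handling of cases (1) and (2) is essentially correct, though muddled. In case (1) one has $\ell\nmid q+1$ whether $E/F$ is ramified or not, so Lemma \ref{3.3} kills $\Ext^1_{D^\times}(\chi\circ\det,\boldsymbol{1})$ for every $\chi|_{F^\times}\neq\boldsymbol{1}$, and Lemma \ref{l4.2} plays no role there. The paper disposes of the non-distinguished $\pi$ differently, by realizing $\St_\chi$ as a quotient of $\pi(\chi\nu^{-1/2},\chi\nu^{1/2})$.

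The genuine gap is case (3), and it starts with a computational error: you have the restriction of $\nu_2$ to $D^\times$ backwards. For $g\in D^\times$ one has $\nu_2(g)=|\mathrm{Nrd}(g)|_E=|\mathrm{Nrd}(g)|_F^2=q^{-2v_F(\mathrm{Nrd}(g))}$ in both cases.
\begin{itemize}
\item If $E/F$ is unramified, then $q^2=q_E\equiv-1\pmod\ell$, so $\nu_2|_{D^\times}=\omega_{E/F}\circ\mathrm{Nrd}$ is \emph{nontrivial}.
\item If $E/F$ is ramified, then $q\equiv-1$ gives $q^2\equiv1$, so $\nu_2|_{D^\times}$ is \emph{trivial}.
\end{itemize}
With your version, for $E/F$ unramified and $\chi|_{F^\times}=\omega_{E/F}$ you would get $\Hom_{D^\times}((\chi\circ\det)\nu_2,\boldsymbol{1})=0$. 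Hence $\Hom_{D^\times}(\Sp_\chi,\boldsymbol{1})\cong\Hom_{D^\times}(J_\chi,\boldsymbol{1})\neq0$, which is the wrong answer. In case (b), $\chi|_{F^\times}$ is the unique nontrivial unramified quadratic character, so your version makes $(\chi\circ\det)\nu_2$ trivial on $D^\times$ and leaves (b) undecided.

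The mechanism you propose for excluding unramified $E$ is also wrong. In case (3) with $E/F$ unramified, $q^2\equiv-1$ and $\ell\neq2$ force $\ell\nmid q+1$. So Lemma \ref{l4.2} gives $\Ext^1_{PD^\times}(\pi,\boldsymbol{1})=0$, not $\neq0$. Likewise Lemma \ref{3.3}(2) gives $\Ext^1_{D^\times}(\chi\circ\det,\boldsymbol{1})=0$ for every nontrivial quadratic $\chi|_{F^\times}$, so there is no extra class to kill. What is actually needed in the unramified case:
\begin{itemize}
\item For $\chi|_{F^\times}\notin\{\boldsymbol{1},\omega_{E/F}\}$, the sequence \eqref{f34} gives $\Hom(J_\chi,\boldsymbol{1})=0$ directly.
\item For $\chi|_{F^\times}=\omega_{E/F}$, $\Hom(J_\chi,\boldsymbol{1})$ is one-dimensional and $(\chi\circ\det)\nu_2$ is trivial on $D^\times$. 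The missing step is that the restriction map $\Hom_{D^\times}(J_\chi,\boldsymbol{1})\to\Hom_{D^\times}(\boldsymbol{1},\boldsymbol{1})$ in \eqref{f35} is surjective. The paper gets this by extending \eqref{f35} in the category of $PD^\times$-representations and using Proposition \ref{extfix}: $\Ext^1_{PD^\times}(\Sp_\chi,\boldsymbol{1})\cong\Ext^1_{D_{2(q+1)}}(\Sp_\chi^{U_1},\boldsymbol{1})=0$ by Maschke's theorem, since $\ell\nmid 2(q+1)$.
\end{itemize}
In the ramified subcases, the correct computation gives $\Hom_{D^\times}((\chi\circ\det)\nu_2,\boldsymbol{1})=\Hom_{D^\times}(\chi\circ\det,\boldsymbol{1})=0$ for $\chi|_{F^\times}\neq\boldsymbol{1}$. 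Hence $\Hom(\Sp_\chi,\boldsymbol{1})\cong\Hom(J_\chi,\boldsymbol{1})$, and this space is one-dimensional in both positive cases:
\begin{itemize}
\item In (a), because $\Ext^1=0$, as $\omega_{E/F}$ is ramified (Corollary \ref{coro:ramified character}).
\item In (b), because Lemma \ref{l4.2} gives $\Ext^1_{PD^\times}(\pi,\boldsymbol{1})=0$, which makes $\Hom_{PD^\times}(J_\chi,\boldsymbol{1})\cong\Ext^1_{PD^\times}(\chi\circ\det,\boldsymbol{1})$; the latter is one-dimensional by Lemma \ref{3.3}(2) and Proposition \ref{prop:Cq+1}.
\end{itemize}
Your sketch names the right sequences but supplies none of these steps, and the inputs it does supply would produce the wrong classification.
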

    \begin{proof}
    \
    \newline
\begin{enumerate}
    \item Assume $q_E^2\not\equiv1(\mathrm{mod}\ \ell)$. If $ \Hom_{D^\times}(\pi(\chi\nu^{1/2},\chi\nu^{-1/2}),\boldsymbol{1})\neq 0$, by Lemma \ref{3.2}, we have $\chi\chi^{\sigma}=\boldsymbol{1}$. Then $\chi|_{F^\times}\circ \Nm_{E/F}=\boldsymbol{1}$ which implies $\chi|_{F^\times}=\boldsymbol{1}_{F^\times}$ or $\omega_{E/F}$  (see \cite[Lemma 4.2]{cui2024}). At this time, the central character of $\pi$ restricting to the center $Z(D^\times)$ is trivial and so  $\mathrm{dim}\ \Hom_{PD^\times}(\pi,\boldsymbol{1})=\mathrm{dim}\ \Hom_{D^\times}(\pi,\boldsymbol{1})=1$.
         \begin{itemize}[leftmargin=0pt]
             \item If $\chi|_{F^\times}=\boldsymbol{1}$, $\chi\circ\det$ factors through $D^\times\rightarrow PD^\times$. Taking the functor $\Hom_{PD^\times}(-,\boldsymbol{1})$, we have
         \begin{align*}
             0\longrightarrow&\Hom_{PD^\times}(\chi\circ\det,\boldsymbol{1})\longrightarrow\Hom_{PD^\times}(\pi(\chi\nu^{1/2},\chi\nu^{-1/2}),\boldsymbol{1})\\&\longrightarrow\Hom_{PD^\times}(\mathrm{St}_\chi,\boldsymbol{1})\longrightarrow\Ext^1_{PD^\times}(\chi\circ\det,\boldsymbol{1})
         \end{align*}
         and dim $\Hom_{PD^\times}(\chi\circ\det,\boldsymbol{1})=1$.
         Due to Lemma \ref{3.3}(3) that $\Ext^1_{PD^\times}(\chi\circ\det,\boldsymbol{1})=0$, then $\mathrm{St}_\chi$ is not distinguished.
         \item If $\chi|_{F^\times}=\omega_{E/F}$, we have $\Hom_{D^\times}(\chi\circ\det,\boldsymbol{1})=0$. Since $q_E^2\not\equiv1\pmod\ell,\ell\nmid (q_E+1)$. By Lemma \ref{3.3}(2), $\Ext^1_{D^\times}(\chi\circ\det,\boldsymbol{1})=0$. Therefore, by the exact sequence \eqref{exactsequence St}, $\mathrm{St}_\chi$ is  distinguished and $\mathrm{dim}\ \Hom_{D^\times}(\mathrm{St}_\chi,\boldsymbol{1})=1$.
         \end{itemize}

If $\Hom_{D^\times}(\pi(\chi\nu^{1/2},\chi\nu^{-1/2})=0$, we can consider another exact sequence
\begin{align*}
    0\longrightarrow\chi\circ\det\longrightarrow\pi(\chi\nu^{-1/2},\chi\nu^{1/2})\longrightarrow\mathrm{St}_\chi\longrightarrow0
\end{align*}
Taking the functor $\Hom_{D^\times}(-,\boldsymbol{1})$, we have $\Hom_{D^\times}(\mathrm{St}_\chi,\boldsymbol{1})=0$ i.e. $\mathrm{St}_\chi$ is not distinguished.
\item 
Suppose $q_E\equiv1(\mathrm{mod\ \ell)}$. Then $\nu$ is trivial and
\[\dim\Hom_{D^\times}(\pi,\boldsymbol{1})=\dim\Hom_{D^\times}(\chi,\boldsymbol{1})+\dim\Hom_{D^\times}(\mathrm{St}_\chi,\boldsymbol{1}). \]
         Suppose that $ \Hom_{D^\times}(\pi,\boldsymbol{1})\neq 0$. If $\chi|_{F^\times}=\boldsymbol{1}$, then $\mathrm{St}_\chi$ is not distinguished. If $\chi|_{F^\times}=\omega_{E/F}$, 
       then $\dim\Hom_{D^\times}(\mathrm{St}_\chi,\boldsymbol{1})=1$. Moreover, if $\pi$ is not distinguished, $\mathrm{St}_\chi$ is also not distinguished.
         \\
\item 
         Suppose that $q_E\equiv-1(\mathrm{mod}\ \ell)$. When $\Hom_{D^\times}(\pi(\chi\nu^{1/2},\chi\nu^{-1/2}),\boldsymbol{1})\neq0$, assuming $\chi|_{F^\times}=\boldsymbol{1}$, by a similar argument to the case $q_E^2\not\equiv1(\mathrm{mod}\ \ell)$, $\Hom_{D^\times}(J_\chi,\boldsymbol{1})=0$. Then  $\Hom_{D^\times}(\mathrm{Sp}_\chi,\boldsymbol{1})=0$, i.e. $\mathrm{Sp}_\chi$ is not distinguished by the exact sequence \eqref{f35}.
         
         Suppose that $\chi|_{F^\times}=\omega_{E/F}$. If $E/F$ is unramified, $q_E=q^2$ and $\ell|q^2+1$ then $\ell\nmid q+1$. Otherwise $\ell|2q$, contradicting $\ell\neq2$ and  $\ell\neq p$. By Lemma \ref{3.3}(2), $\Ext^1_{D^\times}(\chi\circ\det,\boldsymbol{1})=0$. Therefore, followed by the exact sequence \eqref{f34}, $\mathrm{dim}\ \Hom_{D^\times}(J_\chi,\boldsymbol{1})=1$. For the exact sequence \eqref{f35}), consider $(\chi\circ\det)\nu_2|_{D^\times}$. Denote by $\nu_F$ the absolute value of $F^\times$ and $\nu=\nu_F\circ \Nm_{E/F}$. Given any $ g\in D^\times$,$$(\chi\circ\det)\nu_2(g)=\chi(\det(g))|\det(g)|_E=\chi(\det(g))|\det(g)|_F^2=(\chi\cdot\nu_F^2)\circ\det(g).$$ Since $q^2\equiv-1(\mathrm{mod}\ \ell),\ \nu_F^2=\omega_{E/F}$. In this case $\chi|_{F^\times}\cdot\nu_F^2=\omega_{E/F}^2=\boldsymbol{1}_{F^\times}$. Therefore, $\Hom_{D^\times}((\chi\circ\det)\nu_2,\boldsymbol{1})=\Hom_{D^\times}(\boldsymbol{1},\boldsymbol{1})$ with dimension 1. Then we will prove $\Hom_{D^\times}(J_\chi,\boldsymbol{1})\longrightarrow \Hom_{D^\times}(\boldsymbol{1},\boldsymbol{1})$ is surjective.

         Actually, since the central character restricting to the center $Z(D^\times)$ is trivial, we can rewrite the exact sequence \eqref{f35} in the category of smooth representations of $PD^\times$:
         \begin{align*}
             0\longrightarrow\Hom_{PD^\times}(\Sp_\chi,\boldsymbol{1})\longrightarrow\Hom_{PD^\times}(J_\chi,\boldsymbol{1})\longrightarrow\Hom_{PD^\times}(\boldsymbol{1},\boldsymbol{1})\longrightarrow\Ext_{PD^\times}^1(\Sp_\chi,\boldsymbol{1}).
         \end{align*}
         Followed by Proposition \ref{extfix}, $$\Ext^1_{PD^\times}(\Sp_\chi,\boldsymbol{1})\cong\Ext_{D_{2(q+1)}}^1((\Sp_\chi)^{U_1},\boldsymbol{1})$$ where $U_1=(1+\mathfrak{p}_D)F^\times/F^\times$. Since $\ell\nmid 2(q+1)$, Maschke's Theorem implies the group algebra $\overline{\boldsymbol{F}_\ell}[D_{2(q+1)}]$ is semisimple. Then $\Ext_{D_{2(q+1)}}^1((\Sp_\chi)^{U_1},\boldsymbol{1})=0$ i.e. $\Hom_{D^\times}(J_\chi,\boldsymbol{1})\longrightarrow \Hom_{D^\times}(\boldsymbol{1},\boldsymbol{1})$ is surjective.

         Therefore, $\dim \Hom_{PD^\times}(\Sp_\chi, \boldsymbol{1})=0$, i.e., $\Sp_\chi$ is not distinguished.\\

         If $E/F$ is ramified, $q_E=q$ and so $\ell|q+1$. In this case, the quadratic character $\chi|_{F^\times}=\omega_{E/F}$ is ramified. Then Corollary \ref{coro:ramified character} implies $\chi\circ\det|_{C_{q+1}}$ is not trivial. Followed by Lemma \ref{3.3}(2), one has $\Ext_{D^\times}(\chi\circ\det,\boldsymbol{1})=0$. Now, by the exact sequence \eqref{f34}, $\mathrm{dim}\ \Hom_{D^\times}(J_\chi,\boldsymbol{1})=1$. Thanks to the fact that $q=q_E\equiv-1(\mathrm{mod}\ \ell)$, $q^2\equiv1(\mathrm{mod}\ \ell)$ then $\nu_F^2$ is trivial and 
\begin{align*}
    \Hom_{D^\times}((\chi\circ\det)\nu_2,\boldsymbol{1})=\Hom_{D^\times}((\chi\cdot\nu_F^2)\circ\det,\boldsymbol{1})=\Hom_{D^\times}(\chi\circ\det,\boldsymbol{1})=0.
\end{align*}
By the exact sequence \eqref{f35},  $\mathrm{dim}\ \Hom_{D^\times}(\mathrm{Sp}_\chi,\boldsymbol{1})=1$ and so $\mathrm{Sp_\chi}$ is distinguished.\\

If $\pi$ is not distinguished by $D^\times$, Lemma \ref{3.2} implies $\chi\chi^\sigma\neq\boldsymbol{1}$ i.e., $\chi|_{F^\times}\not\in\{\boldsymbol{1},\omega_{E/F}\}$.

Suppose $E/F$ is unramified. Then $q_E=q^2$ and $\ell\nmid\ q+1$. By Lemma \ref{3.3}(2), $\Ext^1_{D^\times}(\chi\circ\det,\boldsymbol{1})=0$ which implies $\Hom_{D^\times}(J_\chi,\boldsymbol{1})=0$. Then $\Sp_\chi$ is not distinguished followed by the exact sequence \eqref{f35}.

Suppose $E/F$ is ramified. Then $q=q_E$ and $\ell|q+1$. The central character of $\Sp_\chi$ is $\chi^2$. If $\chi^2|_{F^\times}\neq\boldsymbol{1}$, $\Sp_\chi$ must be not distinguished. Thus we only need to consider the case $\chi^2|_{F^\times}=\boldsymbol{1}$ with $\chi|_{F^\times}\not\in\{\boldsymbol{1},\omega_{E/F}\}$. Since the central character restricting to the center $Z(D^\times)$ is trivial, we can reduce the exact sequence \eqref{f34} to those representations of $PD^\times$:
\begin{align*}
             0\longrightarrow\Hom_{PD^\times}(\chi\circ\det,\boldsymbol{1})&\longrightarrow\Hom_{PD^\times}(\pi(\chi\nu^{1/2},\chi\nu^{-1/2}),\boldsymbol{1})\\
             \longrightarrow \Hom_{PD^\times}(J_\chi,\boldsymbol{1})&\longrightarrow\Ext^1_{PD^\times}(\chi\circ\det,\boldsymbol{1})\longrightarrow\Ext_{PD^\times}^1(\pi(\chi\nu^{1/2},\chi\nu^{-1/2}),\boldsymbol{1}).  
\end{align*}
If $\chi|_{F^\times}$ is a ramified quadratic character, $\Ext_{PD^\times}(\chi\circ\det,\boldsymbol{1})=\Ext_{D^\times}(\chi\circ\det,\boldsymbol{1})=0$ by Corollary \ref{coro:ramified character} and Lemma \ref{3.3}(2). Then $\Sp_\chi$ is not distinguished. 

If $\chi|_{F^\times}$ is an unramified quadratic character, then Proposition \ref{prop:Cq+1} shows that $\chi\circ\det$ is trivial on $C_{q+1}$. Thus dim $\Ext_{PD^\times}(\chi\circ\det,\boldsymbol{1})=1$ by Lemma \ref{3.3}(2) and Lemma \ref{l4.2} implies $\Ext_{PD^\times}^1(\pi,\boldsymbol{1})=0$. Therefore, $$\dim\ \Hom_{PD^\times}(J_\chi,\boldsymbol{1})=\dim\ \Ext_{PD^\times}^1(\chi\circ\det,\boldsymbol{1})=1.$$ Followed by
the exact sequence \eqref{f35} and the fact that $\Hom_{D^\times}((\chi\circ\det)\nu_2,\boldsymbol{1})=0$, $\Sp_\chi$ is distinguished and $\dim\ \Hom_{D^\times}(\Sp_\chi,\boldsymbol{1})=1$.

Therefore, when $\pi$ is not distinguished, $\Sp_\chi$ is distinguished if and only if $E/F$ is ramified and $\chi|_{F^\times}$ is an unramified quadratic character.
\end{enumerate}

\noindent This finishes the proof.
 \end{proof}

\subsubsection{}Now, we focus on the case $p=2$.

For Lemma \ref{3.3}, the case $p=2$ does not affect the argument that if $\chi^2\neq\boldsymbol{1}$, then $\Ext_{D^\times}^1(\boldsymbol{1},\chi\circ\det)=0$. Moreover, according to our argument and \cite{rie70}, the structures of $D^\times$, $PD^\times$ and their filtrations also remain unchanged and so the exact sequences \eqref{f2} and \eqref{f3} still hold.

When $\chi=\boldsymbol{1}$, $\mathrm{H}^1(PD^\times,\boldsymbol{1})=\Hom(D_{2(q+1)}^{ab},\overline{\boldsymbol{F}_\ell})$. At this time, $q+1$ is odd and so $D_{2(q+1)}^{ab}\cong C_2$. Since $\ell\neq p$, $\Hom(D^{ab}_{2(q+1},\overline{\boldsymbol{F}_\ell}) $ is also zero.

\begin{lem}\label{p2quadratic}
    In the case $p=2$, when $\chi^2|_{F^\times}=\boldsymbol{1}$ but $\chi|_{F^\times}\neq\boldsymbol{1}$, we have $$\mathrm{dim}\ \Ext_{D^\times}(\boldsymbol{1},\chi\circ\det)=\begin{cases}
        1,\ \mathrm{if}\ \chi|_{F^\times}\ \mathrm{is\ unramified\ and}\  \ell|q+1;\\
        0,\ \mathrm{otherwise}.
    \end{cases}$$
\end{lem}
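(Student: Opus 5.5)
We follow the proof of Lemma \ref{3.3}(2) and only revisit the steps where $p\neq 2$ was used. Note that $p=2$ forces $\ell\neq 2$, since $\ell\neq p$.

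\textbf{Reduction to $PD^\times$.} Since $\chi^2=\boldsymbol{1}$, we have $(\chi\circ\det)^Z=\chi\circ\det$, so the five-term sequence \eqref{f1} gives
\[
0\to \mathrm{H}^1(PD^\times,\chi\circ\det)\to \mathrm{H}^1(D^\times,\chi\circ\det)\to \mathrm{H}^1(Z,\chi\circ\det)^{PD^\times}.
\]
The right-hand term is $\Hom(Z,\overline{\boldsymbol F_\ell})^{PD^\times}$, with $PD^\times$ acting through $\chi\circ\det$. The reduced norm $\mathrm{Nrd}:D^\times\to F^\times$ is surjective, so $\chi\circ\det$ is nontrivial. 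As in Lemma \ref{3.3}(2), this forces the right-hand term to vanish. Hence $\Ext^1_{D^\times}(\boldsymbol{1},\chi\circ\det)\cong \mathrm{H}^1(PD^\times,\chi\circ\det)$.

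\textbf{Passing through $U_1$.} Next use $1\to U_1\to PD^\times\to D_{2(q+1)}\to 1$; the structure of this extension is unchanged for $p=2$ by \cite{rie70}, and the section via $\bar\Pi$ still works. Now $U_1$ is pro-$2$ and $\ell\neq 2$, so Corollary \ref{copro-p} gives $\mathrm{H}^1(U_1,\chi\circ\det)=0$ whatever the action. This replaces the old argument, which needed $\chi\circ\det|_{U_1}=\boldsymbol{1}$; for $p=2$ a quadratic character may be nontrivial on a pro-$2$ group. Sequence \eqref{f2} then yields
\[
\mathrm{H}^1(PD^\times,\chi\circ\det)\cong \mathrm{H}^1\bigl(D_{2(q+1)},(\chi\circ\det)^{U_1}\bigr).
\]
Since $\mathrm{Nrd}(1+\mathfrak p_D)=1+\mathfrak p_F$, the character $\chi\circ\det$ is trivial on $U_1$ if and only if $\chi|_{1+\mathfrak p_F}=\boldsymbol{1}$. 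Otherwise $(\chi\circ\det)^{U_1}=0$ and the Ext group vanishes.

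\textbf{Analysis of $\mathrm{H}^1(D_{2(q+1)},\chi\circ\det)$.} Assume now $\chi|_{1+\mathfrak p_F}=\boldsymbol{1}$. Since $q$ is even, $k_F^\times$ has odd order $q-1$, so the quadratic $\chi$ is automatically trivial on $k_F^\times$. Hence $\chi|_{1+\mathfrak p_F}=\boldsymbol{1}$ exactly when $\chi$ is unramified. In that case $\chi\circ\det|_{C_{q+1}}=\boldsymbol{1}$ by Proposition \ref{prop:Cq+1}; this also follows since $q+1$ is odd. Because $\mathrm{Nrd}(\Pi)$ is a uniformizer and $\chi$ is the nontrivial unramified quadratic character, $\chi\circ\det(\bar\Pi)=-1$. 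We then run sequence \eqref{f3} verbatim.
\begin{itemize}
\item $\mathrm{H}^1(C_2,\chi\circ\det)=0$, because the action is nontrivial on a cyclic group.
\item $\mathrm{H}^2(C_2,M)\cong M^{C_2}/(1+s)M=0$, because $\ell$ is odd and $C_2$ acts by $-1$.
\item $\mathrm{H}^1(C_{q+1},\chi\circ\det)^{C_2}=\Hom(C_{q+1},\overline{\boldsymbol F_\ell})$, because inversion combined with the sign $-1$ acts trivially.
\end{itemize}
So the dimension is $1$ exactly when $\ell\mid q+1$, and $0$ otherwise. Combining the ramified case (dimension $0$) with the unramified case gives the stated formula.

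\textbf{Expected obstacle.} The delicate point is the behaviour of $\chi\circ\det$ on $U_1$ when $p=2$. One needs the surjectivity $\mathrm{Nrd}(1+\mathfrak p_D)=1+\mathfrak p_F$, which is standard but should be cited for the dyadic case. One also needs to replace the triviality argument by the vanishing from Corollary \ref{copro-p}, so that ramified quadratic characters give $(\chi\circ\det)^{U_1}=0$ rather than a nontrivial contribution.
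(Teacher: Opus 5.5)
Your proposal is correct and follows essentially the same route as the paper: reduce to $PD^\times$ via \eqref{f1}, kill $\mathrm{H}^1(U_1,\chi\circ\det)$ with Corollary \ref{copro-p}, and get $(\chi\circ\det)^{U_1}=0$ for ramified $\chi$ from $\mathrm{Nrd}(1+\mathfrak p_D)=1+\mathfrak p_F$ (using that $q-1$ is odd); in the unramified case you then rerun the dihedral computation of Lemma \ref{3.3}(2). The surjectivity you flag is also used without citation in the paper, and the needed inclusion $\mathrm{Nrd}(1+\mathfrak p_D)\supseteq 1+\mathfrak p_F$ follows from $\mathrm{Nm}_{L/F}(1+\mathfrak p_L)=1+\mathfrak p_F$ for an unramified quadratic subfield $L\subset D$, which holds in any residue characteristic.
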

\begin{proof}
    Thanks to the exact sequence \eqref{f1} and $\mathrm{H}^1(F^\times,(\chi\circ\det))^{PD^\times}=0$ in Lemma \ref{3.3}, we have $\mathrm{H}^1(D^\times,\chi\circ\det)=\mathrm{H}^1(PD^\times,\chi\circ \det)$. Consider the exact sequence \eqref{f2}:
    \begin{align*}
            0\longrightarrow \mathrm{H}^1(D_{2(q+1)},(\chi\circ\det)^{U_1})\longrightarrow\mathrm{H}^1(PD^\times,\chi\circ\det)\longrightarrow \mathrm{H}^1(U_1,\chi\circ\det)^{D_{2(q+1)}}.
        \end{align*}
    where $U_1=(1+\mathfrak{p}_D)F^\times/F^\times$.
        
    When $\chi|_{F^\times}$ is ramified, since $\chi|_{F^\times}$ is quadratic and $p=2$, $\chi|_{F^\times}$ is trivial on $\mathcal{O}_F^\times/(1+\mathfrak{p}_F)\cong C_{q-1}$. Then $\chi|_{F^\times}$ is not trivial on $1+\mathfrak{p}_F$; otherwise $\chi|_{F^\times}$ is trivial on $\mathcal{O}_F^\times$ contradicting the fact that $\chi$ is ramified. The map $\det|_{1+\mathfrak{p}_D}:1+\mathfrak{p}_D\longrightarrow1+\mathfrak{p}_F$ is surjective and so $\chi\circ\det$ is not trivial on $U_1$. Then $(\chi\circ\det)^{U_1}=0$ and $\mathrm{H}^1(D_{2(q+1)},(\chi\circ\det)^{U_1})=0$.

    For $\mathrm{H}^1(U_1,\chi\circ\det)$, $U_1$ is a pro-2 group and $\chi\circ\det$ is a discrete $\ell$-torsion module and $\ell\neq 2$. Followed by Corollary \ref{copro-p}, $\mathrm{H}^1(U_1,\chi\circ\det)=0$. Therefore, when $\chi$ is ramified, we have $$\mathrm{H}^1(D^\times,\chi\circ\det)=\mathrm{H}^1(PD^\times,\chi\circ\det)=0.$$
    When $\chi|_{F^\times}$ is unramified, $\chi|_{\mathcal{O}_F^\times}=\boldsymbol{1}$ and $\chi(\varpi_F)=-1$. Then $\chi\circ\det$ is trivial on $\mathcal{O}_D^\times$ especially on $U_1$. In this case, 
    \begin{align*}
        (\chi\circ\det)^{U_1}=\chi\circ\det, \ \mathrm{H}^1(U_1,\chi\circ\det)=\Hom(U_1,\overline{\boldsymbol{F}_\ell})=0\ \mathrm{since}\ \ell\neq p.
    \end{align*}
     Now, we likewise only need to compute $\mathrm{H}^1(D_{2(q+1)},\chi\circ\det)$. If $p=2$, $q+1$ is odd and so $\chi\circ\det$ must be trivial over $C_{q+1}$. The remaining argument is the same as the case $\chi\circ\det|_{C_{q+1}}=\boldsymbol{1}$ in Lemma \ref{3.3}. Therefore, when $\chi$ is unramified, dim $\Ext_{D^\times}^1(\boldsymbol{1},\chi\circ\det)=\begin{cases}
         1,\ \ell|\ q+1;\\
         0,\ \mathrm{otherwise}.
     \end{cases}$
     
     In a short summary, $\mathrm{dim}\ \Ext_{D^\times}^1(\boldsymbol{1},\chi\circ\det)=\begin{cases}
        1,& \mathrm{if}\ \chi|_{F^\times}\ \mathrm{is\ unramified\ and}\  \ell|\ q+1;\\
        0,& \mathrm{otherwise}.
    \end{cases}$.
\end{proof}

For Theorem \ref{thm1}, according to the argument as before, we only need to focus on the case when $q_E\equiv-1(\mathrm{mod}\ \ell)$.
\begin{prop}\label{pSp2}
    When $p=2$, $q_E\equiv-1(\mathrm{mod}\ \ell)$, $\mathrm{Sp}_\chi$ is distinguished if and only if one of the following conditions holds:
    \begin{enumerate}
        \item $\chi|_{F^\times}=\omega_{E/F}$ and $E/F$ is ramified;
        \item $\chi|_{F^\times}$ is a nontrivial quadratic unramified character and $E/F$ is ramified.
    \end{enumerate}
    Moreover, if $\Sp_\chi$ is distinguished, the dimension of $\Hom_{D^\times}(\Sp_\chi,\boldsymbol{1})$ is 1.
\end{prop}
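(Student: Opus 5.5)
We follow the proof of Theorem \ref{thm1}(3) verbatim, replacing each use of Lemma \ref{3.3}(2) by Lemma \ref{p2quadratic}, and each use of Corollary \ref{coro:ramified character} by the ramified case of Lemma \ref{p2quadratic}. Lemma \ref{3.3}(3) remains valid for $p=2$ by the remark preceding Lemma \ref{p2quadratic}, and Lemma \ref{l4.2} is stated for $p=2$. Note that since $\ell\neq 2=p$ and $\ell\mid q_E+1$, we have $q_E$ even, so $\ell$ is odd. The argument again runs through the exact sequences \eqref{f34} and \eqref{f35}, split according to whether $\pi=\pi(\chi\nu^{1/2},\chi\nu^{-1/2})$ is distinguished, i.e.\ whether $\chi|_{F^\times}\in\{\boldsymbol{1},\omega_{E/F}\}$ (Lemma \ref{3.2}).

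\emph{Case $\pi$ distinguished.}
\begin{itemize}
\item If $\chi|_{F^\times}=\boldsymbol{1}$, we work over $PD^\times$ and use $\Ext^1_{PD^\times}(\boldsymbol{1},\boldsymbol{1})=0$. This gives $\Hom(J_\chi,\boldsymbol{1})=0$, hence $\Sp_\chi$ is not distinguished.
\item If $\chi|_{F^\times}=\omega_{E/F}$ and $E/F$ is unramified, then $\ell\mid q^2+1$ forces $\ell\nmid q+1$. Lemma \ref{p2quadratic} gives $\Ext^1=0$, so $\dim\Hom(J_\chi,\boldsymbol{1})=1$. Moreover $(\chi\circ\det)\nu_2|_{D^\times}$ is trivial, since $\nu_F^2=\omega_{E/F}$. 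Surjectivity of $\Hom_{PD^\times}(J_\chi,\boldsymbol{1})\to\Hom(\boldsymbol{1},\boldsymbol{1})$ follows from Proposition \ref{extfix} together with Maschke for $D_{2(q+1)}$, as $\ell\nmid 2(q+1)$. Hence $\Sp_\chi$ is not distinguished.
\item If $\chi|_{F^\times}=\omega_{E/F}$ and $E/F$ is ramified, then $\omega_{E/F}$ is a ramified quadratic character, so Lemma \ref{p2quadratic} gives $\Ext^1_{D^\times}(\chi\circ\det,\boldsymbol{1})=0$ and $\dim\Hom(J_\chi,\boldsymbol{1})=1$. Since $q\equiv-1\pmod\ell$, $\nu_F^2$ is trivial, so $\Hom_{D^\times}((\chi\circ\det)\nu_2,\boldsymbol{1})=\Hom_{D^\times}(\chi\circ\det,\boldsymbol{1})=0$. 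By \eqref{f35}, $\dim\Hom(\Sp_\chi,\boldsymbol{1})=1$.
\end{itemize}

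\emph{Case $\pi$ not distinguished.}
\begin{itemize}
\item If $E/F$ is unramified, then $\ell\nmid q+1$, so Lemma \ref{p2quadratic} (or Lemma \ref{3.3}(1) when $\chi^2|_{F^\times}\ne\boldsymbol{1}$) kills $\Ext^1$. Hence $\Hom(J_\chi,\boldsymbol{1})=0$.
\item If $E/F$ is ramified, then $q\equiv -1\pmod\ell$. We may assume the central character is trivial on $F^\times$, i.e.\ $\chi^2|_{F^\times}=\boldsymbol{1}$, and pass to $PD^\times$.
\begin{itemize}
\item If $\chi|_{F^\times}$ is ramified, $\Ext^1=0$ by Lemma \ref{p2quadratic}, so $\Sp_\chi$ is not distinguished.
\item If $\chi|_{F^\times}$ is unramified, $\Ext^1_{PD^\times}(\chi\circ\det,\boldsymbol{1})$ is one-dimensional, using $\mathrm{H}^1(D^\times,-)=\mathrm{H}^1(PD^\times,-)$ from the proof of Lemma \ref{p2quadratic}. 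Lemma \ref{l4.2} gives $\Ext^1_{PD^\times}(\pi,\boldsymbol{1})=0$: the first case there needs $E/F$ unramified, and the second needs $\ell=2$. Hence $\dim\Hom(J_\chi,\boldsymbol{1})=1$. Also $\Hom((\chi\circ\det)\nu_2,\boldsymbol{1})=0$, so $\Sp_\chi$ is distinguished with one-dimensional Hom.
\end{itemize}
\end{itemize}

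The main point requiring care is to check that every auxiliary input survives $p=2$. First, Lemma \ref{p2quadratic} must cover the needed restrictions; in particular we must note that for $p=2$ a ramified quadratic $\chi$ is detected on $U_1$ rather than on $C_{q+1}$. Second, the $\Ext^1_{PD^\times}(\pi,\boldsymbol{1})$ vanishing and the Maschke step for $D_{2(q+1)}$ (with $q+1$ odd) must be verified to still apply. The one-dimensionality claims then follow directly from the exact sequences.
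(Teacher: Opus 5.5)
Your proposal is correct and follows essentially the same route as the paper: the same case split on whether $\pi(\chi\nu^{1/2},\chi\nu^{-1/2})$ is distinguished and on the ramification of $E/F$ and $\chi|_{F^\times}$, feeding Lemma \ref{p2quadratic}, Lemma \ref{3.3}(3), Lemma \ref{l4.2} and the Maschke argument for $D_{2(q+1)}$ into the sequences \eqref{f34} and \eqref{f35}. You are also slightly more explicit than the paper in the case $\chi|_{F^\times}=\omega_{E/F}$ with $E/F$ ramified, where you check that $\Hom_{D^\times}((\chi\circ\det)\nu_2,\boldsymbol{1})=0$; this is what gives the isomorphism $\Hom_{D^\times}(\Sp_\chi,\boldsymbol{1})\cong\Hom_{D^\times}(J_\chi,\boldsymbol{1})$.
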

\begin{proof}
Suppose $\pi$ is distinguished. Then the central character restricting the center $Z(D^\times)$ is trivial. We can reduce to the case for $PD^\times$. If $\chi|_{F^\times}$ is trivial, then $\Ext_{PD^\times}^1(\chi\circ\det,\boldsymbol{1})=0$ and $\Hom_{PD^\times}(J_\chi,\boldsymbol{1})=0$ implies that $\Sp_\chi$ is not distinguished due to \eqref{f35}. 

If $\chi|_{F^\times}=\omega_{E/F}$, supposing $E/F$ is unramified, then $\ell|\ q^2+1$ shows that $\ell\nmid q+1$. Followed by Lemma \ref{p2quadratic}, $\Ext_{D^\times}(\chi\circ\det,\boldsymbol{1})=0$. Then $$\dim \Hom_{D^\times}(J_\chi,\boldsymbol{1})=\dim \Hom_{D^\times}((\chi\circ\det)\nu_2,\boldsymbol{1})=1.$$ Since $\ell\nmid 2(q+1)$, $\Ext_{PD^\times}^1(\Sp_\chi,\boldsymbol{1})\cong\Ext_{D_{2(q+1)}}^1((\Sp_\chi)^{U_1},\boldsymbol{1})=0$ by Maschke's Theorem. The map $\Hom_{D^\times}(J_\chi,\boldsymbol{1})\longrightarrow\Hom_{D^\times}(\boldsymbol{1},\boldsymbol{1})$ is surjective and so $\Sp_\chi$ is not distinguished.

Now suppose $E/F$ is ramified. Then $\omega_{E/F}$ is ramified. Using Lemma \ref{p2quadratic}, $\Ext_{D^\times}^1(\chi\circ\det,\boldsymbol{1})=0$. Followed by the exact sequences \eqref{f34} and \eqref{f35}, we have the isomorphisms:
\begin{align*}
    \Hom_{D^\times}(\pi,\boldsymbol{1})\cong \Hom_{D^\times}(J_\chi,\boldsymbol{1})\ \mathrm{and}\ \Hom_{D^\times}(\mathrm{Sp}_\chi,\boldsymbol{1})\cong \Hom_{D^\times}(J_\chi,\boldsymbol{1})
\end{align*}
Since $\dim \Hom_{D^\times}(\pi,\boldsymbol{1})= 1$, $\Hom_{D^\times}(\mathrm{Sp}_\chi,\boldsymbol{1})\neq 0$ i.e., $\mathrm{Sp}_\chi$ is distinguished and its dimension is 1.\\

Suppose $\pi$ is not distinguished. We only consider the case $\chi^2|_{F^\times}=\boldsymbol{1}$ with $\chi|_{F^\times}\not\in\{\boldsymbol{1},\omega_{E/F}\}$. If $E/F$ is unramified, then $\ell\nmid q+1$. Lemma \ref{p2quadratic} implies $\Ext_{D^\times}(\chi\circ\det,\boldsymbol{1})=0$. Then $\Sp_\chi$ is not distinguished since $\Hom_{D^\times}(J_\chi,\boldsymbol{1})=0$. If $E/F$ is ramified, Lemma \ref{l4.2} implies $\Ext_{PD^\times}^1(\pi,\boldsymbol{1})=0$ and $\Hom_{PD^\times}((\chi\circ\det)\nu_2,\boldsymbol{1})=\Hom_{D^\times}((\chi\circ\det)\nu_2,\boldsymbol{1})=0$. Thus we have 
\begin{align*}
    \Hom_{D^\times}(\Sp_\chi,\boldsymbol{1})\simeq\Ext_{PD^\times}^1(\chi\circ\det,\boldsymbol{1})
\end{align*}
by the exact sequences \eqref{f34} and \eqref{f35}. Therefore $\Sp_\chi$ is distinguished if and only if $\chi|_{F^\times}$ is unramified, which is followed by Lemma \ref{p2quadratic}. 
This finished the proof.
\end{proof}

\subsection{The case $\ell=2$.}
    \ 
    \newline
\indent
This section is the most difficult part in the whole paper. We will transfer the distinction problem for the cuspidal representation $\Sp_\chi$ to the finite group case. Write down all maps in the long exact sequence \eqref{f34}  explicitly and do the computation carefully. 
When $\ell=2$, the equality $\GL_2(E)=B_2(E)D^\times$ still holds. Therefore, by Lemma \ref{3.2}, if $\pi=\pi(\chi_1,\chi_2)$ is a normalized principal series representation of $\GL_2(E)$, $\pi$ is distinguished by $D^\times$ if and only if $\chi_1\chi_2^\sigma=\boldsymbol{1}$ and 
     \begin{align*}
             \mathrm{dim}\ \Hom_{D^\times}(\pi,\boldsymbol{1})=\begin{cases}
                 1,\  if\ \chi_1\chi_2^\sigma=\boldsymbol{1},\\
                 0,\  otherwise.
             \end{cases}
         \end{align*}

     In this case, we always have $q_E\equiv1(\mathrm{mod}\ \ell)$, which means $\nu$ is always trivial. Referring to \cite{vig1989}, $\pi(\chi_1,\chi_2)$ is reducible if and only if $\chi_1=\chi_2$. Moreover, $\pi(\chi,\chi)$ has length 3 and the irreducible subquotients are $\chi$, $\chi$ and $\mathrm{Sp}_\chi$, where $\mathrm{Sp}_\chi$ is an infinite-dimensional representation.
    \begin{thm}\label{thm41}
    Assume $\ell=2$.
     The irreducible representation   $\Sp_\chi$ is distinguished by $D^\times$ if and only if $\chi|_{F^\times}=\boldsymbol{1}$, $E/F$ is ramified and $q\equiv 3\pmod 4$.
    \end{thm}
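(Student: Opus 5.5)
The plan is to reduce everything to the finite dihedral group $D_{2(q+1)}$, as in Lemma \ref{l4.2} and Lemma \ref{3.3}, and then compute one explicit invariant functional.

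\textbf{Step 1: reduction to $PD^\times$.} Since $\ell=2$, $\overline{\boldsymbol F_2}^\times$ has no element of order $2$. The central character of $\Sp_\chi$ on $Z(D^\times)=F^\times$ is $\chi^2|_{F^\times}$, so distinction forces $\chi^2|_{F^\times}=\boldsymbol 1$, hence $\chi|_{F^\times}=\boldsymbol 1$. Assume this from now on; then $\chi\chi^\sigma=\boldsymbol 1$. In particular $\chi\circ\det$, $\pi=\pi(\chi,\chi)$, $J_\chi$ and $\Sp_\chi$ all become representations of $PD^\times$, with $\chi\circ\det|_{D^\times}=\boldsymbol 1$. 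We use the filtration
\[
0\to J_\chi\to\pi\to\chi\to0,\qquad 0\to\chi\to J_\chi\to\Sp_\chi\to0 .
\]
Since $\ell=2\neq p$, $p$ is odd, so $U_1=(1+\mathfrak p_D)F^\times/F^\times$ is pro-$p$. Hence $(-)^{U_1}$ is exact (Corollary \ref{copro-p}), and
\[
\Hom_{PD^\times}(V,\boldsymbol 1)=\Hom_{D_{2(q+1)}}(V^{U_1},\boldsymbol 1)
\]
for every smooth $V$ (this also underlies Proposition \ref{extfix}). As in the proof of Lemma \ref{l4.2}, $P:=\pi^{U_1}$ is $\Ind_{C_{q+1}}^{D_{2(q+1)}}\boldsymbol 1$ if $E/F$ is unramified and $\Ind_{C_2}^{D_{2(q+1)}}\boldsymbol 1$ if $E/F$ is ramified. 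Here $C_2$ is generated by the image $s$ of a uniformizer of $E$, which is a reflection. Applying $(-)^{U_1}$ to the filtration gives $P\supset J:=J_\chi^{U_1}\supset\boldsymbol 1$, with $P/J\cong\boldsymbol 1$ and $J/\boldsymbol 1=\Sp_\chi^{U_1}$.

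\textbf{Step 2: the unramified case.} Here $\dim P=2$ and both trivial subquotients survive, so $\Sp_\chi^{U_1}=0$. Therefore $\Hom_{D^\times}(\Sp_\chi,\boldsymbol 1)=0$.

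\textbf{Step 3: the ramified case, $\Hom(J,\boldsymbol 1)$.} Now $P$ is the permutation module on $X=D_{2(q+1)}/C_2$, which has $q+1$ points, with $q+1$ even. The trivial quotient $P\to\boldsymbol 1$ is the augmentation, so $J$ is the augmentation kernel; the trivial subspace is spanned by the constant function $\mathbf 1_X$, which lies in $J$ because $|X|$ is even. Apply $\Hom_{D_{2(q+1)}}(-,\boldsymbol 1)$ to $0\to J\to P\to\boldsymbol 1\to0$. We have $\dim\Hom(P,\boldsymbol 1)=1$ by Frobenius reciprocity, and this functional is the augmentation, which kills $J$; so $\Hom(P,\boldsymbol 1)\to\Hom(J,\boldsymbol 1)$ is zero. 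Hence $\Hom(J,\boldsymbol 1)$ is the kernel of
\[
\Ext^1(\boldsymbol 1,\boldsymbol 1)=\Hom(D_{2(q+1)}^{ab},\overline{\boldsymbol F_2})\longrightarrow\Ext^1(P,\boldsymbol 1)=\mathrm H^1(C_2,\boldsymbol 1).
\]
This map is restriction to $\langle s\rangle$ by Shapiro's lemma. Since $D_{2(q+1)}^{ab}\cong C_2\times C_2$, generated by the images of $s$ and a rotation $r$, the map is surjective, and $\Hom(J,\boldsymbol 1)$ is one-dimensional, spanned by the class of the homomorphism $\varphi$ with $\varphi(s)=0$ and $\varphi(r)=1$.

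\textbf{Step 4: the parity test (main step).} The main work is to make the connecting map explicit so that we can test the resulting functional on $\mathbf 1_X$. Because $\varphi(s)=0$, the function $c(gC_2)=\varphi(g)$ is well defined on $X$. Set $\lambda(f)=\sum_{x\in X}c(x)f(x)$. Then $\lambda(gf)-\lambda(f)=\varphi(g)\sum_x f(x)$, which vanishes on $J$, so $\lambda$ is a nonzero element of $\Hom(J,\boldsymbol 1)$ and spans it. Now use
\[
0\to\Hom(\Sp_\chi^{U_1},\boldsymbol 1)\to\Hom(J,\boldsymbol 1)\to\Hom(\boldsymbol 1,\boldsymbol 1).
\]
$\Sp_\chi$ is distinguished exactly when $\lambda(\mathbf 1_X)=0$, and then the Hom space is one-dimensional. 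Taking coset representatives $r^kC_2$ with $0\le k\le q$, we get $c(r^kC_2)=k\bmod 2$. Hence $\lambda(\mathbf 1_X)=\#\{k\text{ odd}\}=\frac{q+1}{2}\bmod 2$, which vanishes exactly when $q\equiv3\pmod 4$.

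The delicate points to check carefully are two: that the two trivial pieces of $P$ are indeed the constants and the augmentation, and that $\lambda$ really spans $\Hom(J,\boldsymbol 1)$ rather than merely lying in it. The dimension count in Step 3 settles the latter, since $\lambda\neq0$ and $\dim\Hom(J,\boldsymbol 1)=1$.
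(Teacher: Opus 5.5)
Your proposal is correct and follows essentially the same route as the paper. You pass to $U_1$-invariants to reach the permutation module of $D_{2(q+1)}$, obtain $\dim\Hom(J,\boldsymbol 1)=1$ from the surjectivity of the restriction $\mathrm H^1(D_{2(q+1)},\boldsymbol 1)\to\mathrm H^1(C_2,\boldsymbol 1)$, and evaluate the explicit invariant functional on the constant vector; your $c(r^kC_2)=k \bmod 2$ is exactly the paper's $\lambda(a_0,\dots,a_q)=\sum_i i\,a_i$. The differences are cosmetic: you settle the unramified case by the dimension count $\Sp_\chi^{U_1}=0$, and you derive $\lambda$ from the homomorphism $\varphi$ rather than writing it down and checking invariance directly.
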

        \begin{proof}
            We have two exact sequences
            \begin{align*}\label{expi}
                0\longrightarrow M\longrightarrow \pi(\chi,\chi)&\longrightarrow\chi\circ\det\longrightarrow0\tag{4-4}
            \end{align*}
            and
            \begin{align*}\label{f41}
                0\longrightarrow\chi\circ\det\longrightarrow &M\longrightarrow\Sp_\chi\longrightarrow0\tag{4-5}
            \end{align*}
            
        If $\pi(\chi,\chi)$ is not distinguished, we must have $\chi|_{F^\times}\neq\boldsymbol{1}$; otherwise  $\chi\chi^\sigma=\chi|_{F^\times}\circ \Nm_{E/F}$ is trivial, which will imply that $\pi(\chi,\chi)$ is distinguished by $D^\times$. Then $\chi^2|_{F^\times}\neq\boldsymbol{1}$ since $\ell=2$ and so
         $\Sp_\chi$ is not distinguished.

        Suppose that $\pi(\chi,\chi)$ is distinguished, i.e., $\chi\chi^\sigma=\boldsymbol{1}$. Then $\chi^2|_{F^\times}=\boldsymbol{1}$ which means $\chi|_{F^\times}=\boldsymbol{1}$.
        Each element in $\pi(\chi,\chi)$ is of the form 
        \begin{align*}
            \{f:G\rightarrow \overline{\boldsymbol{F}_2}\ |f\  \mathrm{is \ smooth\ and}\ f(bg)=\chi(a)\chi(d)f(g),\forall b=\begin{pmatrix}
                a&x\\
                &d
            \end{pmatrix},g\in G\}
        \end{align*}
        Therefore, $\forall z\in Z(D^\times)=F^\times, z\cdot f(g)=f(g\cdot zI_2)=f(zI_2\cdot g)=\chi^2(z)f(g)=f(g)$. We have that $z$ acts trivially on $\pi(\chi,\chi)$. Then $\pi(\chi,\chi)$ can be viewed as a representation of $PD^\times$ and $\Hom_{D^\times}(\pi(\chi,\chi),\boldsymbol{1})=\Hom_{PD^\times}(\pi(\chi,\chi),\boldsymbol{1})$, so are $M$ and $\chi\circ\det$. Taking the funcor $\Hom_{PD^\times}(-,\boldsymbol{1})$, we get the long exact sequence
        \begin{align*}\label{f42}
            0&\longrightarrow\Hom_{PD^\times}(\chi\circ\det,\boldsymbol{1})\stackrel{f_1}{\longrightarrow}\Hom_{PD^\times}(\pi(\chi,\chi),\boldsymbol{1})\stackrel{f_2}{\longrightarrow}\Hom_{PD^\times}(M,\boldsymbol{1})\\
            &\stackrel{f_3}{\longrightarrow} \Ext_{PD^\times}^1(\chi\circ\det,\boldsymbol{1})\stackrel{f_4}{\longrightarrow}\Ext_{PD^\times}^1(\pi(\chi,\chi),\boldsymbol{1})\tag{4-6}
        \end{align*}
        where $f_i$ denotes the corresponding map.
In this case,  $$\dim\Hom_{PD^\times}(\chi\circ\det,\boldsymbol{1})= \dim \Hom_{PD^\times}(\pi(\chi,\chi),\boldsymbol{1})=1$$ and $f_1$ is injective. Therefore $f_1$ is an isomorphism. By the exactness of \eqref{f42}, $f_2$ is a zero map and $f_3$ is injective. According to the dimension formula of the exact sequence,
\begin{align*} \label{f43}
    \mathrm{dim}\ \Ext_{PD^\times}^1(\chi\circ\det,\boldsymbol{1})=\mathrm{dim\ Ker}f_4+\mathrm{dim\ Im}f_4=\mathrm{dim}\ \Hom_{PD^\times}(M,\boldsymbol{1})+ \mathrm{dim\ Im}f_4\tag{4-7}
\end{align*}
For the dimension of $\Ext_{PD^\times}^1(\chi\circ\det,\boldsymbol{1})$, by the exact sequence \eqref{f2} , \begin{align*}
    \Ext_{PD^\times}^1(\chi\circ\det,\boldsymbol{1})=\mathrm{H}^1(PD^\times,\chi\circ\det)=\mathrm{H}^1(D_{2(q+1)},\boldsymbol{1})
\end{align*}
 since $\ell\neq p$ and $\chi|_{F^\times}=\boldsymbol{1}$. Then the dimension is 2. For $\Ext^1_{PD^\times}(\pi(\chi,\chi),\boldsymbol{1})$, Lemma \ref{l4.2} shows that $\dim\ \Ext^1_{PD^\times}(\pi(\chi,\chi),\boldsymbol{1})=1$ since $\ell=2$ and $\ell|\ (q+1)$.

Therefore, we just need to determine the dimension of $\mathrm{Im}$ $f_4$.

\begin{lem}\label{l4.3}
Let $\pi=\pi(\chi,\chi)$.
   The map $f_4:\Ext^1_{PD^\times}(\chi\circ\det,\boldsymbol{1})\rightarrow \Ext_{PD^\times}^1(\pi,\boldsymbol{1})$ is surjective. In other words, $\mathrm{dim\ Im}\ f_4=1$.
\end{lem}
\begin{proof}
Recall that $U_1$ is the pro-$p$ subgroup of $PD^\times$ such that $PD^\times/U_1\cong D_{2(q+1)}$.
Note that $$\mathrm{H}^1(D_{2(q+1)},(\pi^{U_1})^\vee)\cong\begin{cases}
        \mathrm{H}^1(C_{q+1},\boldsymbol{1}),&   \mathrm{if}\ E/F\ \mathrm{is\ unramified};\\
        \mathrm{H}^1(C_2,\boldsymbol{1}),&   \mathrm{if}\ E/F\ \mathrm{is\ ramified}.\\
    \end{cases}$$
We have $\Ext_{PD^\times}^1(\chi\circ\det,\boldsymbol{1})\cong\mathrm{H}^1(D_{2(q+1)},\boldsymbol{1})$ and $\Ext_{PD^\times}^1(\pi,\boldsymbol{1})\cong \mathrm{H}^1(D_{2(q+1)},(\pi^{U_1})^\vee)$. Then  
\begin{align*}
    f_4: \mathrm{H}^1(D_{2(q+1)},\boldsymbol{1})\longrightarrow\mathrm{H}^1(D_{2(q+1)},(\pi^{U_1})^\vee)
\end{align*}
is the restriction map by \cite[Example 1.27(b)]{milneCFT}.

    If $E/F$ is unramified, taking $\psi\in\Hom(C_{q+1},\overline{\boldsymbol{F}_2})=\mathrm{H}^1(C_{q+1},\boldsymbol{1})$, we can define $\varphi\in\Hom(D_{2(q+1)},\overline{\boldsymbol{F}_2})=\Ext_{PD^\times}^1(\chi\circ\det,\boldsymbol{1})$ as follows:
    \begin{align*}
        \varphi:D_{2(q+1)}\longrightarrow\overline{\boldsymbol{F}_2},\quad \varphi(r)=\psi(r)\ \mathrm{and}\ \varphi(s)=0
    \end{align*}
    where $r$ and $s$ are generators of the dihedral group $D_{2(q+1)}$ satisfying $r^{q+1}=1$ and $s^2=1$.

    If $E/F$ is ramified, taking $\psi\in\Hom(C_{2},\overline{\boldsymbol{F}_2})=\mathrm{H}^1(C_2,\boldsymbol{1})$, we can define $\varphi\in\Hom(C_2\times C_2,\overline{\boldsymbol{F}_2})=\Ext_{PD^\times}^1(\chi\circ\det,\boldsymbol{1})$ as follows:
    \begin{align*}
        \varphi:C_2\times C_2\longrightarrow\overline{\boldsymbol{F}_2},\quad \varphi(a,b)=\psi(a),\forall a,b\in C_2.
    \end{align*}
    Therefore $f_4$ is a surjective map and $\dim \mathrm{Im} f_4=1$.
\end{proof}
Let us back to the proof of Theorem \ref{thm41}.
Followed by the equality \eqref{f43} and Lemma \ref{l4.3},  $\dim\Hom_{PD^\times}(M,1)$=1. Taking the functor $\Hom_{PD^\times}(-,\boldsymbol{1})$ on the exact sequence \eqref{f41}:
\begin{align*}\label{Mhom}
    0\longrightarrow \Hom_{PD^\times}(\mathrm{Sp}_\chi,\boldsymbol{1})\longrightarrow \Hom_{PD^\times}(M,\boldsymbol{1})\stackrel{f_5}{
    \longrightarrow
    }\Hom_{PD^\times}(\chi\circ\det,\boldsymbol{1})\tag{4-8}
\end{align*}
Since $\dim \Hom_{PD^\times}(M, \mathbf{1}) = \dim \Hom_{PD^\times}(\chi \circ \det, \mathbf{1}) = 1$, we only need to figure out whether $f_5$ is a zero map or an isomorphism.

\begin{lem}\label{f5zero}
The map $f_5$ is a zero map if and only if $E/F$ is ramified and $q\equiv3\pmod4$.
\end{lem}
\begin{proof}
    Since $U_1=(1+\mathfrak{p}_D)F^\times/F^\times$ is a normal open subgroup of $PD^\times$, for any smooth representation $V$, there exists an isomorphism 
    \begin{align*}
        \Hom_{PD^\times}(V,\boldsymbol{1})\simeq\Hom_{PD^\times/U_1}(V^{U_1},\boldsymbol{1})
    \end{align*}
    where $PD^\times/U_1\simeq D_{2(q+1)}$. Then we can rewrite the exact sequence \eqref{Mhom}:
    \begin{align*}
    0\longrightarrow \Hom_{D_{2(q+1)}}(\mathrm{Sp}_\chi^{U_1},\boldsymbol{1})\longrightarrow \Hom_{D_{2(q+1)}}(M^{U_1},\boldsymbol{1})\stackrel{f_5}{
    \longrightarrow
    }\Hom_{D_{2(q+1)}}(\boldsymbol{1},\boldsymbol{1})\simeq\overline{\boldsymbol{F}_2}.
\end{align*}

To determine $M^{U_1}$, taking the $U_1-$fixed functor on the exact sequence \eqref{expi}, we have
\begin{align*}
    0\longrightarrow M^{U_1}\longrightarrow \pi^{U_1}\stackrel{\varepsilon}\longrightarrow\overline{\boldsymbol{F}_2}\longrightarrow0
\end{align*}
since  $\chi|_{F^\times}=\boldsymbol{1}$. Here $\varepsilon$ is the quotient map which will be defined explicitly later. 

Followed by the structure of $\pi^{U_1}$ in Lemma \ref{l4.2}, since $\chi\chi^\sigma=\boldsymbol{1}$, $\pi^{U_1}=(\Ind_{E^\times/F^\times}^{PD^\times}\boldsymbol{1})^{U_1}$. We just need to consider the \((E^\times/F^\times, U_1)\)-double cosets of \(PD^\times\). Denote $$ PD^\times/U_1 \simeq D_{2(q+1)} = \langle r, s \mid r^{q+1} = s^2 = 1, srs = r^{-1} \rangle.$$ When \(E/F\) is unramified, there are 2 representatives \(\{1, s\}\) in the double cosets and \(f \in \pi^{U_1}\) is uniquely determined by \(f(1), f(s)\); when \(E/F\) is ramified, there are \(q+1\) representatives \(\{1, r, \cdots, r^q\}\) and \(f \in \pi^{U_1}\) is uniquely determined by \(f(1), f(r), \cdots, f(r^q)\). 
In a short summary, $$\pi^{U_1}\simeq \begin{cases}
\overline{\boldsymbol{F}_2}^2,&\text{if } E/F \text{ is unramified,} \\ \overline{\boldsymbol{F}_2}^{q+1},&\text{if } E/F \text{ is ramified,}
\end{cases}$$ via the map
\[ f \longmapsto \begin{cases}
  (f(1), f(s)),&\text{if } E/F \text{ is unramified;} \\
 (f(1), f(r), \cdots, f(r^q)),&\text{if } E/F \text{ is ramified.}
\end{cases}\]

Now we consider the $PD^\times/U_1\simeq D_{2(q+1)}$-equivariant map $\varepsilon:\pi^{U_1}\longrightarrow\overline{\boldsymbol{F}_2}$. When $E/F$ is unramified, $s\cdot f(1)=f(s), s\cdot f(s)=f(1)$ and $r\in C_{q+1}$ acts trivially and so $\varepsilon(f)=f(1)+f(s)$. When $E/F$ is ramified, $r\cdot f(r^j)=f(r^{j+1\pmod {q+1}}),s\cdot f(r^j)=f(r^{-j\pmod{q+1}})$. Thus $\varepsilon(f)=f(1)+f(r)+\cdots+f(r^q)$. Since $M^{U_1}$ is the kernel of $\varepsilon$, fix a suitable basis of $\pi^{U_1}$. Then we have
\begin{align*}
    M^{U_1}\simeq\begin{cases}
        \{(a_0,a_1)\in\overline{\boldsymbol{F}_2}^2|\ a_0+a_1=0\},&  E/F\ \textrm{is\ unramified};\\
        \{(a_0,\cdots,a_q)\in\overline{\boldsymbol{F}_2}^{q+1}|\ a_0+\cdots+a_q=0\},&E/F\ \textrm{is\ ramified}.
    \end{cases}
\end{align*}

Followed by the exact sequence \eqref{f41}, denote by $\iota$ the embedding map $(\chi\circ\det)^{U_1}\hookrightarrow M^{U_1}$. Then for any $\lambda\in \Hom_{D_{2(q+1)}}(M^{U_1},\boldsymbol{1})$,
since $\chi\circ\det$ is trivial on $U_1$, we have
\begin{align*}
    f_5(\lambda)=\lambda\circ\iota=\begin{cases}
    \lambda(1,1),& E/F \mathrm{\ is\  unramified};\\
    \lambda(1,\cdots,1),& E/F\ \mathrm{is\  ramified}.
\end{cases}
\end{align*}

 When $E/F$ is unramified, dim $M^{U_1}=1$ and so $f_5(\lambda)=\lambda(1,1)=1$, not a zero map. 
 
 When $E/F$ is ramified, we define 
\begin{align*}
    \lambda(a_0,\cdots,a_q)=\displaystyle\sum_{i=0}^q ia_i.
\end{align*}
Then we will verify that it is $D_{2(q+1)}$-invariant. Here all the indices are taken modulo \( q+1 \).

For the action of $r$:
\begin{align*}
    \lambda(r\cdot(a_0,\cdots,a_q))=\displaystyle\sum_{i=0}^{q}ia_{i+1}=\displaystyle\sum_{i=0}^q(i-1)a_i=\displaystyle\sum_{i=0}^q ia_i-\displaystyle\sum_{i=0}^q a_i=\displaystyle\sum_{i=0}^q i a_i.
\end{align*}
The last equality holds since $(a_0,\cdots,a_q)\in M^{U_1}$.

For the action of $s$:
\begin{align*}
    \lambda(s\cdot(a_0,\cdots,a_q))=\displaystyle\sum_{i=0}^qia_{-i}=\displaystyle\sum_{i=0}^q(-i)a_i=\displaystyle\sum_{i=0}^qia_i.
\end{align*}
The last equality holds since $\ell=2$. Therefore, $\lambda$ is $D_{2(q+1)}$-invariant. Moreover, since $\Hom_{D_{2(q+1)}}(M^{U_1},\boldsymbol{1})$ is one-dimensional, it is spanned by $\lambda$. Then 
\begin{align*}
    \lambda(1,\cdots,1)=\displaystyle\sum_{i=0}^q i\equiv\begin{cases}
        1\pmod 2, &\mathrm{when}\  q\equiv1\pmod4;\\
        0\pmod 2, &\mathrm{when}\  q\equiv3\pmod4.
    \end{cases}
\end{align*}
In a short summary, $f_5$ is a zero map if and only if $E/F$ is ramified and $q\equiv3\pmod4$. 
\end{proof}
Followed by Lemma \ref{f5zero}, $\Sp_\chi$ is distinguished if and only if $f_5$ is a zero map. It is equivalent to the conditions that $\chi|_{F^\times}=\boldsymbol{1}$, $E/F$ is ramified and $q\equiv3\pmod4$. Moreover,
\begin{align*}
    \mathrm{dim}\ \Hom_{D^\times}(\Sp_\chi,\boldsymbol{1})=\begin{cases}
        1,\ E/F\ \mathrm{is\ ramified\ and}\ q\equiv3\pmod4;\\
        0,\ \mathrm{otherwise}.
    \end{cases}
\end{align*}
This finishes the proof.
\end{proof}

    \section{The Supercuspidal representations}
In this section, we will deal with the case for the supercuspidal representations.
    \begin{thm}\label{c3.1}
        Let $\pi$ be an irreducible supercuspidal representation of $\GL_2(E)$. Then $\pi$ is $D^\times$-distinguished if and only if $\pi$ is $\GL_2(F)$-distinguished. (Here $\ell$ is arbitary except $\ell=p$)
    \end{thm}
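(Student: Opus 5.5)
The plan is to reduce the $\ell$-modular statement to Prasad's characteristic-zero theorem \cite{prased1992} by lifting, as the introduction announces. Let $\pi$ be a supercuspidal $\overline{\boldsymbol F_\ell}$-representation of $\GL_2(E)$. By Vign\'eras \cite{vig1989,vig1998}, $\pi$ lifts: there is an integral supercuspidal $\overline{\mathbb Q_\ell}$-representation $\tilde\pi=\operatorname{c-Ind}_{\boldsymbol J}^{\GL_2(E)}\tilde\Lambda$ whose reduction is $\pi$. We may take $\tilde\pi$ compactly induced from an extended maximal simple type $(\boldsymbol J,\tilde\Lambda)$ lifting a type $(\boldsymbol J,\Lambda)$ of $\pi$. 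Both $\tilde\pi$ and $\pi$ are induced from the same open compact-mod-centre subgroup.

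\textbf{Step 1: from $D^\times$-distinction of $\pi$ to that of a lift.} By Mackey theory and Frobenius reciprocity,
\[
\Hom_{D^\times}(\pi,\boldsymbol 1)\cong\prod_{g\in \boldsymbol J\backslash \GL_2(E)/D^\times}\Hom_{\boldsymbol J^g\cap D^\times}(\Lambda^g,\boldsymbol 1),
\]
and similarly for $\tilde\pi$. Since $D^\times/F^\times$ is compact, each $\boldsymbol J^g\cap D^\times$ is compact modulo $F^\times$. After the central character is forced to be trivial on $F^\times$, we are computing invariants of a compact group on a finite-dimensional lattice. If $\Hom_{\boldsymbol J^g\cap D^\times}(\Lambda^g,\boldsymbol 1)\neq0$, we want some lift $\tilde\Lambda$, possibly twisted by an unramified-type character to fix the central character on $F^\times$, with $\Hom_{\boldsymbol J^g\cap D^\times}(\tilde\Lambda^g,\boldsymbol 1)\neq0$.

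The pro-$p$ part $J^1\cap D^\times$ is harmless, because taking invariants is exact by Corollary \ref{copro-p}. The remaining part is a finite group, a quotient of $(\boldsymbol J^g\cap D^\times)/(J^{1,g}\cap D^\times)$, acting through a cuspidal representation of a finite $\GL_1$ or $\GL_2$ over a residue field. This step is the main obstacle: reduction modulo $\ell$ of invariants is not exact in general when $\ell$ divides the order of that finite group. I would first try to show that the relevant space of invariants for the finite group has dimension equal in characteristic $0$ and $\ell$. The cuspidal representation of the finite group is projective, or it is induced from a character of an anisotropic torus for which the invariants are computed by characters. Failing that, I would argue with the quotient map $\tilde\pi^\vee\to\pi^\vee$ on $D^\times$-invariant vectors, using the finiteness of $\boldsymbol J\backslash\GL_2(E)/D^\times$ modulo centre.

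\textbf{Step 2: apply Prasad.} For the $\overline{\mathbb Q_\ell}\cong\mathbb C$-lift, Prasad's theorem says that a supercuspidal $\tilde\pi$ of $\GL_2(E)$ is $D^\times$-distinguished if and only if it is $\GL_2(F)$-distinguished. This is the dichotomy that $\tilde\pi$ is $\GL_2(F)$-distinguished and not $(\GL_2(F),\omega_{E/F})$-distinguished, with the $D^\times$ and $\GL_2(F)$ periods matching for supercuspidals.

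\textbf{Step 3: descend.} For the $\GL_2(F)$ side, use the known modular result \cite{cui2024,vincent2019} that a supercuspidal $\pi$ is $\GL_2(F)$-distinguished if and only if it has a $\GL_2(F)$-distinguished lift. The "if" direction is the reduction of a nonzero integral invariant form, and it needs no hypothesis on $\ell$. The same reduction argument gives the "if" direction for $D^\times$.

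Combining: if $\pi$ is $D^\times$-distinguished, Step 1 gives a $D^\times$-distinguished lift $\tilde\pi$. By Step 2, $\tilde\pi$ is $\GL_2(F)$-distinguished, and reduction shows $\pi$ is $\GL_2(F)$-distinguished. Conversely, if $\pi$ is $\GL_2(F)$-distinguished, take a $\GL_2(F)$-distinguished lift. Prasad's theorem makes it $D^\times$-distinguished, and reducing an integral $D^\times$-invariant form, normalized to be nonzero modulo $\ell$, makes $\pi$ $D^\times$-distinguished.
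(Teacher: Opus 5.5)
\textbf{Verdict: genuine gap.} Your architecture (lift, apply Prasad's Theorem~C, reduce) is the paper's, and Steps~2 and~3 are fine. The reduction step is \cite[Theorem 3.4]{KM20} in the paper; to make the invariant form integral, take it supported on a single double coset of the Mackey decomposition. The gap is Step~1, the only non-formal input: you never show that a $D^\times$-distinguished supercuspidal $\pi$ has a $D^\times$-distinguished supercuspidal $\overline{\mathbb Q_\ell}$-lift. The routes you sketch do not get there:
\begin{enumerate}
\item A supercuspidal representation of $\GL_2(\boldsymbol k)$ with $|\boldsymbol k|=q_0$ is not projective once $\ell\mid q_0^2-1$. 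It then has $(q_0^2-1)_\ell>1$ pairwise non-isomorphic cuspidal lifts (twist the regular character of $\mathbb F_{q_0^2}^\times$ by characters of $\ell$-power order), whereas an irreducible projective module lifts uniquely.
\item Cuspidal representations of $\GL_2(\boldsymbol k)$ are not induced from characters of the anisotropic torus: the dimension is $q_0-1$, the index is $q_0(q_0-1)$. Characteristic-zero character formulas do not control mod-$\ell$ invariants in any case.
\item Your fallback assumes $\boldsymbol J\backslash\GL_2(E)/D^\times$ is finite modulo the centre. This is false: $\boldsymbol J$ and $D^\times$ are both compact modulo the centre, so finitely many double cosets would make $\mathrm{PGL}_2(E)$ compact.
\item Most fundamentally, one fixed lift $\tilde\Lambda$ cannot work in general. $\Hom_{\boldsymbol J^g\cap D^\times}(\tilde\Lambda^g,\boldsymbol 1)$ can vanish while its mod-$\ell$ counterpart does not, so the lift must be allowed to vary.
\end{enumerate}

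The paper supplies this step by citing the general lifting theorem \cite[Theorem 3.4]{cui2024} for closed subgroups, with $H=D^\times$, and with $H=\GL_2(F)$ for the converse. The idea behind it is to let the lift vary inside a projective envelope.
\begin{enumerate}
\item Since $\varpi_F$ acts trivially on $\Lambda$, take the $\overline{\mathbb Z_\ell}$-projective envelope $P$ of $\Lambda$ as a smooth representation of the profinite group $\boldsymbol J/\varpi_F^{\mathbb Z}$.
\item Its restriction to each $\boldsymbol J^g\cap D^\times$ is still projective, so invariant forms commute with reduction. Pull the nonzero form on $\Lambda^g$ back along $P^g\otimes\overline{\boldsymbol F_\ell}\twoheadrightarrow\Lambda^g$. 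It lifts to a $\overline{\mathbb Z_\ell}$-valued invariant form on $P^g$ that is nonzero modulo $\ell$, hence nonzero on some irreducible summand $\tilde\Lambda_i$ of $P\otimes\overline{\mathbb Q_\ell}$.
\item Supercuspidality of $\pi$ guarantees that every such $\tilde\Lambda_i$ is an extended maximal simple type whose compact induction is a supercuspidal lift of $\pi$.
\item Mackey theory then makes that lift $D^\times$-distinguished, and Prasad's theorem applies.
\end{enumerate}
This is also where the argument, and the statement, break down for cuspidal non-supercuspidal representations (compare the remark after Theorem~\ref{thm:GL2}).
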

    \begin{proof}
    Followed by \cite[Theorem 3.4]{KM20}, for either $H=D^\times$ or $\GL_2(F)$, an integral supercuspidal $\overline{\mathbb Q_{\ell}}$-representation
$\widetilde\pi$ satisfies that if $\widetilde\pi$ is $H$-distinguished, then $r_\ell(\widetilde\pi)$ is $H-$distinguished where $r_\ell(\widetilde\pi)$ is the reduction modulo $\ell$ of $\widetilde\pi$.

Suppose $\pi$ is $D^\times$-distinguished. Since $D^\times$ is a closed subgroup of $\GL_2(E)$, \cite[Theorem 3.4]{cui2024} implies that if $\pi$ is $D^\times$-distinguished. Then there exists a $\overline{\mathbb{Q}_\ell}$-lift $\widetilde{\pi}$ of $\pi$ such that it is supercuspidal and distinguished by $D^\times$. Note that $\tilde{\pi}$ is $D^\times$-distinguished if and only if $\tilde{\pi}$ is $\GL_2(F)$-distinguished by \cite[Theorem C]{prased1992}. Then $\pi$ is $\GL_2(F)$-distinguished since $\pi$ is the reduction modulo $\ell$ of $\widetilde\pi$.

Conversely, suppose $\pi$ is $\GL_2(F)$-distinguished. Because $\GL_2(F)$ is also closed in $\GL_2(E)$, by a similar argument as before,  we have $\pi$ is $D^\times$-distinguished.
\end{proof}

    \begin{thm}\label{thm42}
        Let $\pi$ be a $\sigma$-selfdual supercuspidal representation of $\GL_2(E)$. Assume that $\ell\neq2$. Then the followings are equivalent:
        \begin{enumerate}
            \item $\pi$ is $\GL_2(F)$-distinguished;
            \item $\pi$ is not $\omega_{E/F}$-distinguished;
            \item Langlands parameter $\varphi_\pi$ is conjugate-orthogonal.
        \end{enumerate}
        In particular, a $\sigma$-selfdual supercuspidal representation is either $\GL_2(F)$-distinguished or $(\GL_2(F),\omega_{E/F})$-distinguished, but not both.
    \end{thm}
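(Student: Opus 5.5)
The introduction attributes this dichotomy to \cite{cui2024}, so the plan is to reduce it to results stated there and in \cite{vincent2019,vin2025}, with Prasad's characteristic-zero theorems as the engine. Fix a $\sigma$-selfdual supercuspidal $\overline{\boldsymbol F_\ell}$-representation $\pi$ of $\GL_2(E)$ with $\ell\neq2$. The first step is a lifting statement: every such $\pi$ admits a $\sigma$-selfdual supercuspidal $\overline{\mathbb Q_\ell}$-lift $\widetilde\pi$. Moreover, if $\pi$ is $(\GL_2(F),\delta)$-distinguished for $\delta\in\{\boldsymbol 1,\omega_{E/F}\}$, the lift can be chosen $(\GL_2(F),\delta)$-distinguished. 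This uses \cite[Theorem 3.4]{cui2024} together with the type-theoretic description of \cite{vincent2019}. For the converse direction, \cite[Theorem 3.4]{KM20} says that distinction of an integral supercuspidal lift passes to its reduction.

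The second step is the dichotomy in characteristic zero. By Prasad (and Hakim, Kable, Anandavardhanan--Kable--Tandon), a $\sigma$-selfdual supercuspidal $\widetilde\pi$ is either $\GL_2(F)$-distinguished or $(\GL_2(F),\omega_{E/F})$-distinguished, and never both. Which alternative occurs is decided by the Asai $L$-function, equivalently by whether $\phi_{\widetilde\pi}$ is conjugate-orthogonal or conjugate-symplectic.

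Transporting this to $\pi$ gives the implications as follows.
\begin{itemize}
\item[$(1)\Rightarrow(2)$] Suppose $\pi$ were both $\GL_2(F)$- and $\omega_{E/F}$-distinguished. The plan is to show that a single lift can witness both. Sécherre's criterion expresses distinction through the distinction character $\delta_0$ of a $\sigma$-selfdual type. This character is a property of the type, and it is preserved under lifting when $\ell\neq 2$, because the quadratic characters $\boldsymbol 1$ and $\omega$ remain distinct mod $\ell$. So the same $\widetilde\pi$ would be both $\GL_2(F)$- and $\omega_{E/F}$-distinguished, contradicting the characteristic-zero dichotomy.
\item[$(2)\Rightarrow(1)$] Here I would invoke \cite{vincent2019}: a $\sigma$-selfdual supercuspidal is always distinguished by some character among $\{\boldsymbol 1,\omega_{E/F}\}$. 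Granting this, the implication is immediate.
\item[$(1)\Leftrightarrow(3)$] The Langlands parameter of $\pi$ is the reduction of that of $\widetilde\pi$, and for $\ell\neq 2$ its conjugate-orthogonal or conjugate-symplectic sign agrees with the sign of $\phi_{\widetilde\pi}$. The equivalence then follows from the characteristic-zero statement.
\end{itemize}

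The main obstacle is the simultaneous-lift argument in $(1)\Rightarrow(2)$, together with the compatibility of signs under reduction. Both genuinely need $\ell\neq2$: when $\ell=2$, $\omega_{E/F}$ reduces to the trivial character and the two notions of distinction coincide. I expect to rely on the explicit statements in \cite{cui2024,vin2025} rather than reprove them.
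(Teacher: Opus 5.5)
Your treatment of $(1)\Leftrightarrow(3)$ is sound given your first step. The step you flag yourself, the simultaneous lift in $(1)\Rightarrow(2)$, is not established by what you write.

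\begin{itemize}
\item That $\boldsymbol 1$ and $\omega$ stay distinct mod $\ell$ only says that a quadratic distinction character is determined by its reduction. It does not show that the distinction character of a lifted type reduces to that of $\pi$. Nor does it show that a single lift controls both kinds of distinction.
\item The $\delta_0$-criterion, as you invoke it and as it is used in Theorem \ref{sccriterion}, concerns distinction by the trivial character. But $\omega_{E/F}$-distinction of $\pi$ is distinction of the twist $\pi\otimes\mu^{-1}$, where $\mu$ is a character of $E^\times$ with $\mu|_{F^\times}=\omega_{E/F}$. That twist has its own type and its own $\delta_0$. You would additionally need that twisting a lifted type of $\pi$ gives a lifted type of $\pi\otimes\mu^{-1}$ with compatible distinction character.
\item This type-theoretic input is only used for odd $p$ (hence the hypothesis $p\neq2$ in Theorem \ref{sccriterion}(2)), while the statement puts no condition on $p$. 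The same remark applies to your $(2)\Rightarrow(1)$, which you take from \cite{vincent2019}.
\end{itemize}

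You do not need a simultaneous lift: the sign argument you already use for (3) gives the disjunction.
\begin{itemize}
\item Since $\phi_\pi$ is irreducible and conjugate-selfdual, its sign $\epsilon(\phi_\pi)\in\{\pm1\}$ is intrinsic to $\pi$. Reducing a suitably scaled invariant form on a stable lattice shows $\epsilon(\phi_{\widetilde\pi})=\epsilon(\phi_\pi)$ for \emph{every} $\sigma$-selfdual lift $\widetilde\pi$; this is where $\ell\neq2$ enters.
\item If $\pi$ is distinguished, \cite[Theorem 3.4]{cui2024} produces a distinguished lift, so $\epsilon(\phi_\pi)=+1$.
\item If $\pi$ is $\omega_{E/F}$-distinguished, then $\pi\otimes\mu^{-1}$ is $\sigma$-selfdual (because $\mu\mu^\sigma=\boldsymbol 1$) and distinguished. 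Choose a lift $\widetilde\mu$ of $\mu$ with $\widetilde\mu|_{F^\times}=\omega_{E/F}$; such a lift always exists. Twisting a distinguished lift of $\pi\otimes\mu^{-1}$ by $\widetilde\mu$ gives an $\omega_{E/F}$-distinguished lift of $\pi$, so $\epsilon(\phi_\pi)=-1$.
\end{itemize}
Hence the two kinds of distinction cannot occur together, and $(1)\Rightarrow(3)$ follows. The converse directions follow, as you say, by reducing a $\sigma$-selfdual lift using the characteristic-zero dichotomy and \cite[Theorem 3.4]{KM20}, after twisting by $\widetilde\mu^{-1}$ in the $\omega_{E/F}$ case.

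The paper itself gives no argument here. Its proof is the citation of \cite[Theorem 3.14 and Proposition 3.15]{cui2024}, which is what your closing sentence falls back on anyway.
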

    \begin{proof}
        This is a special case of \cite[Theorem 3.14 and Proposition 3.15]{cui2024}.
    \end{proof}
       
Theorem \ref{c3.1} reduces the $D^\times$-distinction problem for supercuspidal representations to the $\GL_2(F)$-distinction problem.  Theorem \ref{thm42} tells us that for $\sigma$-selfdual supercuspidal representations it suffices to decide between distinction and $\omega_{E/F}$-distinction. A complete local criterion is provided by the type‑theoretic construction of S\'echerre in \cite{vincent2019}. We now recall the relevant invariants and the resulting classification, which becomes particularly simple in the $\GL_2$ case.\\

Let $\pi$ be a $\sigma$-selfdual supercuspidal representation of
$\mathrm{GL}_2(E)$. Let $m\in\{1,2\}$ be its relative degree, and let
$T/T_0$ be the quadratic extension attached to $\pi$ as in
\cite[Proposition 5.14]{vincent2019}.
Following \cite[Definition 10.4]{vincent2019}, let
$(\mathcal D/\mathcal D_0,\delta_0)$ be the quadratic extension
and character attached to $\pi$.
Here we use $\mathcal D$ to distinguish this field from the
quaternion division algebra $D$.

The character $\delta_0$ is a quadratic character: either the trivial character or, when $\ell \neq 2$,
the local class field theory character $\omega_{\mathcal D/\mathcal D_0}$ associated to the
quadratic extension $\mathcal D/\mathcal D_0$.
By \cite[Proposition 10.5]{vincent2019}, the pair
$(\mathcal D/\mathcal D_0,\delta_0)$ is uniquely determined
by $\pi$ up to $F$-isomorphism.
We call $\delta_0$ the distinction character of $\pi$.

\begin{thm}\label{sccriterion}
Let $\pi$ be a $\sigma$-selfdual supercuspidal representation of $\GL_2(E)$,
with relative degree $m \in \{1,2\}$ and distinction character $\delta_0$.
\begin{enumerate}
\item If $\ell = 2$, then $\pi$ is always $D^\times$-distinguished.
\item If $\ell \neq 2$ and $p\neq2$, then $\pi$ is $D^\times$-distinguished if and only if 
\begin{align*}
    \begin{cases}
    \delta_0\ \mathrm{is\ the\ trivial\ character}, &\mbox{when}\ m=1;\\\
    \delta_0=\omega_{\mathcal{D}/\mathcal{D}_0},& \mbox{when}\ m=2.
\end{cases}
\end{align*}
\end{enumerate}
\end{thm}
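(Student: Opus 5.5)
The plan is to reduce everything to known $\GL_2(F)$-distinction criteria via Theorem \ref{c3.1}. By that theorem, $\pi$ is $D^\times$-distinguished if and only if it is $\GL_2(F)$-distinguished. So it suffices to determine, in terms of the type-theoretic invariants $(m,\delta_0)$, when a $\sigma$-selfdual supercuspidal $\pi$ is $\GL_2(F)$-distinguished. For this I will invoke Sécherre's criterion \cite{vincent2019}. A $\sigma$-selfdual supercuspidal representation of $\GL_n(E)$ is distinguished by $\GL_n(F)$ if and only if the $\sigma$-selfdual type $(\boldsymbol{J},\boldsymbol{\Lambda})$ attached to it, as in \cite[Proposition 5.14, Definition 10.4]{vincent2019}, can be chosen with $\boldsymbol{\Lambda}$ distinguished by $\boldsymbol{J}^\sigma$. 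This condition is expressed through the character $\delta_0$ and the parity of $m$: roughly, $\pi$ is distinguished if and only if $\delta_0$ equals $\omega_{\mathcal D/\mathcal D_0}^{m+1}$.

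\textbf{Case $\ell=2$.} Here $\delta_0$ is forced to be trivial, since the only quadratic character over $\overline{\boldsymbol F_2}$ is trivial and $\omega_{\mathcal D/\mathcal D_0}$ collapses to $\boldsymbol 1$. I will cite Sécherre's result that for $\ell=2$ every $\sigma$-selfdual supercuspidal representation is $\GL_n(F)$-distinguished (\cite{vincent2019}, or equivalently the $\ell=2$ statement in \cite{cui2024}). Combined with Theorem \ref{c3.1}, this gives (1).

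\textbf{Case $\ell\neq2$, $p\neq 2$.} I will specialise Sécherre's distinction criterion (\cite[Theorem 10.8]{vincent2019} and its surrounding results) to $n=2$. In that case the relative degree $m$, i.e.\ the degree of the tame parameter field extension over $T$, is $1$ or $2$. The criterion should read as follows: $\pi$ is distinguished if and only if $\delta_0$ equals $\omega_{\mathcal D/\mathcal D_0}^{m+1}$. This gives $\delta_0=\boldsymbol 1$ when $m=1$ and $\delta_0=\omega_{\mathcal D/\mathcal D_0}$ when $m=2$. The assumption $p\neq 2$ is needed because Sécherre's explicit description of $\delta_0$ requires tame ramification conditions, i.e.\ the existence of $\sigma$-selfdual simple strata with $p$ odd. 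Theorem \ref{thm42} serves as a consistency check. Since $\pi$ is exactly one of distinguished or $\omega_{E/F}$-distinguished, the two alternatives for $\delta_0$ in each $m$ correspond precisely to the dichotomy.

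\textbf{Main obstacle.} I expect the main difficulty to be matching the exact form and sign convention of Sécherre's criterion (whether it reads $\delta_0=\omega^{m+1}$ or $\omega^m$) in the $\GL_2$ specialisation. This requires tracking how $m$ enters through the parity of the residual extension degree of the maximal unramified piece. To resolve it, I would first test the unramified level-zero case directly. Take $m=1$ with $\pi$ depth zero, induced from a cuspidal representation of $\GL_2(\boldsymbol k_E)$. Then $\GL_2(F)$-distinction reduces to $\GL_2(\boldsymbol k_F)$-distinction of a Deligne–Lusztig cuspidal representation, which is known to hold exactly when the finite analogue of $\delta_0$ is trivial. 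I would then use this test to calibrate the exponent.
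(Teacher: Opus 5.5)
Your overall strategy matches the paper's. You reduce to $\GL_2(F)$-distinction by Theorem \ref{c3.1} and then read off the answer from S\'echerre's criterion. For $\ell=2$ this is exactly \cite[Theorem 10.8]{vincent2019}, and $\ell=2$ forces $p$ odd, so S\'echerre's standing hypothesis holds.

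The gap is in part (2). The content of the statement is the identity $\delta_0=\omega_{\mathcal D/\mathcal D_0}^{m+1}$ for $m\in\{1,2\}$. You only say the criterion ``should read'' this way, and you leave open whether the exponent is $m$ or $m+1$; nothing in the proposal settles it. The paper settles it by deciding which case of \cite[Theorem 10.9]{vincent2019} applies for each $m$. That theorem is the $\ell\neq 2$ criterion; Theorem 10.8 is the $\ell=2$ one. The choice of case is governed by the ramification of $T/T_0$ and $\mathcal D/\mathcal D_0$:
\begin{itemize}
\item If $m=2$, then $T/T_0$ must be ramified by \cite[Proposition 9.8]{vincent2019}, since an unramified $T/T_0$ forces $m$ odd. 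Hence $\mathcal D/\mathcal D_0$ is unramified by \cite[Remark 10.10]{vincent2019}, and \cite[Theorem 10.9(2)]{vincent2019} gives distinction if and only if $\delta_0=\omega_{\mathcal D/\mathcal D_0}$.
\item If $m=1$, then $T/T_0$ and $\mathcal D/\mathcal D_0$ are simultaneously ramified or unramified, and \cite[Theorem 10.9(3)]{vincent2019} gives distinction if and only if $\delta_0$ is trivial.
\end{itemize}
To have a proof you need this case analysis, or a precise reference for a uniform statement of that form.

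Your proposed calibration would not resolve the exponent.
\begin{itemize}
\item A depth-zero supercuspidal representation of $\GL_2(E)$ induced from a cuspidal representation of $\GL_2(\boldsymbol k_E)$ has relative degree $m=2$, not $m=1$. The relative degree is the rank of the residual group $\GL_m$ of the type, which here is $\GL_2(\boldsymbol k_E)$. Equivalently, $m$ is $2$ divided by the degree over $E$ of the parameter field. Your description of $m$ as ``the degree of the tame parameter field extension over $T$'' is not this invariant.
\item At depth zero one has $T/T_0=E/F$. So when $E/F$ is unramified, \cite[Proposition 9.8]{vincent2019} shows there are no $\sigma$-selfdual depth-zero supercuspidals at all.
\item Your test case ($m=1$, depth zero, $E/F$ unramified, reducing to $\GL_2(\boldsymbol k_F)$-distinction) is therefore both internally inconsistent and empty.
\item The depth-zero representations that actually occur have $m=2$ and $E/F$ ramified. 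There $\sigma$ acts trivially on $\boldsymbol k_E$, and the residual problem is not $\GL_2(\boldsymbol k_F)$-distinction.
\end{itemize}
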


\begin{proof}
By Theorem \ref{c3.1}, $\pi$ is $D^\times$-distinguished if and only if $\pi$ is $\GL_2(F)$-distinguished.
\begin{enumerate}
    \item 
The case $\ell=2$ is \cite[Theorem 10.8]{vincent2019}.  
\item 
Assume $\ell\neq2$.
Suppose $m=1$, $T/T_0$ and $\mathcal D/\mathcal D_0$ must be ramified or unramified at the same time. Followed by \cite[Theorem 10.9,(3)]{vincent2019}, $\pi$ is distinguished if and only if $\delta_0$ is trivial. 

Now suppose $m=2$. Followed by \cite[Proposition 9.8]{vincent2019}, $T/T_0$ is ramified; otherwise $m$ will be odd. Moreover, \cite[Remark 10.10]{vincent2019} implies $\mathcal{D}/\mathcal{D}_0$ is unramified. Then \cite[Theorem 10.9,(2)]{vincent2019} shows that $\pi$ is distinguished if and only if $\delta_0=\omega_{\mathcal D/\mathcal D_0}$.
\end{enumerate}
This finishes the proof.
\end{proof}

\section{$\GL_2(F)$-distinguished representations}
In this section, we deal with the cases remained in the work of \cite{cui2024}, namely the classification of $\mathrm{GL}_2(F)$-distinguished principal series representations of $\mathrm{GL}_2(E)$ when $\ell = 2$. 

Assume $\ell=2$ throughout this section. Then $\ell\neq p$, $q_F\equiv1\pmod{\ell}$ and $q_E\equiv1\pmod{\ell}$. Referring to \cite{vig1989}, $\pi(\chi_1,\chi_2)$ is reducible if and only if $\chi_1=\chi_2$. Moreover, $\pi(\chi,\chi)$ has length 3 and the irreducible subquotients are $\chi$, $\chi$ and $\mathrm{Sp}_\chi$, where $\mathrm{Sp}_\chi$ is an infinite-dimensional representation.
The double coset decomposition of $\GL_2(E)$ is
\[
\mathrm{GL}_2(E) = B_2(E)\mathrm{GL}_2(F) \sqcup B_2(E)\eta\,\mathrm{GL}_2(F),
\]
where $\eta = \begin{pmatrix} 1 & \delta \\ 1 & -\delta \end{pmatrix}$ and $E = F[\delta]$. By the geometric lemma, one has
\[
0 \to \mathrm{c-Ind}_{E^\times}^{\GL_2(F)} \chi_1 \chi_2^\sigma \to \pi \to \Ind_{B_2(F)}^{\GL_2(F)} \delta_{B_2(E)}^{1/2} \delta_{B_2(F)}^{-1/2} (\chi_1 \boxtimes \chi_2) \to 0
\]
with \( \delta_{B_2(E)}^{1/2} = \delta_{B_2(F)} \). Applying the functor \( \Hom_{\GL_2(F)}(-, \mathbf{1}_2) \), we obtain the long exact sequence
\[\label{f51}
0 \to \Hom_{F^\times \times F^\times} (\chi_1 \boxtimes \chi_2, \mathbf{1}) \to \Hom_{\GL_2(F)}(\pi, \mathbf{1}_2) \to \Hom_{E^\times} (\chi_1 \chi_2^\sigma, \mathbf{1}) \to \Ext_{F^\times \times F^\times}^{1} (\chi_1 \boxtimes \chi_2, \mathbf{1}).\tag{6-1}
\]

\begin{prop}\label{pil=2}Assume $\ell=2$. Then the followings are equivalent:
\begin{enumerate}
    \item $\pi=\pi(\chi_1,\chi_2)$ is $\GL_2(F)$-distinguished;
    \item  $\chi_1\chi_2^\sigma=\boldsymbol{1}$ or $\chi_1|_{F^\times}=\chi_2|_{F^\times}=\boldsymbol{1}$ with $\chi_1\neq\chi_2$.
\end{enumerate}
\end{prop}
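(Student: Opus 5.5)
The plan is to read everything off the long exact sequence \eqref{f51}, using Lemma \ref{l51} for the Ext term. Write $A=\Hom_{F^\times\times F^\times}(\chi_1\boxtimes\chi_2,\boldsymbol{1})$, $B=\Hom_{\GL_2(F)}(\pi,\boldsymbol{1})$, $C=\Hom_{E^\times}(\chi_1\chi_2^\sigma,\boldsymbol{1})$ and $X=\Ext^1_{F^\times\times F^\times}(\chi_1\boxtimes\chi_2,\boldsymbol{1})$. Then $A\ne0$ if and only if $\chi_1|_{F^\times}=\chi_2|_{F^\times}=\boldsymbol{1}$, and $C\ne0$ if and only if $\chi_1\chi_2^\sigma=\boldsymbol{1}$. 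By Lemma \ref{l51}, after dualizing the characters (which does not change triviality), $X\neq 0$ exactly when $\chi_1|_{F^\times}=\chi_2|_{F^\times}=\boldsymbol{1}$. Here $\chi_i|_{F^\times}$ means the restriction to $F^\times$, i.e. the diagonal torus factors, with the modulus character trivial since $\ell=2$.

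For (2)$\Rightarrow$(1), I split into the two cases in the statement. If $\chi_1|_{F^\times}=\chi_2|_{F^\times}=\boldsymbol{1}$, then $A\neq0$ injects into $B$, so $\pi$ is distinguished. This case holds whether or not $\chi_1=\chi_2$; the hypothesis $\chi_1\neq\chi_2$ is only there so that $\pi$ is irreducible. If instead $\chi_1\chi_2^\sigma=\boldsymbol{1}$ but the restrictions are not both trivial, then $X=0$ by Lemma \ref{l51}. Hence $B\to C$ is surjective and $B\neq 0$.

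For (1)$\Rightarrow$(2), suppose $B\neq0$ and $\chi_1\chi_2^\sigma\neq\boldsymbol{1}$. Then $C=0$, so $A\cong B\neq0$, which forces $\chi_1|_{F^\times}=\chi_2|_{F^\times}=\boldsymbol{1}$. This lands in the second alternative when $\chi_1\neq\chi_2$. If $\chi_1=\chi_2=\chi$ with $\chi|_{F^\times}=\boldsymbol{1}$, then $\chi\chi^\sigma=\chi|_{F^\times}\circ\Nm_{E/F}=\boldsymbol{1}$, so the first alternative holds. This contradicts the assumption, so that case cannot occur.

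The argument is formal. The only step needing care is checking that the representation in the closed-orbit term of the geometric lemma is exactly $\chi_1\boxtimes\chi_2$ restricted to $F^\times\times F^\times$, with no twist. This is immediate because $\delta_{B_2(E)}^{1/2}\delta_{B_2(F)}^{-1/2}=\delta_{B_2(F)}^{1/2}$ is trivial modulo $2$: its values are powers of $q$, and $q\equiv1\pmod 2$. One must also confirm that the open-orbit stabilizer character is $\chi_1\chi_2^\sigma$, which is given in \eqref{f51}.
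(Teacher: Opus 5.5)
Your proposal is correct and follows the paper's proof. Both read the answer off the exact sequence \eqref{f51}, with Lemma \ref{l51} killing the $\Ext^1$ term whenever $\chi_1|_{F^\times},\chi_2|_{F^\times}$ are not both trivial. The only difference is organization: the paper splits by cases (both restrictions nontrivial, exactly one trivial, both trivial) rather than by the two implications, but the content is the same.
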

\begin{proof}
    Followed by the exact sequence \eqref{f51}:
    \begin{enumerate}
        \item If $\chi_1|_{F^\times}\neq 1$ and $\chi_2|_{F^\times}\neq 1$, by Lemma \ref{l51}, $\Ext_{F^\times\times F^\times}^1(\chi_1\boxtimes\chi_2,\boldsymbol{1})=0$, then $\Hom_{\GL_2(F)}(\pi,\boldsymbol{1})\cong\Hom_{E^\times}(\chi_1\chi_2^\sigma,\boldsymbol{1})$. Therefore, $\pi$ is $\GL_2(F)$-distinguished if and only if $\chi_1\chi_2^\sigma=\boldsymbol{1}$.
        \item If just one of the two characters $\chi_1|_{F^\times},\chi_2|_{F^\times}$ is $\boldsymbol{1}$, then $\Hom_{F^\times \times F^\times} (\chi_1 \boxtimes \chi_2, \mathbf{1})=\Hom_{E^\times} (\chi_1 \chi_2^\sigma, \mathbf{1})=0$, and so $\pi$ is not $\GL_2(F)$-distinguished.
        \item If $\chi_1|_{F^\times}=\chi_2|_{F^\times}=\boldsymbol{1}$, dim $\Hom_{F^\times \times F^\times} (\chi_1 \boxtimes \chi_2, \mathbf{1})=1$, and so $\Hom_{\GL_2(F)}(\pi,\boldsymbol{1})\neq 0$ i.e. $\pi$ is $\GL_2(F)$-distinguished.
    \end{enumerate}
    This finishes the proof.
\end{proof}
Having established the criterion for the principal series $\pi(\chi_1,\chi_2)$ to be $\GL_2(F)$-distinguished in Proposition \ref{pil=2}, we now turn to the remaining irreducible component that appears when the principal series is reducible, namely the special representation $\mathrm{Sp}_\chi$.
\begin{thm}\label{spl=2} Assume $\ell=2$. Then
    $\mathrm{Sp}_\chi$ is $\GL_2(F)$-distinguished if and only if $\chi|_{F^\times}=\boldsymbol{1}$.
\end{thm}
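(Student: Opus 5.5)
We use the two exact sequences \eqref{expi} and \eqref{f41} together with $\Hom_{\GL_2(F)}(-,\boldsymbol{1})$, in the same way as in the proof of Theorem \ref{thm41}. For the necessity direction, the central character of $\Sp_\chi$ is $\chi^2$. If $\Sp_\chi$ is distinguished, then $\chi^2|_{F^\times}=\boldsymbol{1}$. Since $\ell=2$, the group $\overline{\boldsymbol{F}_2}^\times$ has no elements of order $2$, so $\chi|_{F^\times}=\boldsymbol{1}$. This settles the ``only if'' part.

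For sufficiency, assume $\chi|_{F^\times}=\boldsymbol{1}$. Then $\chi\circ\det|_{\GL_2(F)}=\chi|_{F^\times}\circ\det=\boldsymbol{1}$. Applying $\Hom_{\GL_2(F)}(-,\boldsymbol{1})$ to \eqref{f41} gives
\[
0\to\Hom_{\GL_2(F)}(\Sp_\chi,\boldsymbol{1})\to\Hom_{\GL_2(F)}(M,\boldsymbol{1})\xrightarrow{f_5'}\Hom_{\GL_2(F)}(\boldsymbol{1},\boldsymbol{1}).
\]
It therefore suffices to show $\dim\Hom_{\GL_2(F)}(M,\boldsymbol{1})\ge 2$, since then $\ker f_5'\neq0$.

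To compute $\Hom_{\GL_2(F)}(M,\boldsymbol{1})$, apply the functor to \eqref{expi}:
\[
0\to\Hom(\boldsymbol{1},\boldsymbol{1})\to\Hom_{\GL_2(F)}(\pi(\chi,\chi),\boldsymbol{1})\to\Hom_{\GL_2(F)}(M,\boldsymbol{1})\to\Ext^1_{\GL_2(F)}(\boldsymbol{1},\boldsymbol{1})\xrightarrow{f_4'}\Ext^1_{\GL_2(F)}(\pi(\chi,\chi),\boldsymbol{1}).
\]
By the preceding lemma, $\dim\Ext^1_{\GL_2(F)}(\boldsymbol{1},\boldsymbol{1})=2$.

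Next we determine $\dim\Hom_{\GL_2(F)}(\pi(\chi,\chi),\boldsymbol{1})$ from \eqref{f51} with $\chi_1=\chi_2=\chi$. Both $\Hom_{F^\times\times F^\times}(\chi\boxtimes\chi,\boldsymbol{1})$ and $\Hom_{E^\times}(\chi\chi^\sigma,\boldsymbol{1})$ are one-dimensional, because $\chi\chi^\sigma=\chi|_{F^\times}\circ\Nm_{E/F}=\boldsymbol{1}$. The target $\Ext^1_{F^\times\times F^\times}$ has dimension $4$ by Lemma \ref{l51}, since $2\mid q-1$. So the dimension is $1$ or $2$, according to whether the connecting map $\Hom_{E^\times}(\boldsymbol{1},\boldsymbol{1})\to\Ext^1_{F^\times\times F^\times}(\boldsymbol{1},\boldsymbol{1})$ is injective or zero.

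Rather than compute the connecting map directly, we bound the image of $f_4'$. The term $\Ext^1_{\GL_2(F)}(\pi,\boldsymbol{1})$ also fits into a long exact sequence coming from the geometric-lemma filtration. Its pieces are $\Ext^1_{F^\times\times F^\times}(\boldsymbol{1},\boldsymbol{1})$ (via Frobenius reciprocity for the parabolic induction) and $\Ext^1_{E^\times}(\boldsymbol{1},\boldsymbol{1})$ (via Shapiro for c-Ind, using that $E^\times\backslash\GL_2(F)$ is compact modulo center).

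The key step is to identify $f_4'$ on $\Ext^1_{\GL_2(F)}(\boldsymbol{1},\boldsymbol{1})=\Hom(F^\times,\overline{\boldsymbol{F}_2})$, with classes of the form $\phi\circ\det$. First, $f_4'$ followed by the map to the open-orbit piece is restriction to $E^\times\subset\GL_2(F)$, that is $\phi\mapsto\phi\circ\Nm_{E/F}$. Second, $f_4'$ followed by the map to the closed-orbit piece is restriction to the torus, $\phi\mapsto(\phi,\phi)$. Write $\phi_{\mathrm{ur}}$ for the valuation homomorphism and $\phi_{\mathrm{ram}}$ for the tame one. The map $\phi\mapsto\phi\circ\Nm$ kills $\phi_{\mathrm{ram}}$ when $E/F$ is unramified, since norms of units are all units and $(q^2-1)/(q-1)=q+1$ is even. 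It kills $\phi_{\mathrm{ur}}$ when $E/F$ is ramified, since then $\phi_{\mathrm{ur}}\circ\Nm=2v_E=0$.

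We then combine these facts with the dimension count. The aim is the identity $\dim\Hom(M,\boldsymbol{1})=\dim\Hom(\pi,\boldsymbol{1})-1+\dim\ker f_4'$, and to show that this quantity is at least $2$ in every case. If $\dim\Hom(\pi,\boldsymbol{1})=1$, we must show $\ker f_4'\neq0$ and that the kernel contains a class not seen by $f_5'$. If $\dim\Hom(\pi,\boldsymbol{1})=2$, a nonzero element of $\ker f_4'$ already suffices.

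The main obstacle is controlling how the two orbit contributions interact in $\Ext^1_{\GL_2(F)}(\pi,\boldsymbol{1})$. The restriction to $E^\times$ can vanish on some $\phi$ while the torus restriction $(\phi,\phi)$ does not, and the long exact sequence may glue these pieces. As a fallback, we would instead construct explicitly a $\GL_2(F)$-invariant functional on $M$ vanishing on the line $\chi\circ\det$. Concretely, we would take a combination of the closed-orbit functional (integration over $B_2(F)\backslash\GL_2(F)$) and the open-orbit functional (integration over $E^\times\backslash\GL_2(F)$). Both restrict to the same nonzero multiple of evaluation on constants, and in characteristic $2$ their difference should give the required element of $\ker f_5'$. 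Verifying this is a volume computation modulo $2$.
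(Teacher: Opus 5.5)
Your ``only if'' argument is correct, and more elementary than the paper's. The paper gets both directions from \cite[Theorem 1.3]{vin2026} ($\Sp_\chi$ is cuspidal, hence $\GL_2(F)$-distinguished iff $\sigma$-selfdual) together with a central-character comparison. The ``if'' direction, however, is not proved. You reduce to $\dim\Hom_{\GL_2(F)}(M,\boldsymbol 1)\ge 2$ but never establish it, and you leave the gluing of the two orbit contributions open. The fallback is an unverified computation resting on a false premise (see below).

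Some intermediate steps are also wrong in ways that would derail the main line:
\begin{itemize}
\item You have the cases reversed. The kernel of $\phi\mapsto\phi\circ\Nm_{E/F}$ on $\Hom(F^\times,\overline{\boldsymbol F_2})$ is the line through $F^\times\to F^\times/\Nm_{E/F}(E^\times)\cong\mathbb Z/2\mathbb Z$. For $E/F$ unramified, $v_F\circ\Nm_{E/F}=2v_E$, so $\phi_{\mathrm{ur}}$ is killed, while $\phi_{\mathrm{ram}}\circ\Nm_{E/F}\neq 0$ because $\Nm_{E/F}:\boldsymbol k_E^\times\to\boldsymbol k_F^\times$ is surjective. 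For $E/F$ ramified, $v_F\circ\Nm_{E/F}=v_E$, so $\phi_{\mathrm{ur}}$ survives.
\item There is no map from $\Ext^1_{\GL_2(F)}(\pi,\boldsymbol 1)$ to the closed-orbit term. In the long exact sequence of the geometric-lemma filtration, the closed-orbit quotient contributes a map \emph{into} $\Ext^1_{\GL_2(F)}(\pi,\boldsymbol 1)$; only the open-orbit term receives a map out of it.
\item $E^\times\backslash\GL_2(F)$ is not compact, even modulo the center, since the center already lies in $E^\times$. Shapiro's lemma does not need compactness anyway.
\end{itemize}

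The missing idea is that no $\Ext$ analysis is needed. Since $q$ and $q_E$ are odd, all modulus characters are trivial in $\overline{\boldsymbol F_2}$. Since $\chi\circ\det$ is trivial on $\GL_2(F)$, we get $\pi(\chi,\chi)|_{\GL_2(F)}\cong C^\infty(\mathbb P^1(E))$. Under this identification the sub $\chi\circ\det$ of $M$ is the constants, and $M=\ker\lambda$, where $\lambda$ is the $\GL_2(E)$-invariant functional normalized to give every ball mass $1$.

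Let $\Lambda$ be restriction to $\mathbb P^1(F)$ followed by the $\GL_2(F)$-invariant functional $\lambda_F$ on $C^\infty(\mathbb P^1(F))=\Ind_{B_2(F)}^{\GL_2(F)}\boldsymbol 1$, again normalized so every ball has mass $1$.
\begin{itemize}
\item $\mathbb P^1(F)$ is a disjoint union of $q+1$ balls, so $\Lambda(\boldsymbol 1)=q+1=0$ in $\overline{\boldsymbol F_2}$. Hence $\Lambda|_M\in\ker f_5'$.
\item $\Lambda|_M\neq 0$. Take a small ball $U$ centred at a point of $\mathbb P^1(F)$ and a ball $U'$ in the open orbit. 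Then $1_U+1_{U'}\in M$ and $\Lambda(1_U+1_{U'})=1$.
\end{itemize}
So $\Lambda$ descends to a nonzero element of $\Hom_{\GL_2(F)}(\Sp_\chi,\boldsymbol 1)$.

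In particular, the closed-orbit functional is not a nonzero multiple of evaluation on constants, as your fallback assumes; it already vanishes there. The bound $\dim\Hom_{\GL_2(F)}(M,\boldsymbol 1)\ge 2$ is also stronger than what you need. With this step supplied, your proof would be self-contained and would avoid the appeal to \cite{vin2026} that the paper makes.
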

\begin{proof}
   Recall that $\mathrm{Sp}_\chi$ is a subquotient representation of $\pi(\chi,\chi)$ and that it is cuspidal. According to \cite[Theorem 1.3]{vin2026}, $\mathrm{Sp}_\chi$ is $\GL_2(F)$-distinguished if and only if $\mathrm{Sp}_\chi$ is $\sigma$-selfdual. 
   
   Note that
        $(\mathrm{Sp}_\chi)^\vee\simeq\mathrm{Sp}_{\chi^{-1}}$ and  $(\mathrm{Sp}_\chi)^\sigma\simeq\mathrm{Sp}_{\chi^\sigma}$. 
         When $\mathrm{Sp}_\chi$ is $\sigma$-selfdual, comparing the central characters, we have $(\chi^{-1})^2=(\chi^\sigma)^2$ then $\chi|_{F^\times}=\boldsymbol{1}$ since $\ell=2$. Conversely, when $\chi|_{F^\times}=\boldsymbol{1}$, $\Sp_\chi$ is $\sigma$-selfdual and so it is $\GL_2(F)$-distinguished.
\end{proof}

\bibliographystyle{amsalpha}
\bibliography{mira}
\enlargethispage{3\baselineskip}

\end{document}